\newtheorem{thm}{Theorem}[section]
\newtheorem{lem}[thm]{Lemma}
\newtheorem{assum}[thm]{Assumptions}
\DeclareMathOperator{\Var}{Var}
\newcommand{\defeq}{\stackrel{\triangle}{=}}
\newcommand{\R}{{\mathbb R}}        
\def\noprint#1{}
\def\E{\mathbb{E}}
\begin{document}
	\twocolumn[
	\icmltitle{A Progressive Batching L-BFGS Method for Machine Learning}
	\icmlsetsymbol{equal}{*}
	
	\begin{icmlauthorlist}
		\icmlauthor{Raghu Bollapragada}{nu}
		\icmlauthor{Dheevatsa Mudigere}{intel-india}
		\icmlauthor{Jorge Nocedal}{nu}
		\icmlauthor{Hao-Jun Michael Shi}{nu}
		\icmlauthor{Ping Tak Peter Tang}{intel-usa}
	\end{icmlauthorlist}
	
	\icmlaffiliation{nu}{Department of Industrial Engineering and Management Sciences, Northwestern University, Evanston, IL, USA}
	\icmlaffiliation{intel-india}{Intel Corporation, Bangalore, India}
	\icmlaffiliation{intel-usa}{Intel Corporation, Santa Clara, CA, USA}
	
	\icmlcorrespondingauthor{Raghu Bollapragada}{raghu.bollapragada@u.northwestern.edu}
	\icmlcorrespondingauthor{Dheevatsa Mudigere}{dheevatsa.mudigere@intel.com}
	\icmlcorrespondingauthor{Jorge Nocedal}{j-nocedal@northwestern.edu}
	\icmlcorrespondingauthor{Hao-Jun Michael Shi}{hjmshi@u.northwestern.edu}
	\icmlcorrespondingauthor{Ping Tak Peter Tang}{peter.tang@intel.com}
	
	\icmlkeywords{Deep Learning, Sample Selection, Nonconvex Optimization, L-BFGS, Distributed Training}
	
	\vskip 0.3in
	]
	
	\printAffiliationsAndNotice{}

\begin{abstract} 
	
The standard L-BFGS method relies on gradient approximations that are not dominated by noise, so that search directions are descent directions, the line search is reliable, and quasi-Newton updating yields useful quadratic models of the objective function. All of this appears to call for a full batch approach, but since small batch sizes give rise to faster algorithms with better generalization properties, L-BFGS is currently not considered an algorithm of choice for large-scale machine learning applications. One need not, however, choose between the two extremes represented by the full batch or highly stochastic regimes, and may instead follow a progressive batching approach in which the sample size increases during the course of the optimization. In this paper, we present a new version of the L-BFGS algorithm that combines three basic components --- progressive batching, a stochastic line search, and stable quasi-Newton updating ---  and that performs well on training logistic regression and deep neural networks. We provide supporting convergence theory for the method. 

\end{abstract}

\section{Introduction}

The L-BFGS method \cite{liu1989limited} has traditionally been regarded as a batch method in the 
machine learning community. This is because quasi-Newton algorithms need 
gradients of high quality in order to construct useful quadratic models and perform reliable line searches. 
These algorithmic ingredients can be implemented, it seems, only by using very large 
batch sizes, resulting in a costly iteration that makes the overall algorithm 
slow compared with stochastic gradient methods \cite{robbins1951stochastic}.


Even before the resurgence of  neural networks, many researchers observed that a well-tuned implementation of the 
stochastic gradient (SG) method was far more effective on large-scale logistic regression
applications than the batch L-BFGS method, even when taking into account the 
advantages of  parallelism offered by the use of large batches. The 
preeminence of the SG method (and its variants) became more pronounced with the advent of 
deep neural networks, and some researchers have speculated that SG is endowed with 
certain regularization properties that are essential in the minimization of 
such complex nonconvex functions \cite{hardt2015train,keskar2016large}.

In this paper, we postulate that the most efficient algorithms for machine 
learning may not reside entirely in the highly stochastic or full batch 
regimes, but should employ a progressive batching approach in which the sample size is
initially small, and is increased as the iteration progresses. This view is consistent with recent numerical experiments 
on training various deep neural networks \cite{smith2017don,goyal2017accurate}, where the SG method, with increasing sample sizes, yields similar test loss and 
accuracy as the standard (fixed mini-batch) SG method, while 
offering significantly greater opportunities for parallelism. 
 
Progressive batching algorithms have  received much attention recently from 
a theoretical perspective. It has been shown that they enjoy complexity 
bounds that rival those of the SG method \cite{byrd2012sample}, and that they can achieve a fast rate of convergence \cite{friedlander2012hybrid}.  The main appeal of these 
methods is that they inherit the efficient initial behavior of the SG method, offer greater opportunities to exploit parallelism,
and allow for the incorporation of second-order information. The latter can be done
efficiently via quasi-Newton updating. 

An integral part of quasi-Newton methods is the line search, which ensures that a convex quadratic model can be constructed at every iteration. One challenge that immediately arises is how to perform this line search  when  the objective function is stochastic. This is an issue that has not 
received sufficient attention in the literature, where stochastic line searches have been largely dismissed as inappropriate. 
In this paper, we take a step towards the development of stochastic line searches for machine learning by studying a key component, namely the initial estimate in the one-dimensional search. Our approach, which is based on statistical considerations, is designed for an Armijo-style backtracking line search.


\subsection{Literature Review}
Progressive batching (sometimes referred to as dynamic sampling)  has been well studied in the 
optimization literature, both for stochastic gradient and subsampled 
Newton-type methods
\cite{byrd2012sample,friedlander2012hybrid,cartis2015global,2014pasglyetal,roosta2016sub,roosta2016sub1,bollapragada2016exact,bollapragada2017adaptive,de2017automated}. 
Friedlander and Schmidt \yrcite{friedlander2012hybrid} introduced 
theoretical conditions under which a progressive batching SG method converges 
linearly for finite sum problems, and  experimented with a 
quasi-Newton adaptation of their algorithm. Byrd et al. \yrcite{byrd2012sample} proposed a
progressive batching strategy, based on a \emph{norm test}, that determines when to 
increase the sample size; they established linear convergence and computational complexity
bounds in the case when the batch size grows geometrically. More recently, Bollapragada et al. 
\yrcite{bollapragada2017adaptive} introduced a batch control mechanism based on an \emph{inner product} test
that improves upon the norm test mentioned above.

There has been a renewed interest in understanding the
generalization properties of small-batch and large-batch methods for training neural networks; 
see \cite{keskar2016large,dinh2017sharp,goyal2017accurate,hoffer2017train}. Keskar et al. \yrcite{keskar2016large} empirically observed that large-batch 
methods converge to solutions with inferior generalization properties; however, Goyal et al. \yrcite{goyal2017accurate} showed 
that large-batch methods can match the performance of small-batch methods when 
a warm-up strategy is used in conjunction with scaling the step length by the 
same factor as the batch size. Hoffer et al. \yrcite{hoffer2017train} and You et 
al. \yrcite{you2017scaling} also explored larger batch sizes and steplengths to 
reduce the number of updates necessary to train the network. All of these studies naturally led 
to an interest in progressive batching techniques. Smith et al. 
\yrcite{smith2017don} showed empirically that increasing the sample size and 
decaying the steplength are quantitatively equivalent for the SG method; 
hence, steplength schedules could be directly converted to batch size 
schedules. This approach was parallelized by Devarakonda et al. 
\yrcite{devarakonda2017adabatch}. De et al. \yrcite{de2017automated} presented numerical results with a progressive batching method that employs the norm test. Balles et al. \yrcite{balles2016coupling} proposed an adaptive dynamic sample size scheme and couples the sample size with the steplength.

Stochastic second-order methods have been explored within the context of 
convex and non-convex optimization; see
\cite{schraudolph2007stochastic,sohl2014fast,mokhtari2015global,berahas2016multi,byrd2016stochastic,keskar2016adaqn,curtis2016self,berahas2017robust,zhou2017stochastic}.
Schraudolph et al. 
\yrcite{schraudolph2007stochastic} ensured stability of quasi-Newton updating by computing
gradients using the same batch at the beginning and end of the iteration. Since this can
potentially double the cost of the iteration,  Berahas et al. \yrcite{berahas2016multi} proposed to achieve
gradient consistency by computing gradients based on the overlap between consecutive batches; this
 approach was further tested by Berahas and Takac \yrcite{berahas2017robust}. An interesting approach introduced by Martens and Grosse 
\yrcite{martens2015optimizing,grosse2016kronecker} approximates the Fisher information matrix to scale the
gradient; a distributed implementation of their K-FAC approach is described in \cite{ba2016distributed}. Another approach approximately computes the inverse Hessian by using the Neumann power series representation of matrices \cite{krishnan2017neumann}.

\subsection{Contributions}  

This paper builds upon three algorithmic components that have recently received  attention in the literature --- progressive batching, stable quasi-Newton updating, and  adaptive steplength selection. It advances their design and puts them together in a
novel algorithm with attractive theoretical and computational properties. 

The cornerstone of our progressive batching strategy is the mechanism proposed by Bollapragada et al. \yrcite{bollapragada2017adaptive} in the context of first-order methods. We extend their \emph{inner product} control test  to second-order algorithms, something that is delicate and leads to a significant modification of the original procedure. Another main contribution of the paper is the design of an Armijo-style backtracking line search where the initial steplength is chosen based on statistical information gathered during the course of the iteration. We show that this steplength procedure is effective on a wide range of applications, as it leads to well scaled steps and  allows for the BFGS update to be performed most of the time, even for nonconvex problems.  We also test two techniques for ensuring the stability of quasi-Newton updating, and observe that the overlapping procedure described by Berahas et al. \yrcite{berahas2016multi} is more efficient than a straightforward adaptation of classical quasi-Newton methods \cite{schraudolph2007stochastic}.
 
We report numerical tests on large-scale logistic regression and deep neural network training tasks that indicate that our method is robust and efficient, and has good generalization properties.  An additional advantage is that the method requires almost no parameter tuning, which is possible due to the incorporation of second-order information.   All of this suggests that our approach has the potential to become one of the leading optimization methods for training deep neural networks. In order to achieve this, the algorithm must be optimized for parallel execution, something that was only briefly explored in this study.

\section{A Progressive Batching Quasi-Newton Method}
\label{sec:progressive}

The problem of interest is
\begin{equation}\label{prob}
\min_{x \in \R^{d}}   F(x) = \int f(x; z,y) dP( z,y),
\end{equation}
where $f$ is the composition of a prediction function (parametrized by  $x $) 
and a loss function, and $(z,y)$ are random input-output pairs with 
probability distribution $P(z,y)$.  The associated empirical risk problem 
consists of minimizing
$$R(x) = \frac{1}{N} \sum_{i =1}^N f(x;z^i,y^i) \defeq \frac{1}{N} \sum_{i =1}^N F_i(x), $$
where we define 
$
F_i(x) = f(x;z^i,y^i).
$
A stochastic quasi-Newton method is given by 
\begin{equation} \label{iter}
x_{k+1} = x_k - \alpha_k H_k g_k^{S_k},
\end{equation}
where the batch  (or subsampled) gradient is given by
\begin{equation}   \label{batch}
g_k^{S_k}=\nabla F_{S_k}(x_k) \defeq \frac{1}{|S_k|}\sum_{i \in S_k}\nabla F_i(x_k),
\end{equation}
the set $S_k \subset \{1,2,\cdots\}$ indexes data points $(y^i, z^i)$ sampled from the distribution $P(z,y)$, and $H_k$ is a positive definite quasi-Newton matrix. 
We now discuss each of the components of the new method.

\subsection{Sample Size Selection}

The proposed algorithm has the form \eqref{iter}-\eqref{batch}. Initially, it utilizes a small batch size $|S_k|$, and increases it gradually in order to attain a fast local rate of convergence and permit the use of second-order information. A challenging question is to determine when, and by how much, to increase the batch size $|S_k|$ over the course of the optimization procedure based on observed gradients --- as opposed to using prescribed rules that depend on the iteration number $k$. 

We propose to build upon the strategy introduced by Bollapragada et al. \yrcite{bollapragada2017adaptive} in the context of first-order methods. Their \textit{inner product test} determines a sample size such that the search direction is a descent direction with high probability. A straightforward extension of this strategy to the quasi-Newton setting is not appropriate since requiring only that a stochastic quasi-Newton search direction be a descent direction with high probability would underutilize the curvature information contained in the search direction. 

We would like, instead, for the search direction $d_k = - H_k g_k^{S_k}$ to make an acute angle with the true quasi-Newton search direction $- H_k \nabla F(x_k)$, with high probability. Although this does not imply that $d_k$ is a descent direction for $F$, this will normally be the case for any reasonable quasi-Newton matrix. 

To derive the new inner product quasi-Newton (IPQN) test, we first observe that the stochastic quasi-Newton search direction makes an acute angle with the true quasi-Newton direction in expectation, i.e.,
\begin{equation} \label{IP-Expectation}
\E_k \left[(H_k \nabla F(x_k))^T (H_k g_k^{S_k}) \right] = \|H_k \nabla F(x_k)\|^2 ,
\end{equation}
where $\E_k$ denotes the conditional expectation at $x_k$.
We must, however, control the variance of this quantity to achieve our stated objective.
Specifically, we select the sample size $|S_k|$ such that the following condition is satisfied:
\begin{equation}\label{IPQN: exact}
\begin{aligned} 
&\E_k \left[\left((H_k \nabla F(x_k))^T (H_k g_k^{S_k}) - \|H_k \nabla F(x_k)\|^2\right)^2\right] \\
&~~~~~~~~~~~~~~\leq \theta^2   \|H_k \nabla F(x_k)\|^4,
\end{aligned}
\end{equation}
for some $\theta > 0$. The left hand side of \eqref{IPQN: exact} is difficult to compute but can be bounded by the true variance of individual search directions, i.e.,
\begin{equation}\label{IPgrad: exacttest}
\begin{aligned}
&\frac{\E_k \left[\left((H_k \nabla F(x_k))^T (H_k g_k^i) -  \|H_k \nabla F(x_k)\|^2\right)^2\right]}{|S_k|} \\
&~~~~~~~~~~~~~~\leq \theta^2  \|H_k \nabla F(x_k)\|^4,
\end{aligned} 
\end{equation}
where $g_k^i = \nabla F_i(x_k)$. This test involves the true expected gradient and variance, but we can approximate these quantities with sample gradient and variance estimates, respectively, yielding the practical inner product quasi-Newton test:
\begin{equation}\label{IPQN: test}
\frac{\Var_{i \in S_k^v}\left((g_k^i)^T H_k^2 g_k^{S_k}\right)}{|S_k|} \leq \theta^2  \left\|H_k  g_k^{S_k} \right\|^4 ,
\end{equation}
where $S_k^v \subseteq S_k$ is a subset of the current sample (batch), and the variance term  is defined as 
\begin{equation}\label{IPQN: variance}
\frac{\sum_{i \in S_k^v} \left((g_k^i)^T H_k^2 g_k^{S_k} - \left\|H_k g_k^{S_k} \right\|^2\right)^2}{|S_k^v| - 1}.
\end{equation}

The variance  \eqref{IPQN: variance} may be computed using just one additional Hessian vector product of $H_k$ with $H_k g_k^{S_k}$. Whenever condition \eqref{IPQN: test} is not satisfied, we increase the sample size $|S_k|$. In order to estimate the increase that would lead to a satisfaction of \eqref{IPQN: test}, we reason as follows.   If we assume that new sample $|\bar{S}_k|$ is such that
\begin{equation*}
\left\|H_k g_k^{S_k}\right\| \approxeq \left\|H_k g_k^{\bar{S}_k}\right\|,
\end{equation*}
and similarly for the variance estimate, then a simple computation shows that a lower bound on the new sample size is
\begin{equation}\label{IPQN: batch size}
|\bar{S}_k| \geq \frac{\Var_{i \in S_k^v}\left((g_k^i)^T H_k^2 g_k^{S_k}\right)}{\theta^2 \left\|H_k g_k^{S_k}\right\|^4} \defeq b_k.
\end{equation}
In our implementation of the algorithm, we set the new sample size as $|S_{k+1}| = \lceil b_k \rceil$.
When the sample approximation of $F(x_k)$ is not accurate, which can occur when $|S_k|$ is  small, the progressive batching mechanism just described may not be reliable. In this case we employ the moving window technique described in Section  4.2 of Bollapragada et al. \yrcite{bollapragada2017adaptive}, to produce a sample estimate of $\nabla F(x_k)$.

\subsection{The Line Search}
In deterministic optimization, line searches are employed to ensure that the step is not too short and to guarantee sufficient decrease in the objective function.
%
 Line searches are particularly important in quasi-Newton methods since they  ensure robustness and efficiency of the iteration with little additional  cost. 

In contrast, stochastic line searches are poorly understood and rarely employed in practice because they must make decisions based on sample function values 
\begin{equation}
F_{S_k}(x) = \frac{1}{|S_k|} \sum_{i \in S_k} F_i(x),
\end{equation} 
which are noisy approximations to the true objective $F$. One of the key questions in the design of a stochastic line search is how to ensure, with high probability, that there is a decrease in the true function when one can only observe stochastic approximations $F_{S_k}(x)$. We address this question by proposing a formula for the step size $\alpha_k$ that controls possible increases in the true function.  Specifically, the first trial steplength in the stochastic backtracking line search is computed  so that the predicted decrease in the expected function value is sufficiently large, as we now explain. 

Using  Lipschitz continuity of $\nabla F(x)$ and taking  conditional expectation, we can show the following inequality
\begin{equation}
\E_k \left[ F_{k+1} \right] \leq F_k - \alpha_k \nabla F(x_k)^T H_k^{1/2} W_k H_k^{1/2} \nabla F(x_k) \label{sequoia}
\end{equation}
where 
$$W_k = \left(I - \frac{L\alpha_k}{2}\left(1 +  \frac{{\rm Var}\{H_k g_k^{i}\}}{|S_k| \|H_k \nabla F(x_k)\|^2}\right) H_k \right),$$
$\Var\{H_k g_k^{i} \} = \E_k\left[\|H_k g_k^{i} - H_k \nabla F(x_k)\|^2\right]$, $F_k= F(x_k)$,  and $L$ is the Lipschitz constant. The proof of \eqref{sequoia} is given in the supplement.

The only difference in \eqref{sequoia} between the deterministic and stochastic quasi-Newton methods is the additional variance term in the matrix $W_k$. To obtain decrease in the function value in the deterministic case, the matrix $\left(I - \frac{L\alpha_k}{2}H_k\right)$ must be positive definite, whereas in the stochastic case the matrix $W_k$ must be positive definite to yield a decrease in $F$ \emph{in expectation}. 
In the deterministic case, for a reasonably good quasi-Newton matrix $H_k$, one expects that  $\alpha_k=1$  will result in a decrease in the function, and therefore the initial trial steplength parameter should be chosen to be 1.
In the stochastic case, the initial trial value
\begin{equation}\label{initialstep}
\hat \alpha_k = \left( 1 + \frac{\Var\{H_k g_k^{i}\}}{|S_k| \|H_k \nabla F(x_k)\|^2} \right)^{-1} 
\end{equation}
will result in decrease in the expected function value. 
However, since formula \eqref{initialstep} involves the expensive computation of the individual matrix-vector products $H_k g_k^i$, we approximate the variance-bias ratio as follows:
\begin{equation} \label{initialstep-practical}
\bar \alpha_k = \left( 1 + \frac{\Var\{g_k^{i}\}}{|S_k| \|\nabla F(x_k)\|^2} \right)^{-1},
\end{equation} 
where $\Var\{g_k^{i}\} = \E_k\left[\|g_k^{i} - \nabla F(x_k)\|^2\right]$. In our practical implementation, we estimate the population variance and gradient with the sample variance and gradient, respectively, yielding the initial steplength
\begin{equation}\label{eq: initial step length}
\alpha_k = \left( 1 + \frac{\Var_{i \in S_k^v}\{ 
g_k^{i}\}}{|S_k|\left\|g_k^{S_k}\right\|^2} \right)^{-1} ,
\end{equation}
where 
\begin{equation}
\Var_{i \in S_k^v}\{g_k^{i}\} = \frac{1}{|S_k^v| - 1}\sum_{i \in S_k^v}\left\|g_k^{i} - g_k^{S_k}\right\|^2
\end{equation}
and $S_k^v \subseteq S_k$.
With this initial value of $\alpha_k$ in hand, our algorithm performs a backtracking line search that aims to satisfy the Armijo condition
\begin{equation}\label{eq: stochastic line search}
\begin{aligned}
& F_{S_k} (x_k - \alpha_k H_k g_k^{S_k}) \\
& ~~~~~~~ \leq F_{S_k} (x_k) - c_1 \alpha_k (g_k^{S_k})^T H_k g_k^{S_k},
\end{aligned}
\end{equation}
where $c_1 >0$. 
\subsection{Stable Quasi-Newton Updates}

In the BFGS and L-BFGS methods, the inverse Hessian approximation is updated using the formula
\begin{equation}
\begin{aligned}
H_{k+1} & = V_k^TH_kV_k + \rho_ks_ks_k^T \\ 
\rho_k & = (y_k^Ts_k)^{-1}\\ 
V_k & = I - \rho_ky_ks_k^T
\end{aligned}
\end{equation}
where $s_k = x_{k+1} - x_k$ and $y_k$ is  the difference in the gradients at $x_{k+1}$ and $x_k$. When the batch changes from one iteration to the next ($S_{k+1} \neq S_k$), it is not obvious how $y_k$ should be defined. It has been observed that when  $y_k$ is computed using different samples, the updating process may be unstable, and hence it seems natural to use the same sample at the beginning and at the end of the iteration \cite{schraudolph2007stochastic}, and define  
\begin{equation}\label{full-overlap}
y_k = g_{k + 1}^{S_k} - g_k^{S_k}.
\end{equation}
However, this requires that the gradient be evaluated twice for every batch $S_k$ at $x_k$ and $x_{k+1}$. To avoid this additional cost,  Berahas et al. \yrcite{berahas2016multi} propose to use the overlap between consecutive samples in the gradient differencing. If we denote this overlap as $O_k = S_k \cap S_{k + 1}$, then one defines
\begin{equation}\label{multi-batch}
y_k = g_{k + 1}^{O_k} - g_k^{O_k}.
\end{equation}
This requires no extra computation since the two gradients in this expression are subsets of the gradients corresponding to the samples $S_k$ and $S_{k+1}$. The overlap should not be too small to avoid differencing noise, but this is easily achieved in practice.  We test both formulas for $y_k$ in our implementation of the method; see Section \ref{sec:parallel}.

\subsection{The Complete Algorithm}
\label{sec:algorithm}
The  pseudocode of the progressive batching L-BFGS method is given in Algorithm~\ref{alg: pb_L-BFGS}. Observe that the limited memory Hessian approximation $H_k$ in Line 8 is independent of the choice of the sample $S_k$. Specifically,  $H_k$ is defined  by a collection of curvature pairs $\{(s_j, y_j)\}$, where the most recent pair is based on the sample $S_{k-1}$; see Line 14.
For the batch size control test \eqref{IPQN: test}, we choose $\theta =0.9$ in the logistic regression experiments, and $\theta$ is a tunable parameter chosen in the interval $[0.9, 3]$ in the neural network experiments. The constant $c_1$ in \eqref{eq: stochastic line search} is set to $c_1 = 10^{-4}$.
%
%
%
For L-BFGS, we set the memory as $m = 10$. We skip the quasi-Newton update if the following curvature condition is not satisfied:
\begin{equation}
y_k^T s_k > \epsilon \|s_k\|^2, \quad\mbox{with} \ \epsilon=10^{-2}. 
\end{equation}
The initial Hessian matrix $H_0^k$ in the L-BFGS recursion at each iteration is chosen as $\gamma_k I$ where $\gamma_k = {y_k^Ts_k}/{y_k^Ty_k}$.  

\begin{algorithm}[h!]
	\caption{Progressive Batching L-BFGS Method }
	\label{alg: pb_L-BFGS}
	\textbf{Input:} Initial iterate $x_0$, initial sample size $|S_0|$;\\
	\textbf{Initialization:} Set $k \leftarrow 0$ 
	
	\textbf{Repeat} until convergence:
	\begin{algorithmic}[1]
		\STATE Sample $S_k \subseteq \{1,\cdots,N\}$ with sample size 
		$|S_k|$
		\IF{condition \eqref{IPQN: test} is not satisfied} 
		\STATE Compute $b_k$ using \eqref{IPQN: batch size}, and set $\hat b_k \leftarrow \lceil b_k \rceil - |S_k|$
		\STATE Sample $S^+ \subseteq \{1,\cdots,N\} \setminus S_k$  with $|S^+|= \hat b_k$
		\STATE Set $S_k \leftarrow S_k \cup S^+$
		\ENDIF
		\STATE Compute $g_k^{S_k}$
		\STATE Compute $p_k = -H_k g_k^{S_k}$ using L-BFGS 
		Two-Loop Recursion in \cite{mybook}
		\STATE Compute $\alpha_k$ using \eqref{eq: initial step length}
		\WHILE {the Armijo condition \eqref{eq: stochastic line search} not satisfied} 
		\STATE Set $\alpha_k = \alpha_k / 2$ 
		\ENDWHILE
		\STATE Compute $x_{k+1} = x_k + \alpha_k p_k$ 
		
		\STATE Compute $y_k$ using 
		(\ref{full-overlap}) or (\ref{multi-batch}) 
		\STATE Compute $s_k = x_{k+1} - x_k$
		
		\IF{$y_k^T s_k > \epsilon \|s_k\|^2$} 
		\IF{number of stored $(y_j, s_j)$ exceeds $m$}
		\STATE Discard oldest curvature pair $(y_j, s_j)$
		\ENDIF
		\STATE Store new curvature pair $(y_k, s_k)$
		\ENDIF	
		\STATE Set $k \leftarrow k + 1$ 
		\STATE Set $|S_k| = |S_{k-1}|$
	\end{algorithmic}
\end{algorithm}

\section{Convergence Analysis}
\label{sec:convergence}

We now present convergence results for the proposed algorithm, both for strongly convex  and nonconvex objective functions. Our emphasis is in analyzing the effect of progressive sampling, and therefore, we follow common practice and assume that the steplength in the algorithm is fixed ($\alpha_k= \alpha$), and that the inverse L-BFGS matrix $H_k$ has bounded eigenvalues, i.e.,
\begin{equation} \label{assum: eigs}
\Lambda_1 I \preceq H_k \preceq \Lambda_2 I.
\end{equation}
This assumption can be justified both in the convex and nonconvex cases under certain conditions; see \cite{berahas2016multi}. We assume that the sample size is controlled by the exact inner product quasi-Newton test \eqref{IPgrad: exacttest}. This test is designed for efficiency, and in rare situations could allow for the generation of arbitrarily long search directions.  To prevent this from happening, we introduce an additional control on the sample size $|S_k|$, by extending (to the quasi-Newton setting) the orthogonality test introduced in \cite{bollapragada2017adaptive}. This additional requirement states that the current sample size $|S_k|$ is acceptable only if 
\begin{align} \label{orth-i}
&\frac{\E_k \left[\left\|H_kg_k^{i} - \frac{\left(H_kg_k^{S_k}\right)^T (H_k\nabla F(x_k))}{\|H_k\nabla F(x_k)\|^2}H_k\nabla F(x_k)\right\|^2\right]}{|S_k|} \nonumber \\ &~~~~~~~~~~~~~~\leq \nu^2 \|H_k\nabla F(x_k)\|^2,
\end{align} 
for some given $\nu >0$.

We now establish linear convergence  when the objective is strongly convex. 
\begin{thm} \label{thmlin} 
Suppose that $F$ is twice continuously differentiable and that there exist constants $0 < \mu \leq L$ such that 
\begin{equation}
\mu I   \preceq \nabla^2 F(x) \preceq L I, \quad \forall x \in \R^d.
\end{equation} 
Let $\{x_k\}$ be generated by iteration \eqref{iter}, for any $x_0$, 
where $|S_{k}|$ is chosen by the (exact variance) inner product quasi-Newton test \eqref{IPgrad: exacttest}. Suppose  that the orthogonality condition \eqref{orth-i}  holds at every iteration, and that the matrices $H_k$ satisfy \eqref{assum: eigs}.  Then, if
\begin{equation}   \label{c-stepform}
\alpha_k = \alpha \leq \frac{1}{(1 + \theta^2+\nu^2)L\Lambda_2},
\end{equation}
we have that   
\begin{equation} \label{linear}
\E[F(x_k) - F(x^*)] \leq \rho^k (F(x_0) - F(x^*)),
\end{equation}
where  $x^*$ denotes the  minimizer of $F$, 
$
\rho= 1 -\mu \Lambda_1\alpha,
$
and $\E$ denotes the total expectation.

\end{thm}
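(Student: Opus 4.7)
The plan is to adapt the classical stochastic descent analysis but with the two control mechanisms (the inner product quasi-Newton test and the orthogonality test) replacing bounded-variance or strong-growth assumptions. I would start from the descent lemma implied by $L$-smoothness:
\begin{equation*}
F(x_{k+1}) \leq F(x_k) - \alpha \nabla F(x_k)^T H_k g_k^{S_k} + \tfrac{L\alpha^2}{2}\|H_k g_k^{S_k}\|^2,
\end{equation*}
take $\E_k[\cdot]$, and use unbiasedness of $g_k^{S_k}$ to replace the inner product by $\nabla F(x_k)^T H_k \nabla F(x_k)$. The remaining task is to control $\E_k\|H_k g_k^{S_k}\|^2$, which I would decompose as $\|H_k \nabla F(x_k)\|^2 + \E_k\|H_k g_k^{S_k} - H_k \nabla F(x_k)\|^2$.

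Next, the centerpiece is bounding the variance term $\E_k\|H_k g_k^{S_k} - H_k \nabla F(x_k)\|^2$ by splitting $H_k g_k^{S_k} - H_k\nabla F(x_k)$ into the component along $H_k \nabla F(x_k)$ and the component orthogonal to it. The squared length of the parallel component equals $(v_k^T w_k - \|w_k\|^2)^2/\|w_k\|^2$ with $w_k = H_k\nabla F(x_k)$ and $v_k = H_k g_k^{S_k}$, which after taking expectation is exactly what the inner product test \eqref{IPgrad: exacttest} controls by $\theta^2 \|H_k\nabla F(x_k)\|^2$. The squared length of the orthogonal component is controlled by the orthogonality condition \eqref{orth-i} by $\nu^2 \|H_k\nabla F(x_k)\|^2$. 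Adding these gives
\begin{equation*}
\E_k\|H_k g_k^{S_k}\|^2 \leq (1+\theta^2+\nu^2)\|H_k\nabla F(x_k)\|^2.
\end{equation*}

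I would then use $H_k \preceq \Lambda_2 I$ to bound $\|H_k\nabla F(x_k)\|^2 \leq \Lambda_2 \nabla F(x_k)^T H_k \nabla F(x_k)$, so the descent inequality becomes
\begin{equation*}
\E_k[F(x_{k+1})] \leq F(x_k) - \alpha\Bigl(1 - \tfrac{L\alpha(1+\theta^2+\nu^2)\Lambda_2}{2}\Bigr)\nabla F(x_k)^T H_k \nabla F(x_k).
\end{equation*}
The hypothesis \eqref{c-stepform} on the stepsize makes the parenthesized factor at least $1/2$, and then $H_k \succeq \Lambda_1 I$ yields $\E_k[F(x_{k+1})] \leq F(x_k) - \tfrac{\alpha \Lambda_1}{2}\|\nabla F(x_k)\|^2$.

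Finally, I would invoke the Polyak--\L{}ojasiewicz inequality $\|\nabla F(x_k)\|^2 \geq 2\mu(F(x_k)-F(x^*))$ (a consequence of $\mu$-strong convexity) to obtain $\E_k[F(x_{k+1}) - F(x^*)] \leq (1 - \alpha\mu\Lambda_1)(F(x_k) - F(x^*))$; taking total expectation and iterating gives \eqref{linear} with $\rho = 1 - \mu\Lambda_1\alpha$. The main obstacle, and where I expect to spend most of the care, is the variance decomposition in the middle step: writing the parallel and orthogonal pieces in a way that matches exactly the quantities appearing in \eqref{IPgrad: exacttest} and \eqref{orth-i}, and verifying that the scaling by $1/|S_k|$ inherent in averaging i.i.d.\ samples lines up correctly so that the two tests deliver a clean $(1+\theta^2+\nu^2)$ constant without any residual cross terms.
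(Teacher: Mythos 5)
Your proposal is correct and follows essentially the same route as the paper: the supplement's Lemma C.3 obtains the bound $\E_k\|H_k g_k^{S_k}\|^2 \le (1+\theta^2+\nu^2)\|H_k\nabla F(x_k)\|^2$ by exactly the Pythagorean split you describe (the orthogonality condition \eqref{orth-i} handling the component orthogonal to $H_k\nabla F(x_k)$ and the IPQN test \eqref{IPgrad: exacttest} handling the parallel one, each via the $1/|S_k|$ variance-of-the-mean reduction), and then concludes with the same smoothness, eigenvalue-bound, and strong-convexity steps you give. The only difference is cosmetic bookkeeping: the paper expands $\E_k\|v_k - \mathrm{proj}_{w_k} v_k\|^2$ and rearranges rather than centering at $w_k$ first, which yields the identical $(1+\theta^2+\nu^2)$ constant.
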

The proof of this result is given in the supplement. 
We now consider the case when $F$ is nonconvex and bounded below.

\begin{thm} \label{thmsublin}
Suppose that $F$ is twice continuously 
differentiable and bounded below, and that there exists a constant $ L > 0$ such that 
\begin{equation}
\nabla^2 F(x) \preceq L I, \quad \forall x \in \R^d.
\end{equation} 
Let $\{x_k\}$ be generated by iteration \eqref{iter}, for any $x_0$, 
where $|S_{k}|$ is chosen so that 
\eqref{IPgrad: exacttest} and  \eqref{orth-i} are satisfied, and suppose that \eqref{assum: eigs} holds. Then, if $\alpha_k$ satisfies
 \eqref{c-stepform},
%
we have
\begin{equation}   \label{convergence}
\lim_{k \rightarrow \infty} \mathbb{E} [\|\nabla F(x_k)\|^2] \rightarrow 0.
\end{equation}  
Moreover, for any positive integer $T$ we have that 
\begin{align*}
\min_{0\leq k \leq T-1} \E [\|\nabla F(x_k)\|^2] &\leq \frac{2}{\alpha 
T\Lambda_1}  (F(x_0) - F_{min}),
\end{align*}
where $F_{min}$ is a lower bound on  $F$ in $\mathbb{R}^d$.
\end{thm}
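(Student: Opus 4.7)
My plan is to mirror the descent-inequality argument used for Theorem \ref{thmlin}, but without invoking strong convexity. Starting from the Lipschitz upper bound on $\nabla F$,
\begin{equation*}
F(x_{k+1}) \leq F(x_k) - \alpha\, \nabla F(x_k)^T H_k g_k^{S_k} + \tfrac{L\alpha^2}{2}\|H_k g_k^{S_k}\|^2,
\end{equation*}
and taking the conditional expectation $\E_k[\cdot]$ at $x_k$ (using that $H_k$ is measurable given $x_k$ and $\E_k[g_k^{S_k}] = \nabla F(x_k)$), the analysis reduces to controlling $\E_k[\|H_k g_k^{S_k}\|^2]$.

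I would then write $\E_k[\|H_k g_k^{S_k}\|^2] = \|H_k\nabla F(x_k)\|^2 + \E_k[\|H_k g_k^{S_k} - H_k\nabla F(x_k)\|^2]$ and decompose the deviation into pieces parallel and orthogonal to $H_k\nabla F(x_k)$. The exact inner product quasi-Newton test \eqref{IPgrad: exacttest} bounds the squared parallel component in expectation by $\theta^2\|H_k\nabla F(x_k)\|^2$, and the orthogonality condition \eqref{orth-i} bounds the squared orthogonal component by $\nu^2\|H_k\nabla F(x_k)\|^2$. Combining these with the spectral inequality $H_k^2 \preceq \Lambda_2 H_k$ (which follows from $H_k \preceq \Lambda_2 I$) gives
\begin{equation*}
\E_k[\|H_k g_k^{S_k}\|^2] \leq (1+\theta^2+\nu^2)\Lambda_2\, \nabla F(x_k)^T H_k \nabla F(x_k).
\end{equation*}

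Plugging this back into the descent inequality, the stepsize ceiling \eqref{c-stepform} is precisely tuned so that the quadratic term absorbs half the linear term, yielding
\begin{equation*}
\E_k[F(x_{k+1})] \leq F(x_k) - \tfrac{\alpha}{2}\nabla F(x_k)^T H_k\nabla F(x_k) \leq F(x_k) - \tfrac{\alpha\Lambda_1}{2}\|\nabla F(x_k)\|^2,
\end{equation*}
where the last step uses $H_k \succeq \Lambda_1 I$. Taking total expectations, summing from $k=0$ to $T-1$, and bounding below by $F_{min}$ telescopes to $\sum_{k=0}^{T-1}\E[\|\nabla F(x_k)\|^2] \leq \tfrac{2}{\alpha\Lambda_1}(F(x_0)-F_{min})$. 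The complexity bound follows from $\min \leq \mathrm{mean}$, and summability in $k$ forces $\E[\|\nabla F(x_k)\|^2] \to 0$.

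The main technical obstacle is the variance decomposition: I must verify carefully in the parallel-plus-orthogonal frame centered on $H_k\nabla F(x_k)$ that the two sampling tests really cover the full second moment of $H_k g_k^{S_k} - H_k\nabla F(x_k)$, with any cross terms absorbed. The particular form of \eqref{orth-i}, in which $H_k g_k^{S_k}$ appears inside the projection denominator, requires some bookkeeping; but once this variance inequality is secured the remainder is a routine quasi-Newton computation that exactly parallels the corresponding step in the proof of Theorem \ref{thmlin}.
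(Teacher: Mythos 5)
Your proposal is correct and follows essentially the same route as the paper: the paper isolates your key variance bound as a standalone descent lemma, showing $\E_k\left[\|H_k g_k^{S_k}\|^2\right] \leq (1+\theta^2+\nu^2)\|H_k \nabla F(x_k)\|^2$ by exactly the split you describe --- the component along $H_k\nabla F(x_k)$ controlled by the exact IPQN test and the orthogonal component controlled by condition \eqref{orth-i} (there are no cross terms, by orthogonality) --- and then concludes via the same Lipschitz descent inequality, telescoping sum, and min-versus-mean argument. The only cosmetic difference is that you center the decomposition at $H_k\nabla F(x_k)$ before projecting, while the paper projects $H_k g_k^{S_k}$ directly; these are algebraically equivalent.
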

The proof is given in the supplement. This result shows that the sequence of gradients $\{ \| \nabla F(x_k) \| \}$ converges to zero in expectation, and establishes a global sublinear rate of convergence of the smallest gradients generated after every $T$ steps. 

\section{Numerical Results}
\label{sec:parallel}

In this section, we present numerical results for the proposed algorithm, which we refer to as PBQN for the \textbf{P}rogressive \textbf{B}atching \textbf{Q}uasi-\textbf{N}ewton algorithm.

	\subsection{Experiments on Logistic Regression Problems}

We first test our algorithm on binary classification problems where the objective function is given by the logistic loss with $\ell_2$ regularization:
\begin{equation} \label{logistic-loss}
R(x)=\frac{1}{N} \sum_{i=1}^{N}\log(1 + \exp(-z^ix^Ty^i)) + \frac{\lambda}{2}\|x\|^2, 
\end{equation} 
with $\lambda = {1}/{N}$. We consider the $8$ datasets listed in the supplement. An approximation $R^*$ of the optimal function value is computed for each problem by running the full batch L-BFGS method until $\|\nabla R(x_k)\|_{\infty} \leq 10^{-8}$. Training error is defined as $R(x_k) - R^*$, where $R(x_k)$ is evaluated over the training set; test loss is evaluated over the test set without the $\ell_2$ regularization term.


 We tested two options for computing the curvature vector $y_k$ in the PBQN method: the multi-batch (MB) approach \eqref{multi-batch} with 25\% sample overlap, and  the full overlap (FO) approach \eqref{full-overlap}. We set $\theta = 0.9$ in \eqref{IPQN: test}, chose $|S_0| = 512$, and set all other parameters to the default values given in Section~\ref{sec:progressive}. Thus, none of the parameters in our PBQN method were tuned for each individual dataset.  We compared our algorithm against two other methods: (i) Stochastic gradient (SG) with a batch size of 1; (ii) SVRG \cite{johnson2013accelerating} with the inner loop length set to $N$. The steplength for SG and SVRG is constant and tuned for each problem ($\alpha_k \equiv \alpha = 2^j$, for $j \in \{-10, -9, ..., 9, 10\}$) so as to give best performance.

In Figures \ref{exp:spam} and \ref{exp:covtype} we present results for two datasets, {\tt spam} and {\tt covertype}; the rest of the results are given in the supplement. The horizontal axis measures the number of full gradient evaluations, or equivalently, the number of times that $N$ component gradients $\nabla F_i$ were evaluated. 
The left-most figure reports the long term trend over 100 gradient evaluations, while the rest of the figures zoom into the first 10 gradient evaluations to show the initial behavior of the methods. The vertical axis measures training error, test loss, and test accuracy, respectively, from left to right.
\begin{figure*}[!htp]
	\begin{centering}
		\includegraphics[width=0.24\linewidth]{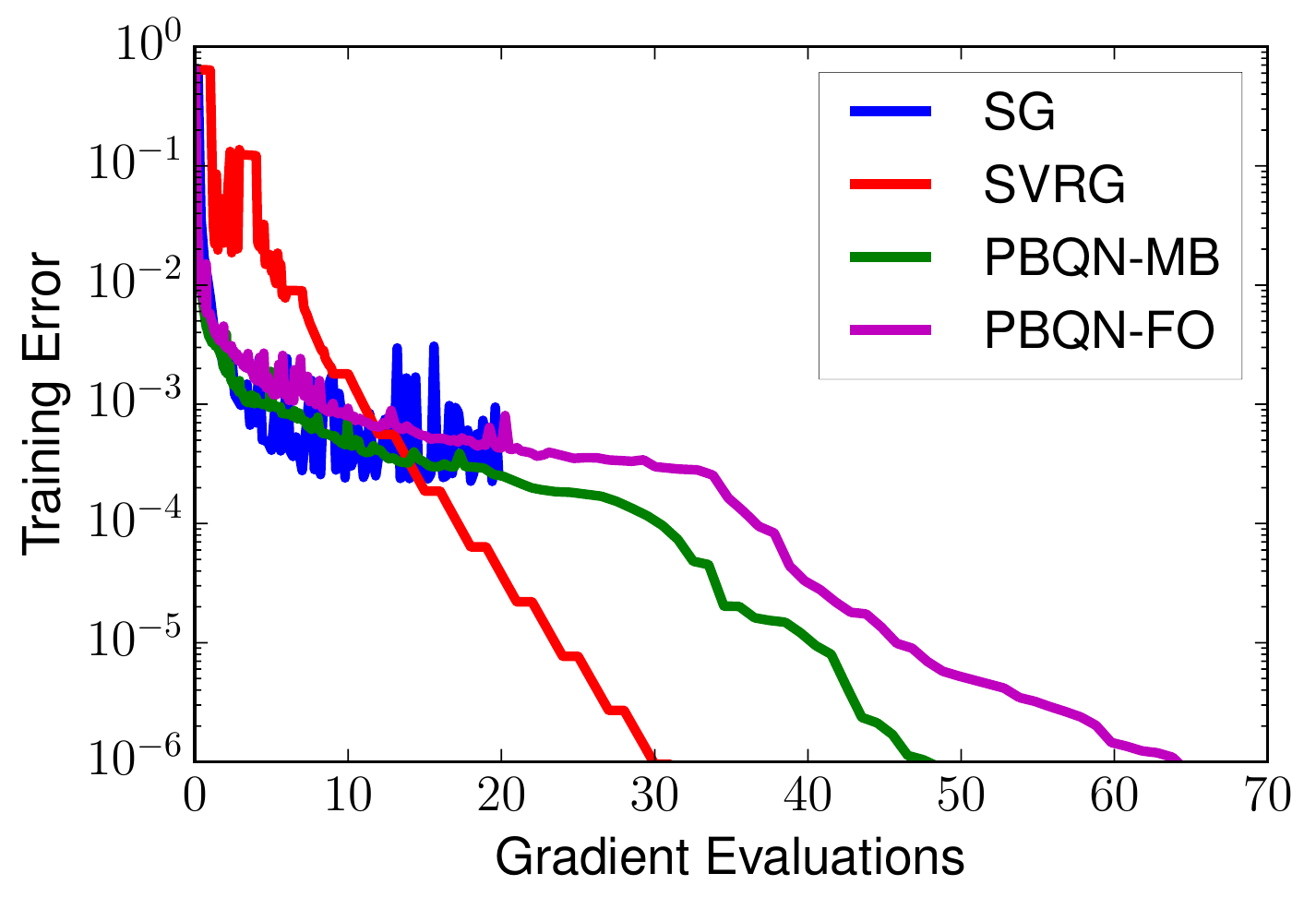}
		\includegraphics[width=0.24\linewidth]{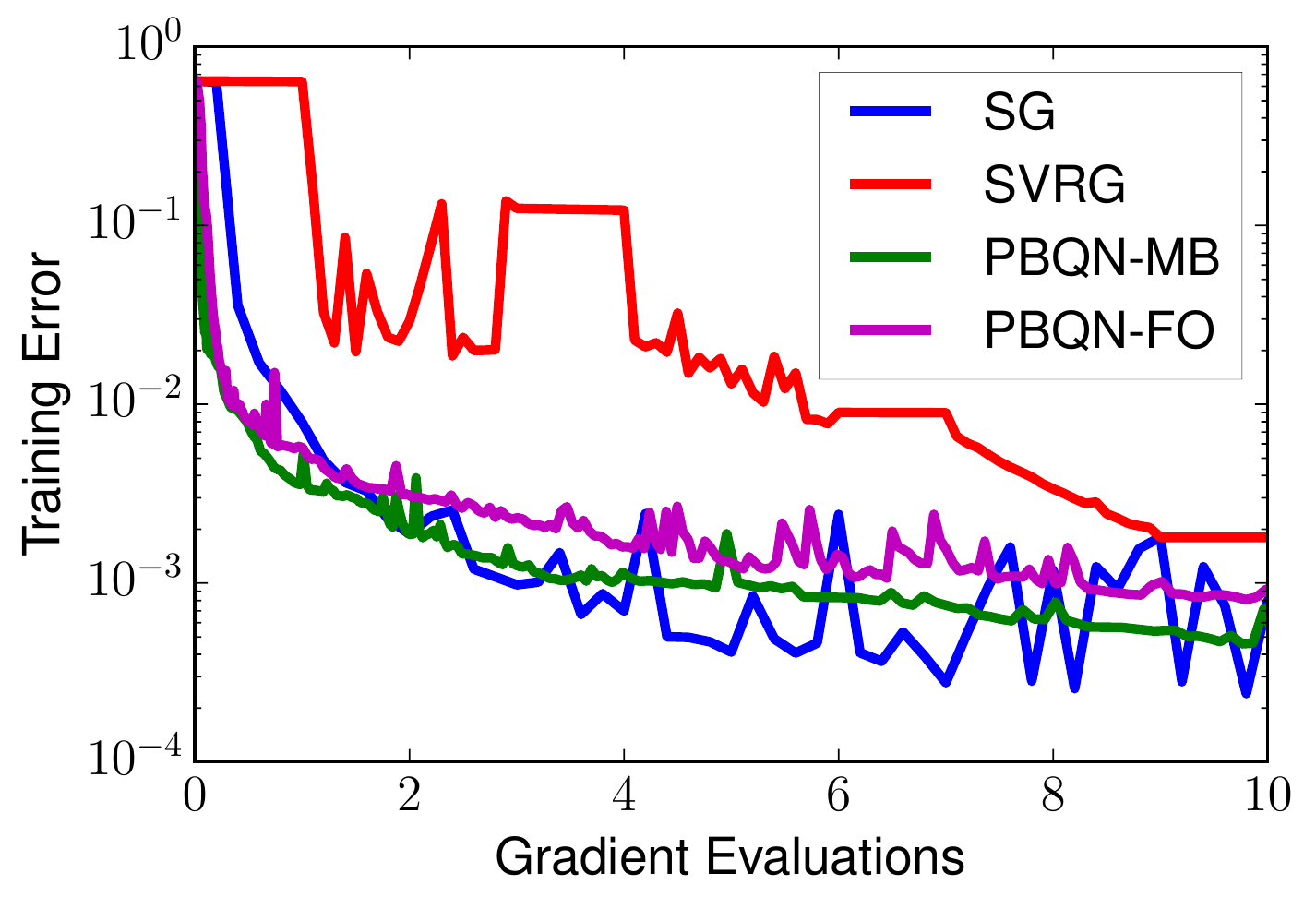}
		\includegraphics[width=0.24\linewidth]{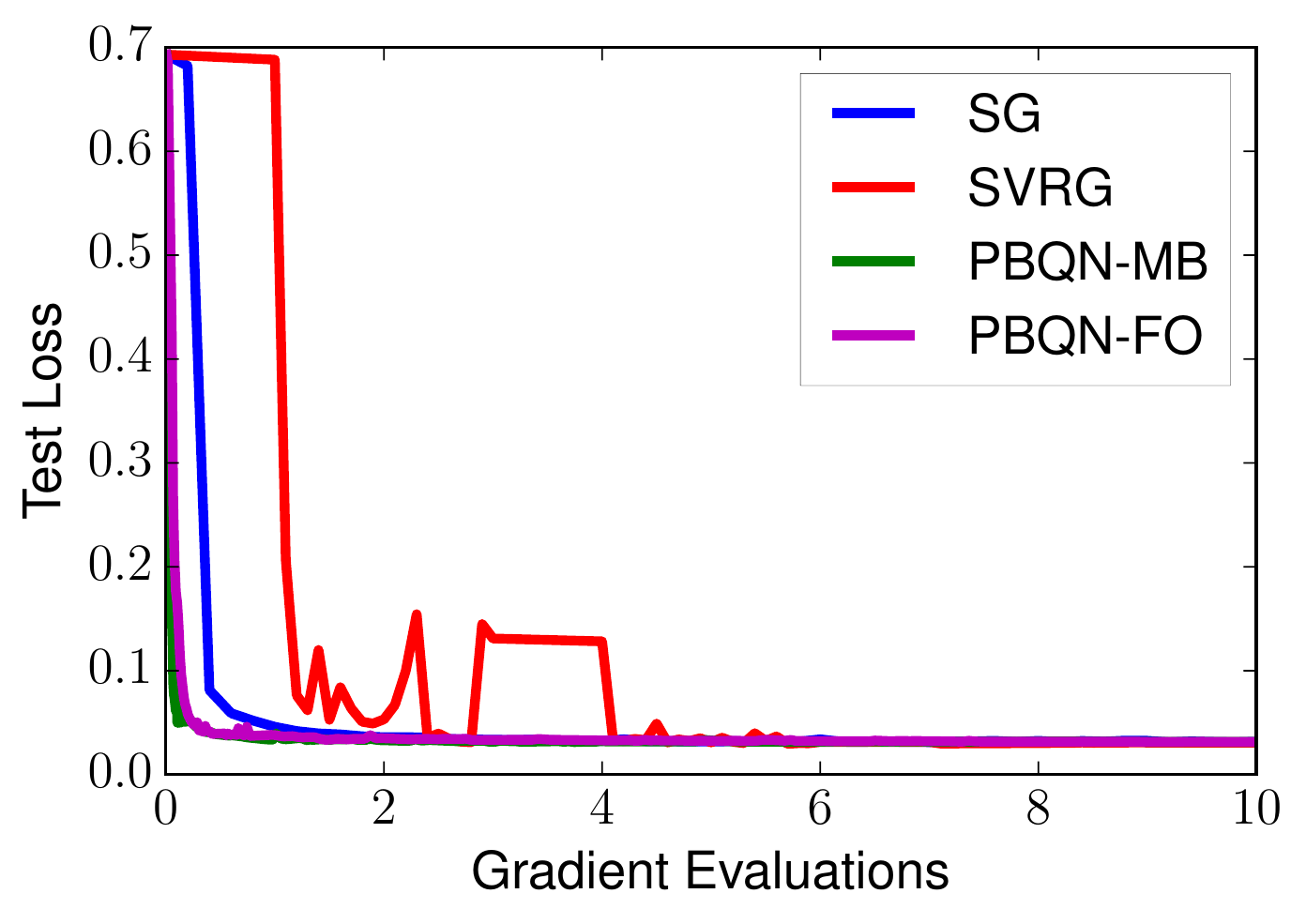}
		\includegraphics[width=0.24\linewidth]{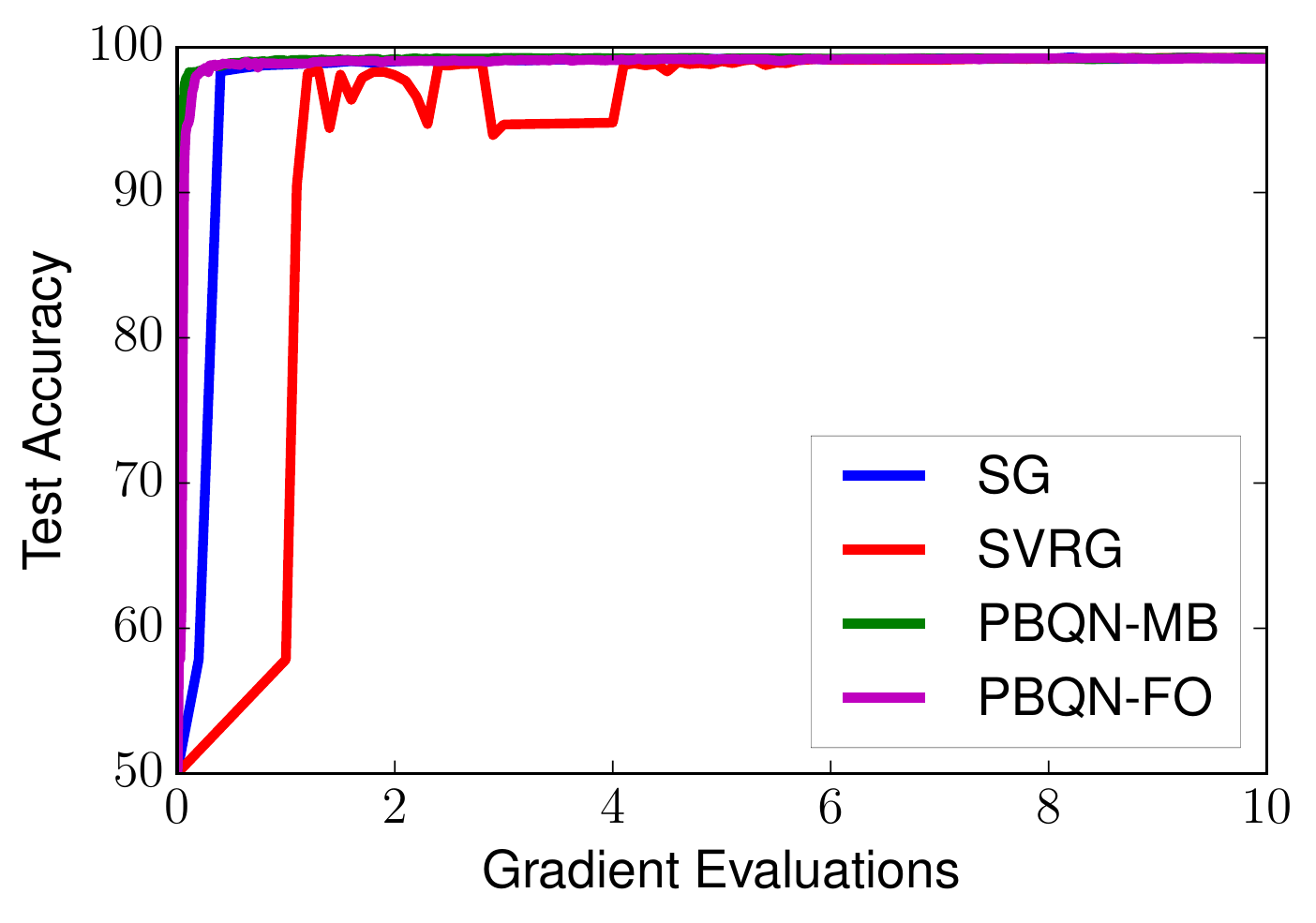}
		
		\par\end{centering}
	\caption{ \textbf{spam dataset:} Performance of the progressive batching L-BFGS method (PBQN), with multi-batch (25\% overlap) and full-overlap approaches, and the SG and SVRG methods.}
	\label{exp:spam} 
\end{figure*}

\begin{figure*}[!htp]
	\begin{centering}
		\includegraphics[width=0.24\linewidth]{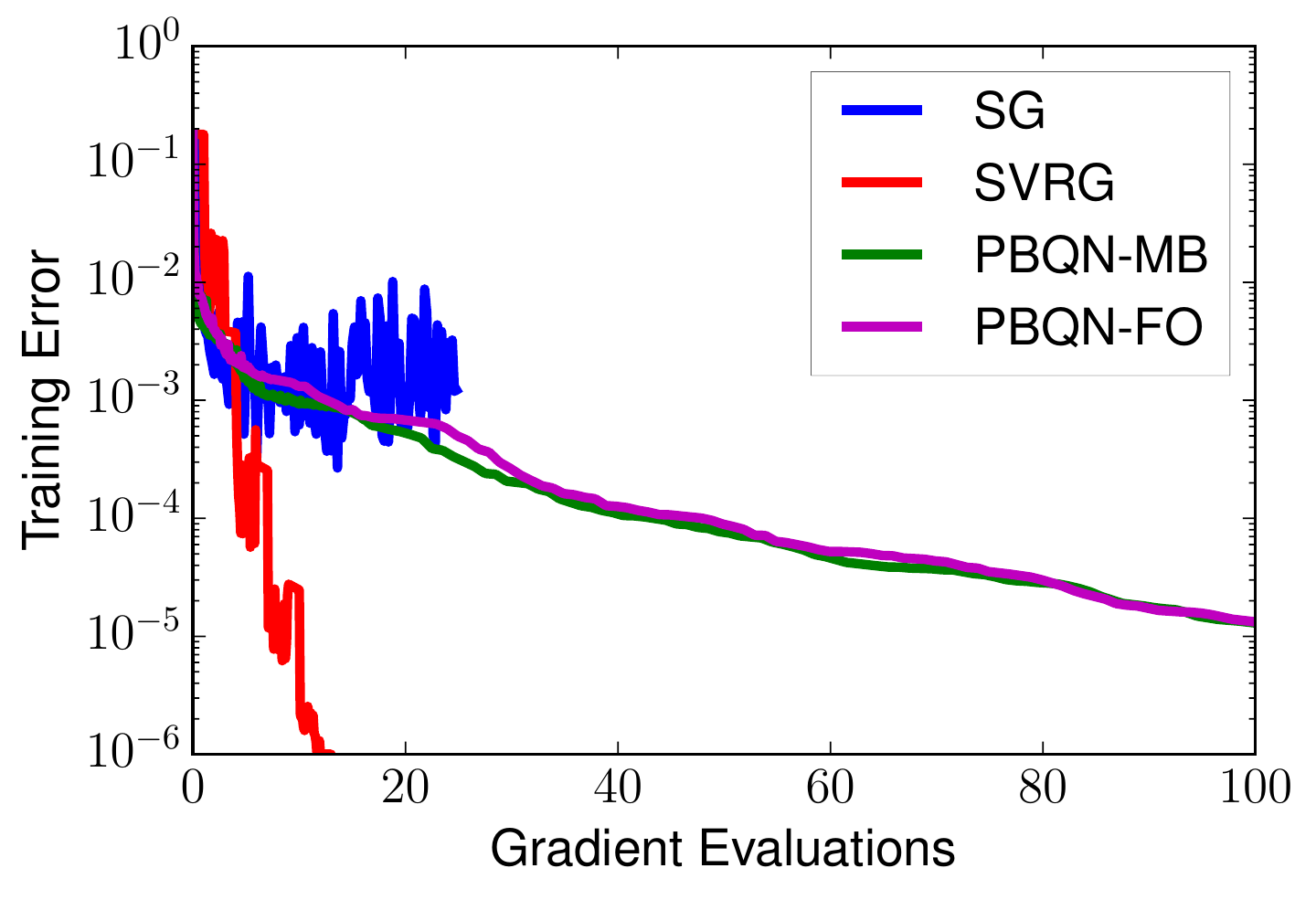}
		\includegraphics[width=0.24\linewidth]{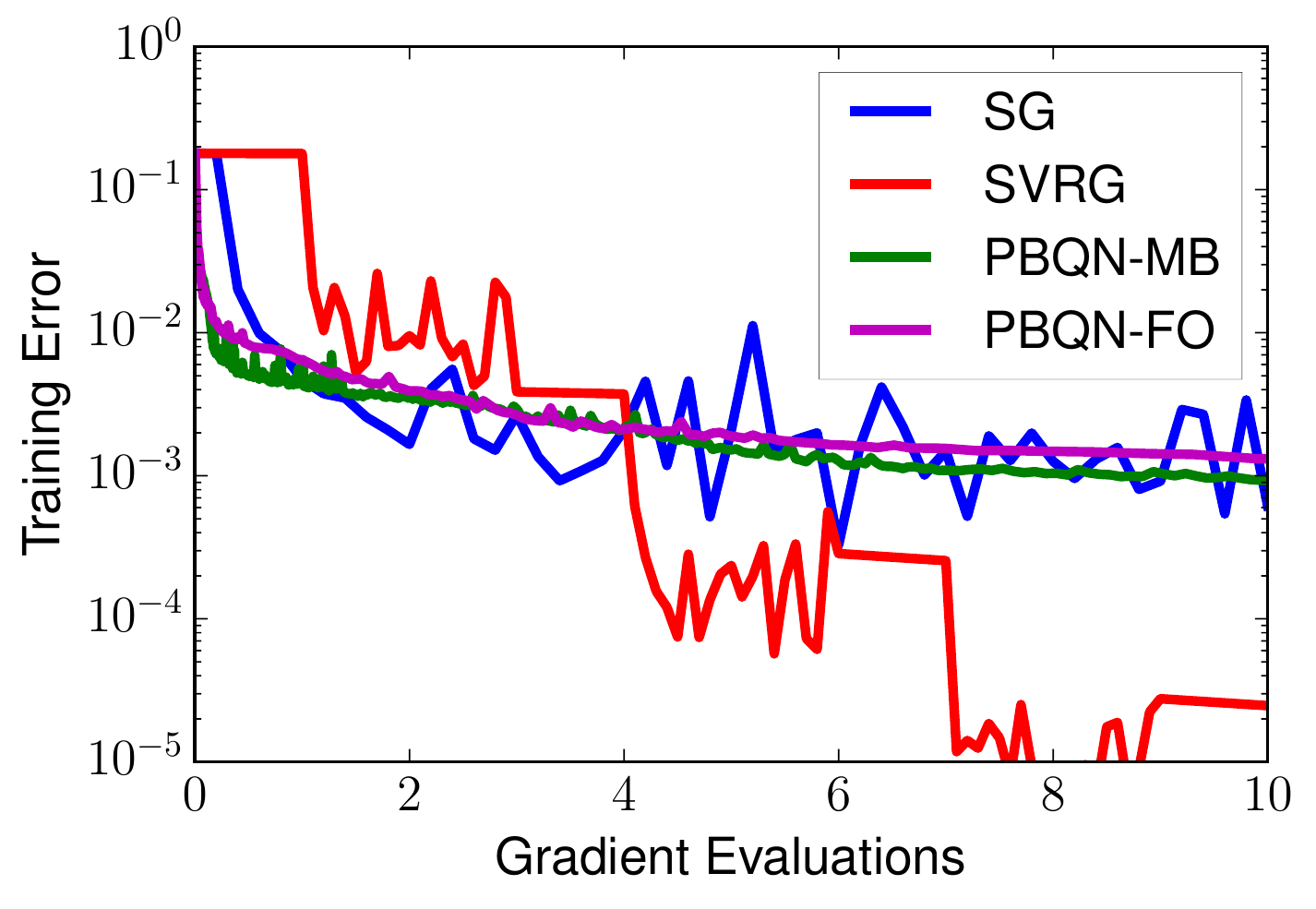}
		\includegraphics[width=0.24\linewidth]{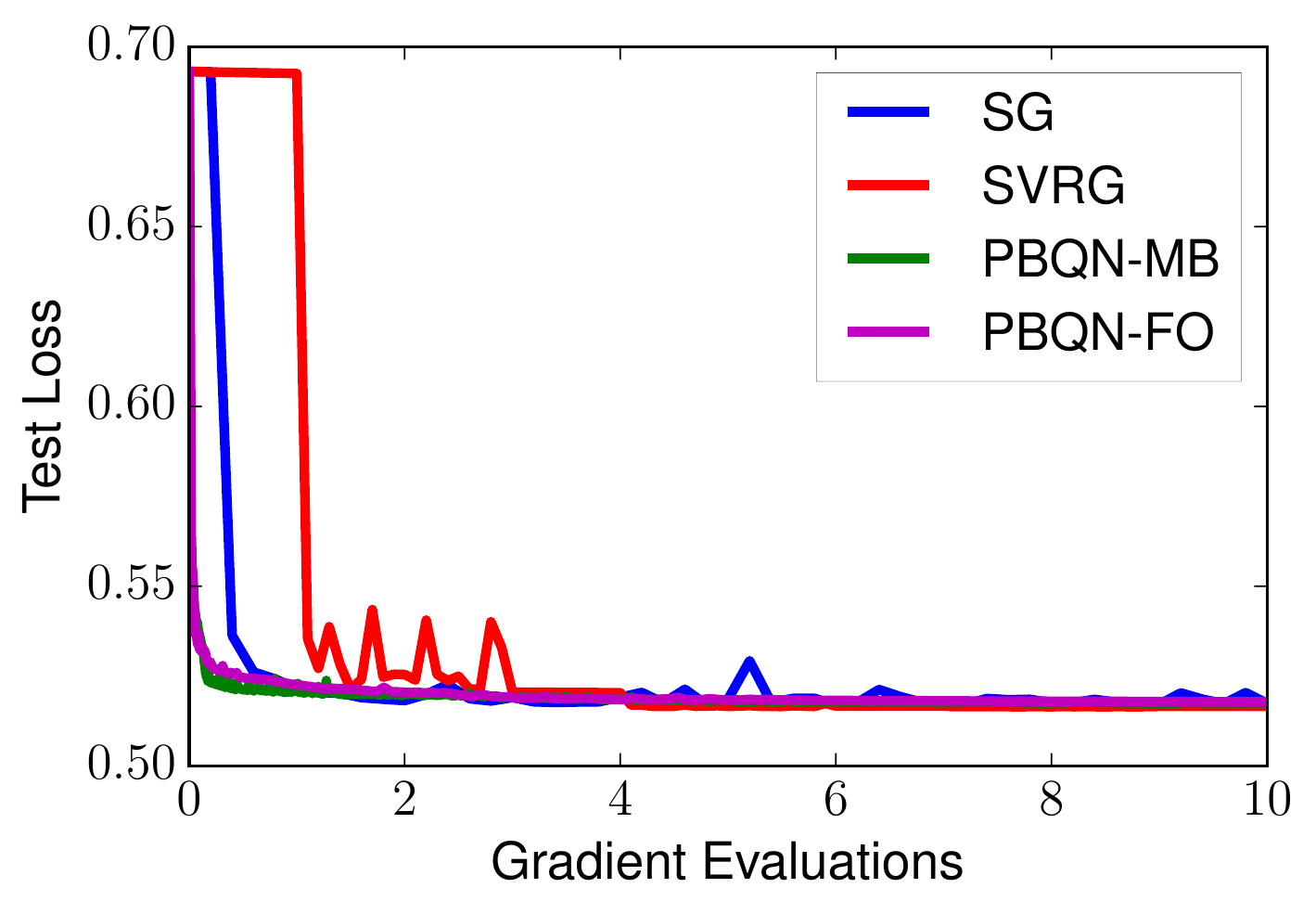}
		\includegraphics[width=0.24\linewidth]{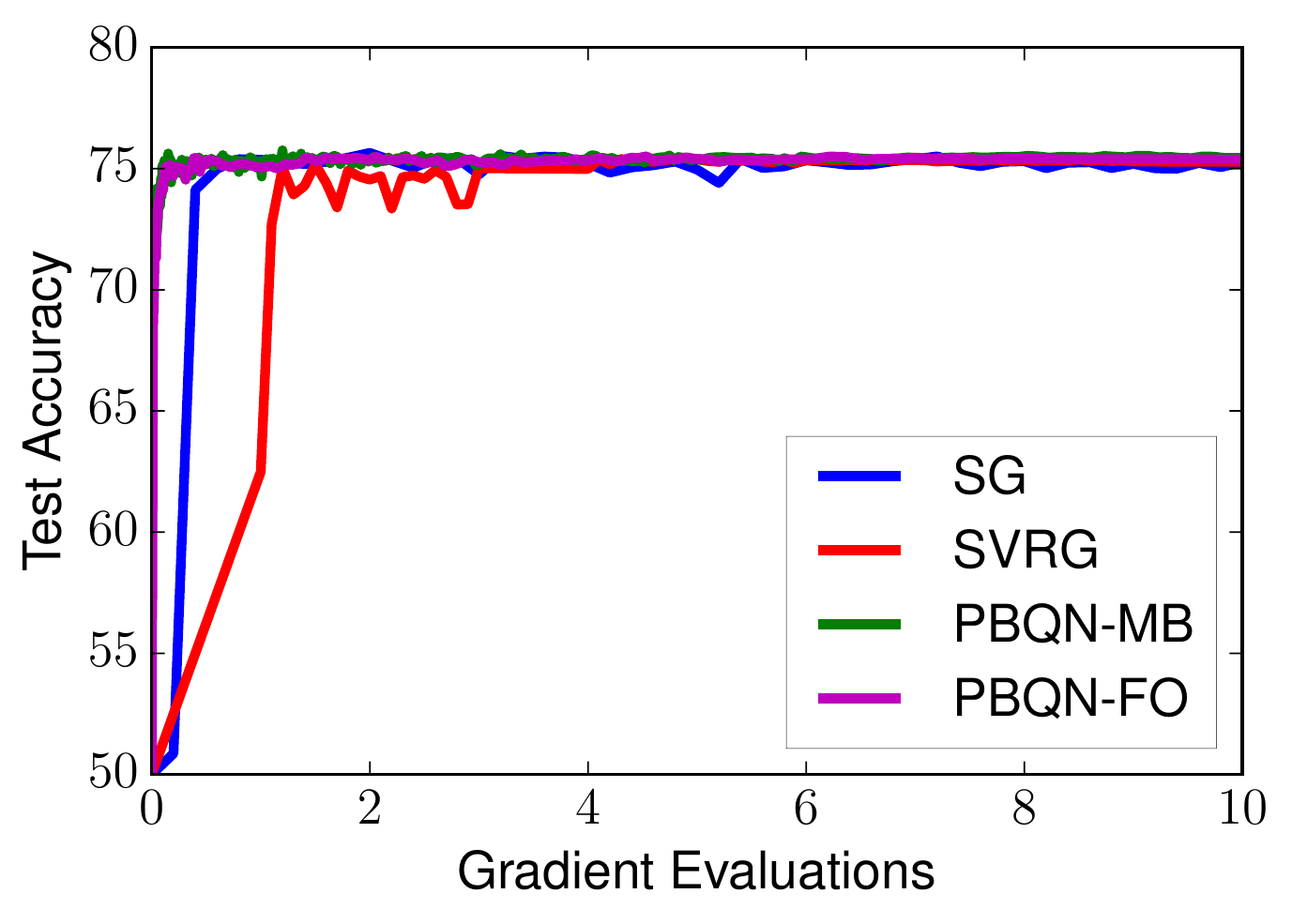}
		
		\par\end{centering}
	\caption{ \textbf{covertype dataset:} Performance of the progressive batching L-BFGS methods, with multi-batch (25\% overlap) and full-overlap approaches, and the SG and SVRG methods.}
	\label{exp:covtype} 
\end{figure*}

The proposed algorithm competes well for these two datasets
in terms of training error, test loss and test accuracy, and decreases these measures more evenly than the SG and SVRG. Our numerical experience indicates that formula \eqref{eq: initial step length} is quite effective at estimating the steplength parameter, as it is accepted by the backtracking line search for most iterations. As a result, the line search computes very few additional function values.

It is interesting to note that SVRG  is not as efficient in the initial epochs compared to PBQN or SG, when measured either in terms of test loss and test accuracy.  The training error for SVRG decreases rapidly in  later epochs but this rapid improvement is not observed in the test loss and accuracy. Neither the PBQN nor SVRG  significantly outperforms the other across all datasets tested in terms of training error, as observed in the supplement. 

Our results  indicate that defining the curvature vector using the MB approach is preferable to using the FB approach.
The number of iterations required by the PBQN method is significantly smaller compared to the SG method, suggesting the potential efficiency gains of a parallel implementation of our algorithm.

	\subsection{Results on Neural Networks}

We have performed a preliminary investigation into the performance of the PBQN algorithm for training neural networks.  As is well-known, the resulting optimization problems are quite difficult due to the existence of local minimizers, some of which generalize poorly. Thus our first requirement when applying the PBQN method was to obtain as good generalization as SG, something we have achieved.

Our investigation into how to obtain fast performance is, however, still underway for reasons discussed below.  Nevertheless, our  results are worth reporting because they show that our line search procedure is performing as expected, and that the overall number of iterations required by the PBQN method is small enough so that a parallel implementation could yield state-of-the-art results, based  on the theoretical performance model detailed in the supplement.

We compared our algorithm, as described in Section \ref{sec:progressive}, against SG and Adam \cite{kingma2014adam}. 
It has taken many years to design regularizations techniques and heuristics that greatly improve the performance of the SG method for deep learning \cite{srivastava2014dropout,ioffe2015batch}. These include batch normalization and dropout, which (in their current form) are not conducive to the PBQN approach due to the need for gradient consistency when evaluating the curvature pairs in L-BFGS. Therefore, we do not implement batch normalization and dropout in any of the methods tested, and leave the study of their extension to the PBQN setting for future work.

We consider three network architectures: (i) a small convolutional neural network on CIFAR-10 ($\mathcal{C}$) \cite{krizhevsky2009learning}, (ii) an AlexNet-like convolutional network on MNIST and CIFAR-10 ($\mathcal{A}_1, \mathcal{A}_2$, respectively) \cite{lecun1998gradient,krizhevsky2012imagenet}, and (iii) a residual network (ResNet18) on CIFAR-10 ($\mathcal{R}$) \cite{he2016deep}. The network architecture details and additional plots are given in the supplement. All of these networks were implemented in PyTorch \cite{paszke2017automatic}.
The results for the CIFAR-10 AlexNet and CIFAR-10 ResNet18 are given in Figures \ref{exp:cifar10_alexnet} and \ref{exp:cifar10_resnet18}, respectively. We report results  both against the total number of iterations and the total number of gradient evaluations. Table \ref{table: test accuracy} shows the best test accuracies attained by each of the four methods over the various networks.

In all our experiments, we  initialize the batch size as $|S_0| = 512$ in the PBQN method, and fix the batch size to $|S_k| = 128$ for SG and Adam. The parameter $\theta$ given in \eqref{IPQN: test}, which controls the batch size increase in the PBQN method, was tuned lightly by chosing among the 3 values: 0.9, 2, 3.    SG and Adam are tuned using a development-based decay (dev-decay) scheme, which track the best validation loss at each epoch and reduces the steplength by a constant factor $\delta$ if the validation loss does not improve after $e$ epochs.   

\begin{table}[!htp]
\caption{Best test accuracy performance of SG, Adam, multi-batch L-BFGS, and full overlap L-BFGS on various networks over 5 different runs and initializations.}
\label{table: test accuracy}
\vskip 0.15in
\centering
\begin{tabular}{c p{1cm} p{1cm} p{1cm} p{1cm}}
\toprule
Network           & SG & Adam & MB & FO \\
\midrule
$\mathcal{C}$    & 66.24 & 67.03 & 67.37 & 62.46 \\
$\mathcal{A}_1$  & 99.25 & 99.34 & 99.16 & 99.05 \\
$\mathcal{A}_2$  & 73.46 & 73.59 & 73.02 & 72.74 \\
$\mathcal{R}$    & 69.5  & 70.16 & 70.28 & 69.44 \\
\bottomrule
\end{tabular}
\end{table}

\begin{figure*}[!htp]
\begin{centering}
\includegraphics[width=0.24\linewidth]{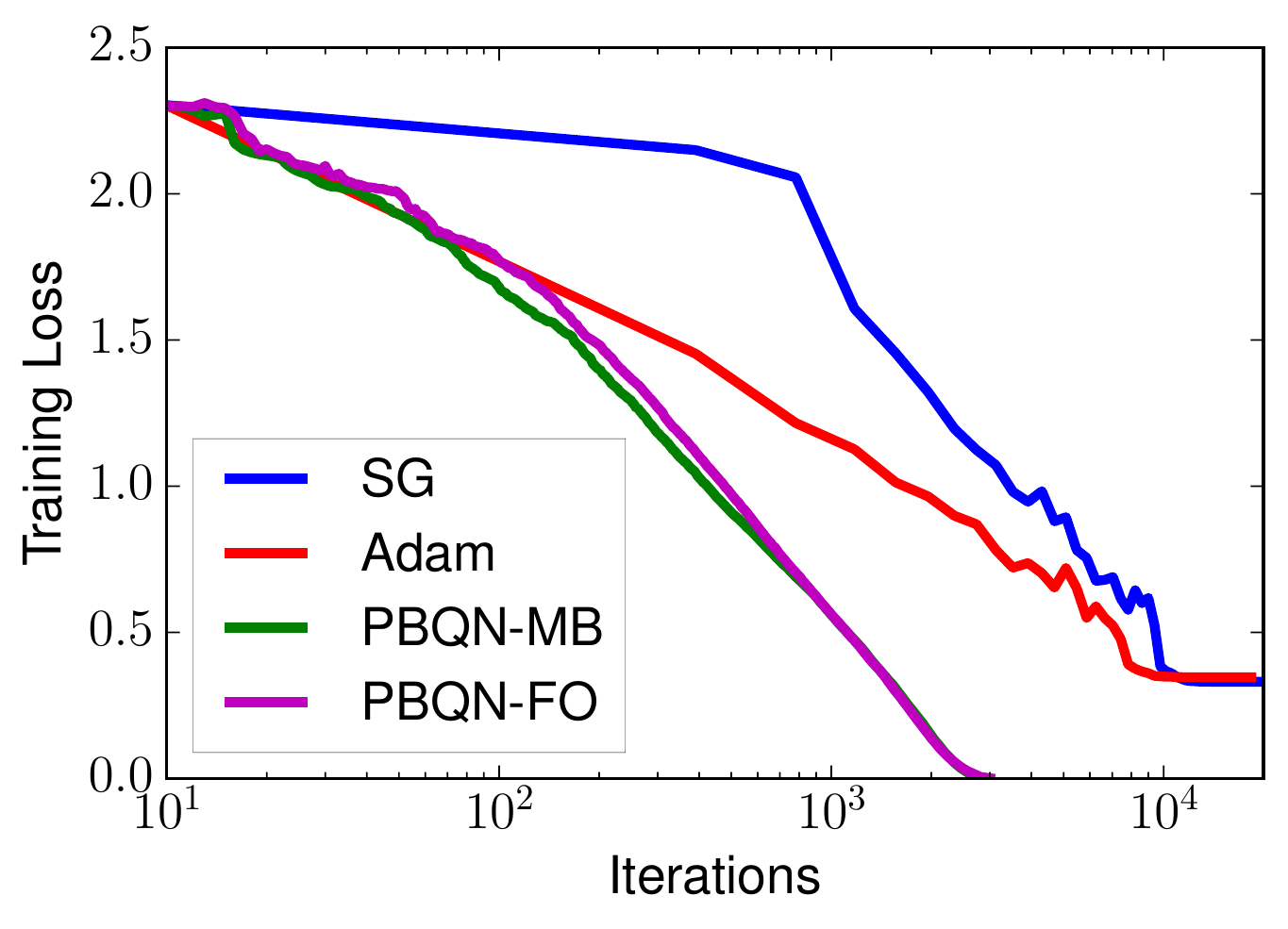}
\includegraphics[width=0.24\linewidth]{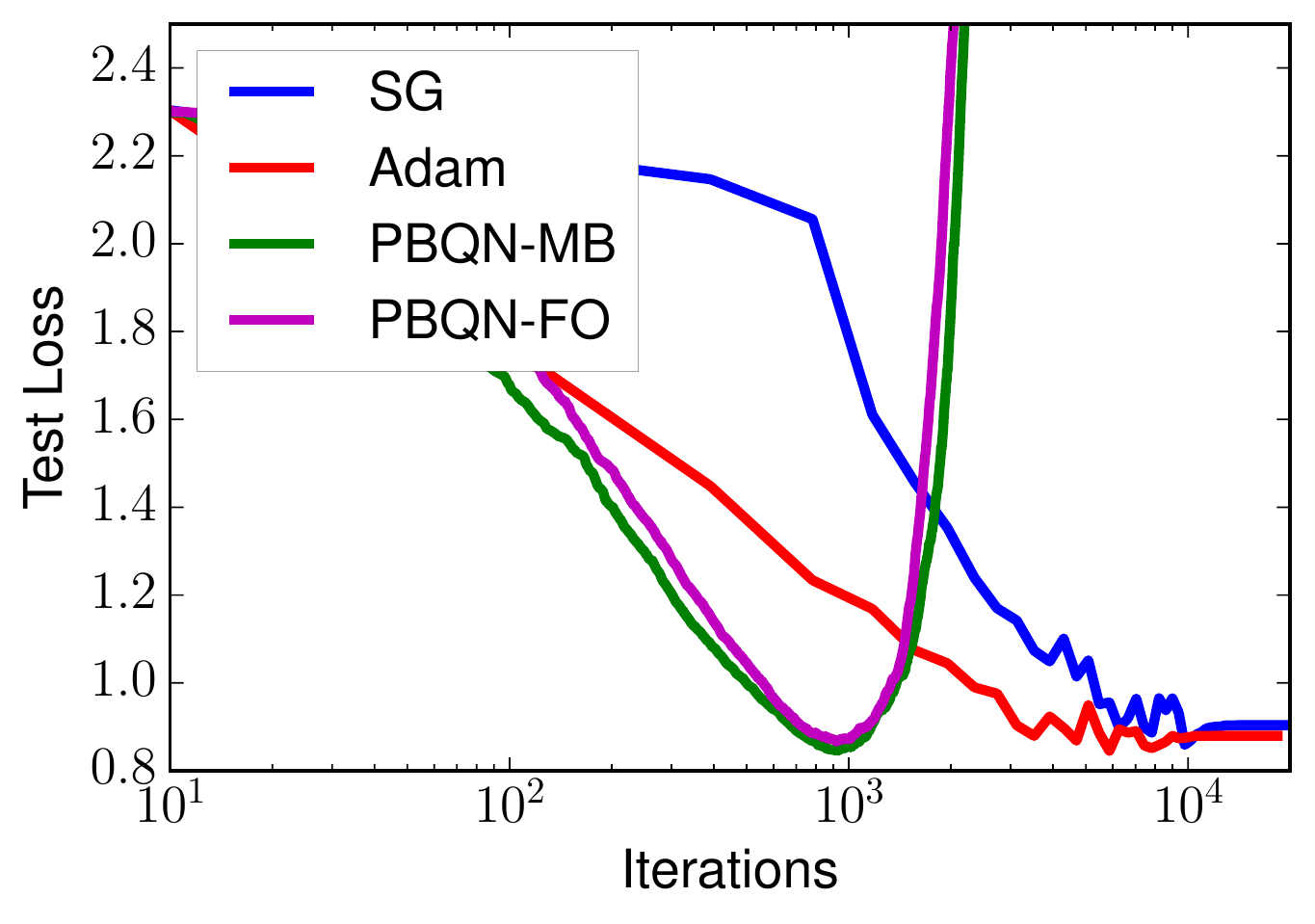}
\includegraphics[width=0.24\linewidth]{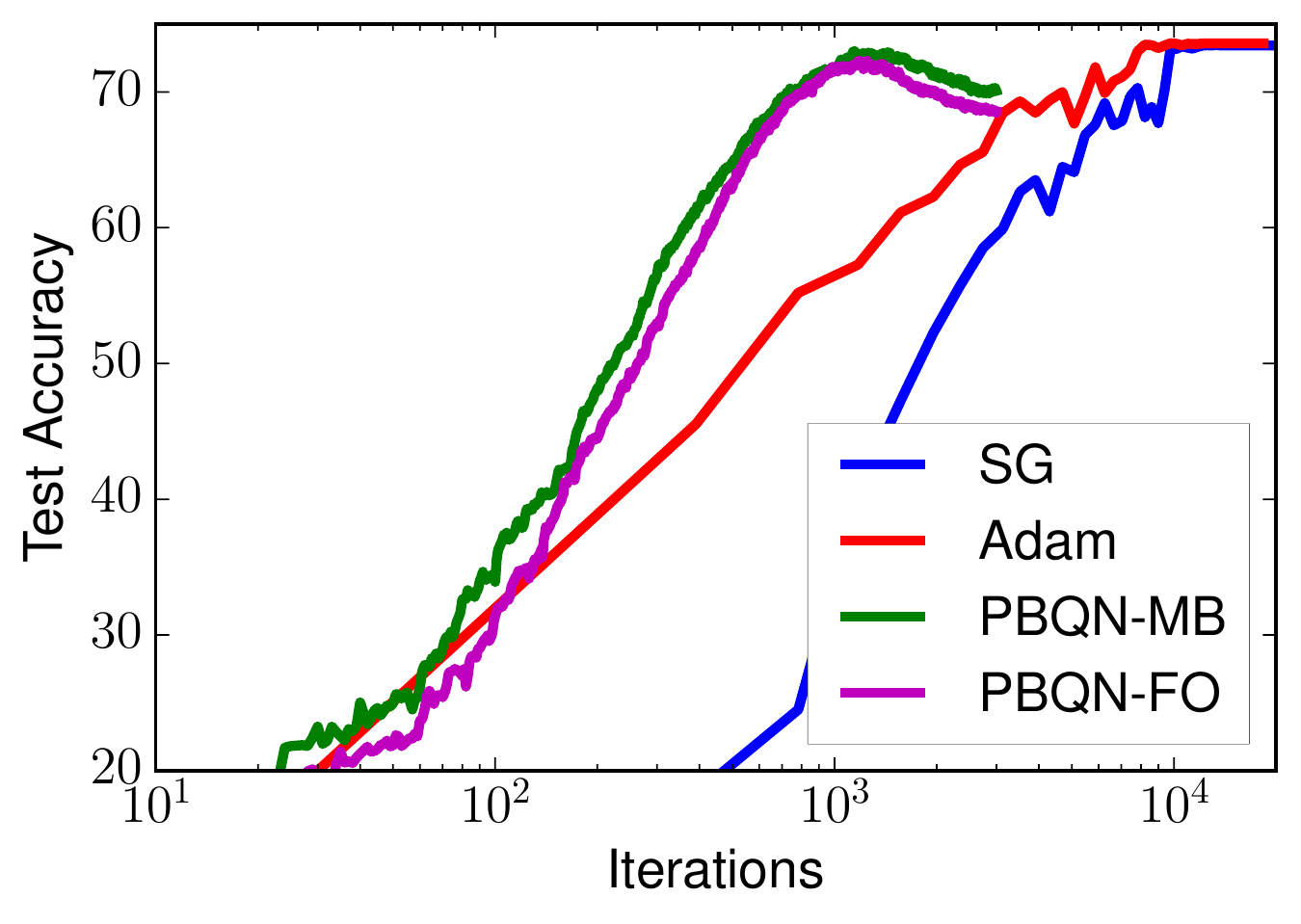}
\includegraphics[width=0.24\linewidth]{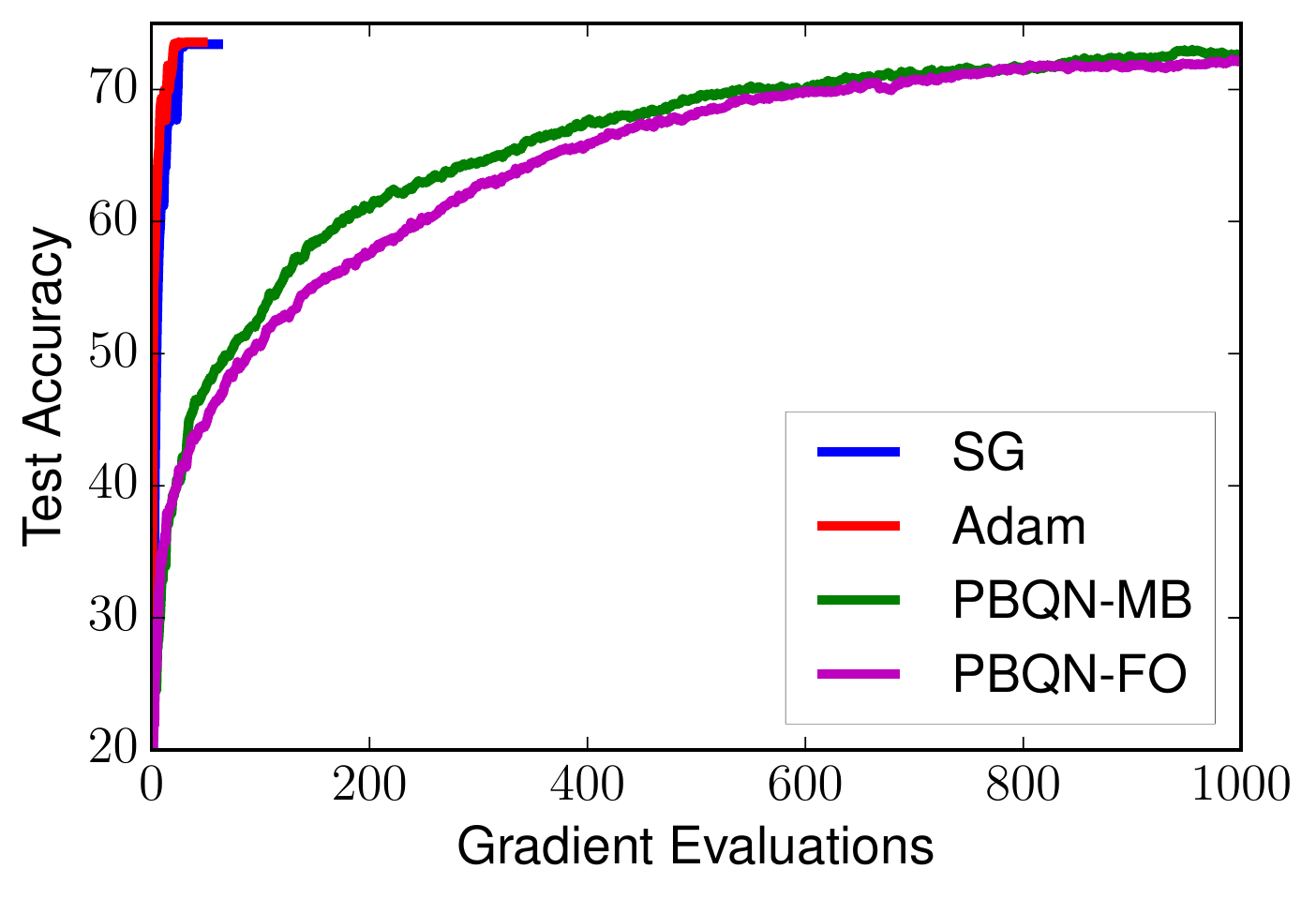}
	\caption{\textbf{CIFAR-10 AlexNet $(\mathcal{A}_2)$:} Performance of the progressive batching L-BFGS methods, with multi-batch (25\% overlap) and full-overlap approaches, and the SG and Adam methods. The best results for L-BFGS are achieved with $\theta = 0.9$.}
\label{exp:cifar10_alexnet}
\end{centering}
\end{figure*}

\begin{figure*}[!htp]
\begin{centering}
	\includegraphics[width=0.24\linewidth]{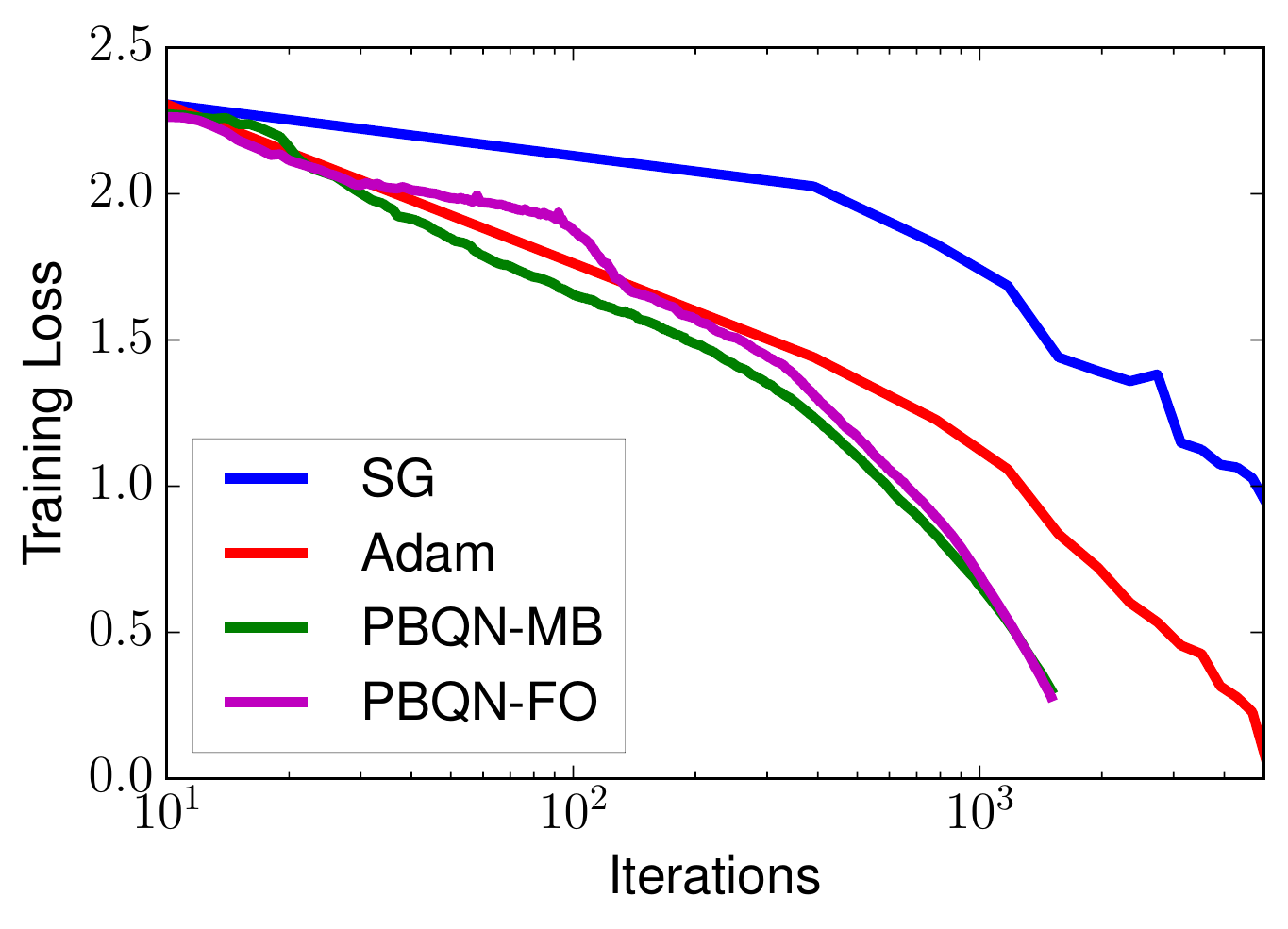}
	\includegraphics[width=0.24\linewidth]{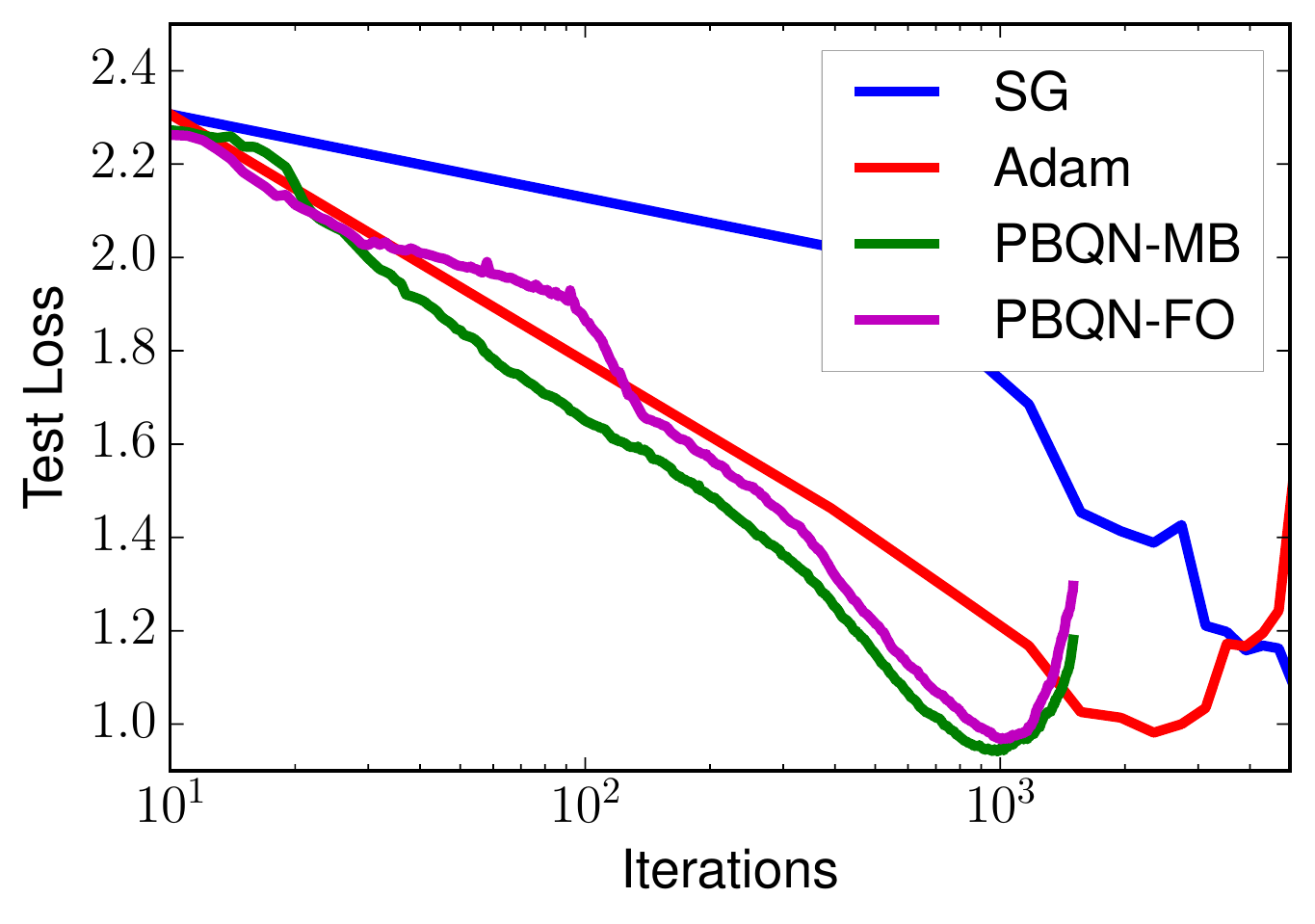}
	\includegraphics[width=0.24\linewidth]{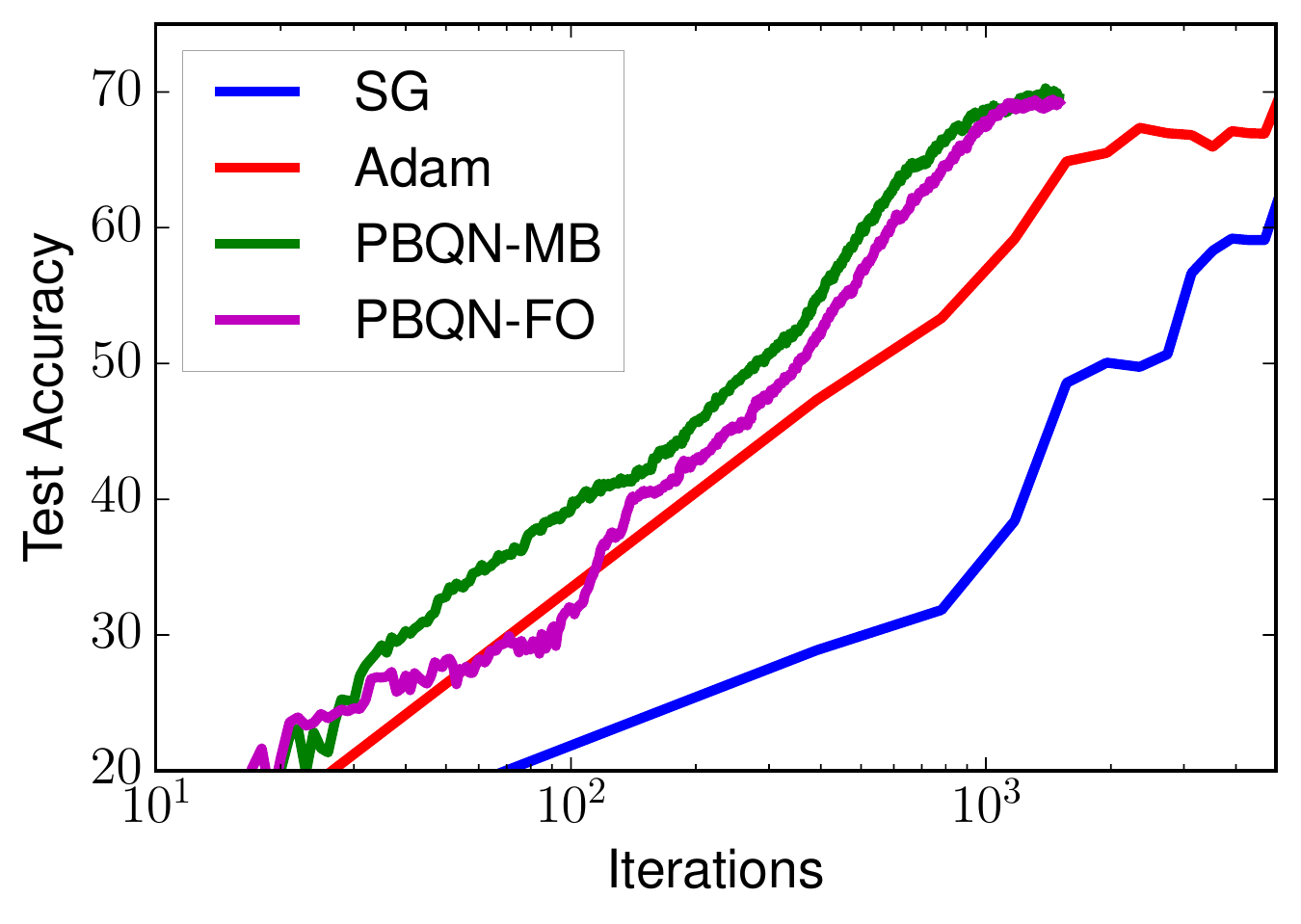}
	\includegraphics[width=0.24\linewidth]{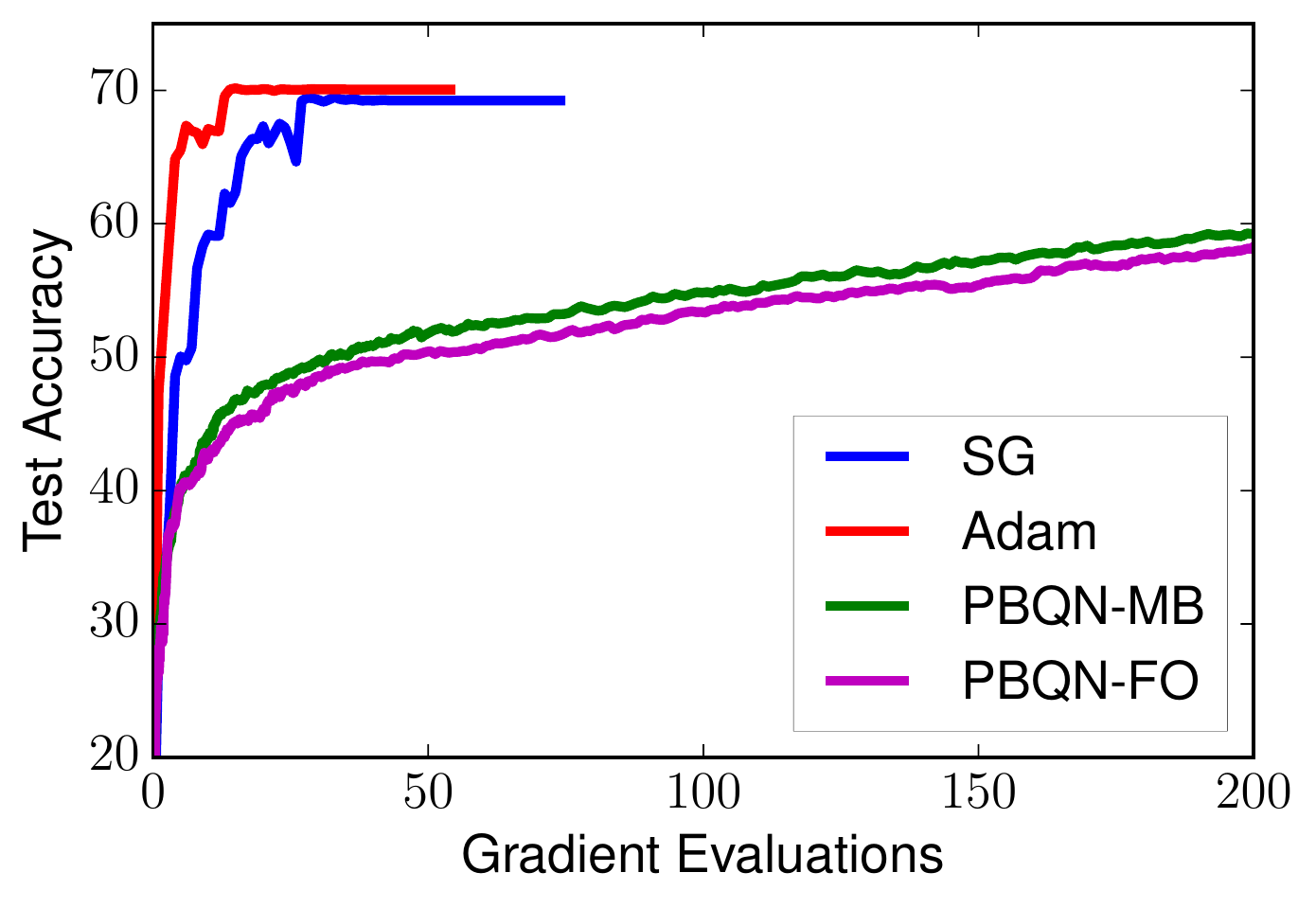}
	\caption{\textbf{CIFAR-10 ResNet18 $(\mathcal{R})$:} Performance of the progressive batching L-BFGS methods, with multi-batch (25\% overlap) and full-overlap approaches, and the SG and Adam methods. The best results for L-BFGS are achieved with $\theta = 2$.}
	\label{exp:cifar10_resnet18}
	\end{centering}
\end{figure*}

We observe from our results that the PBQN method achieves a similar test accuracy as SG and Adam, but requires more gradient evaluations. Improvements in performance can be obtained by ensuring that the PBQN method exerts a finer control on the sample size in the small batch regime --- something that requires further investigation. Nevertheless, the small number of iterations required by the PBQN method,  together with the fact that it employs larger batch sizes than SG during much of the run, suggests that a distributed version similar to a data-parallel distributed implementation of the SG method \cite{chen2016revisiting,das2016distributed} would lead to a highly competitive method.  

Similar to the logistic regression case, we observe that the steplength computed via \eqref{eq: initial step length} is almost always accepted by the Armijo condition, and typically lies within $(0.1, 1)$. 
Once the algorithm has trained for a significant number of iterations using full-batch, the algorithm begins to overfit on the training set, resulting in worsened test loss and accuracy, as observed in the graphs.

\section{Final Remarks}
\label{sec:finalr}

Several types of quasi-Newton methods have been proposed in the literature to address the challenges arising in machine learning. Some of these method operate in the purely stochastic setting (which makes quasi-Newton updating difficult) or in the purely batch regime (which leads to generalization problems). We believe that progressive batching is the right context for designing an L-BFGS method that has good generalization properties, does not expose any free parameters, and has fast convergence. The advantages of our approach are clearly seen in logistic regression experiments. To make the new method competitive with SG and Adam for deep learning, we need to improve several of its components. This includes the design of a more robust progressive batching mechanism,  the redesign of batch normalization and dropout heuristics to improve the generalization performance of our method for training larger networks, and most importantly, the design of a parallelized implementation that takes advantage of the higher granularity of each iteration. We believe that the potential of the proposed approach as an alternative to SG for deep learning is worthy of further investigation.

	
	\section*{Acknowledgements}

We thank Albert Berahas for his insightful comments regarding multi-batch L-BFGS and probabilistic line searches, as well as for his useful feedback on earlier versions of the manuscript. We also thank the anonymous reviewers for their useful feedback. Bollapragada is supported by DOE award DE-FG02-87ER25047. Nocedal is supported by NSF award DMS-1620070. Shi is supported by Intel grant SP0036122.

	\bibliographystyle{icml2018}
	\bibliography{references_arxiv}
	\onecolumn
	\appendix
	
	\section{Initial Step Length Derivation}

To establish our results, recall that the stochastic quasi-Newton method is defined as
\begin{equation} \label{iter}
x_{k+1} = x_k - \alpha_k H_k g_k^{S_k},
\end{equation}
where the batch (or subsampled) gradient is given by
\begin{equation}   \label{batch}
g_k^{S_k} = \nabla F_{S_k}(x_k) = \frac{1}{|S_k|}\sum_{i \in S_k} \nabla F_i (x_k),
\end{equation}
and the set $S_k \subset \{1,2,\cdots\}$ indexes data points $(y^i, z^i)$. The algorithm selects the Hessian approximation $H_k$ through quasi-Newton updating prior to selecting the new sample $S_k$ to define the search direction $p_k$. We will use $\E_k$ to denote the conditional expectation at $x_k$ and use $\E$ to denote the total expectation. 

The primary theoretical mechanism for determining batch sizes is the exact variance inner product quasi-Newton (IPQN) test, which is defined as
\begin{equation}\label{IPgrad: exacttest}
\frac{\E_k \left[\left((H_k\nabla F(x_k))^T (H_k g_k^{i}) -  \|H_k\nabla 
F(x_k)\|^2\right)^2\right]}{|S_k|} \leq \theta^2  \|H_k\nabla F(x_k)\|^4.
\end{equation} 

We establish the inequality  used to determine the initial steplength $\alpha_k$ for the stochastic line search.
\begin{lem}
Assume that $F$ is continuously differentiable with Lipschitz continuous gradient with Lipschitz constant $L$. Then
\begin{equation*}
\E_k \left[ F(x_{k+1}) \right] \leq F(x_k) - \alpha_k \nabla F(x_k)^T H_k^{1/2} W_k H_k^{1/2}\nabla F(x_k) ,\label{sequoia}
\end{equation*}
where 
$$W_k = \left(I - \frac{L\alpha_k}{2}\left(1 +  \frac{{\rm Var}\{H_k g_k^{i}\}}{|S_k| \|H_k\nabla F(x_k)\|^2}\right) H_k \right),$$
and $\Var\{H_k g_k^{i}\} = \E_k\left[\|H_k g_k^{i} - H_k \nabla F(x_k)\|^2\right]$.
\end{lem}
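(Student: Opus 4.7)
The plan is to combine the classical descent lemma with a variance decomposition of the stochastic gradient, and then rearrange algebraically so that the resulting bound has the factored $H_k^{1/2} W_k H_k^{1/2}$ structure claimed in the statement.

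First I would apply the standard Lipschitz-gradient (descent) inequality to the iterate $x_{k+1} = x_k - \alpha_k H_k g_k^{S_k}$:
\begin{equation*}
F(x_{k+1}) \le F(x_k) - \alpha_k \nabla F(x_k)^T H_k g_k^{S_k} + \frac{L \alpha_k^2}{2}\,\|H_k g_k^{S_k}\|^2.
\end{equation*}
I would then take the conditional expectation $\E_k[\,\cdot\,]$ of both sides. Since the samples in $S_k$ are drawn so that $g_k^{S_k}$ is an unbiased estimator of $\nabla F(x_k)$, the linear term simplifies to $-\alpha_k \nabla F(x_k)^T H_k \nabla F(x_k)$. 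The key non-trivial step is handling the quadratic term; here I would use the bias-variance decomposition
\begin{equation*}
\E_k\!\left[\|H_k g_k^{S_k}\|^2\right] = \|H_k \nabla F(x_k)\|^2 + \E_k\!\left[\|H_k g_k^{S_k} - H_k \nabla F(x_k)\|^2\right],
\end{equation*}
together with the fact that the variance of the batch mean contracts by a factor of $|S_k|$, i.e.\ $\E_k[\|H_k g_k^{S_k} - H_k \nabla F(x_k)\|^2] = \Var\{H_k g_k^i\}/|S_k|$, which follows from the i.i.d.\ sampling and linearity of $H_k$.

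Combining these two observations gives
\begin{equation*}
\E_k[F(x_{k+1})] \le F(x_k) - \alpha_k \nabla F(x_k)^T H_k \nabla F(x_k) + \frac{L\alpha_k^2}{2}\left(\|H_k \nabla F(x_k)\|^2 + \frac{\Var\{H_k g_k^i\}}{|S_k|}\right).
\end{equation*}
The last step is purely algebraic: I would pull out a factor of $\|H_k \nabla F(x_k)\|^2 = \nabla F(x_k)^T H_k^{1/2}\,H_k\,H_k^{1/2} \nabla F(x_k)$ from the second-order term so that the variance contribution appears as a multiplicative correction $(1 + \Var\{H_k g_k^i\}/(|S_k|\|H_k \nabla F(x_k)\|^2))$ on $H_k$, while the linear term is rewritten as $\nabla F(x_k)^T H_k^{1/2}\,I\,H_k^{1/2} \nabla F(x_k)$. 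Matching the two pieces under a common $H_k^{1/2}(\cdot)H_k^{1/2}$ sandwich yields exactly the matrix $W_k$ in the statement.

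I do not anticipate a serious obstacle; this lemma is essentially a careful bookkeeping of the descent lemma in the stochastic setting. The only mildly delicate point is the algebraic rearrangement in the final step, which requires handling the degenerate case $\|H_k \nabla F(x_k)\| = 0$ separately (in that case both sides of the claim reduce to $F(x_k)$ since $\nabla F(x_k)$ lies in the kernel of the positive semidefinite factor $H_k^{1/2}$), so the factored form still holds trivially.
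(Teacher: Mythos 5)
Your proposal is correct and follows essentially the same route as the paper: the Lipschitz descent inequality, conditional expectation with unbiasedness of $g_k^{S_k}$, the bias--variance decomposition of $\E_k[\|H_k g_k^{S_k}\|^2]$, the $1/|S_k|$ contraction of the batch-mean variance, and the final factoring into the $H_k^{1/2} W_k H_k^{1/2}$ sandwich. The only cosmetic difference is that the paper states the variance-contraction step as an inequality rather than an equality, which is immaterial to the conclusion.
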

\begin{proof}
By Lipschitz continuity of the gradient, we have that
\begin{align}
\E_k \left[ F(x_{k+1}) \right] & \leq F(x_k) - \alpha_k \nabla F(x_k)^T H_k \E_k\left[g_k^{S_k}\right] + \frac{L\alpha_k^2}{2}\E_k\left[\|H_k g_k^{S_k}\|^2\right] \nonumber \\
& = F(x_k) - \alpha_k \nabla F(x_k)^T H_k \nabla F(x_k) + \frac{L\alpha_k^2}{2} \left( \|H_k \nabla F(x_k)\|^2 + \E_k\left[\|H_k g_k^{S_k} - H_k \nabla F(x_k)\|^2\right] \right)\nonumber \\
& \leq F(x_k) - \alpha_k \nabla F(x_k)^T H_k \nabla F(x_k) + \frac{L\alpha_k^2}{2} \left( \|H_k \nabla F(x_k)\|^2 +\frac{{\rm Var}\{H_k g_k^{i}\}}{|S_k|\|H_k\nabla F(x_k)\|^2}\|H_k \nabla F(x_k)\|^2 \right) \nonumber \\
& = F(x_k) - \alpha_k \nabla F(x_k)^T H_k^{1/2}\left(I - \frac{L\alpha_k}{2}\left(1 +  \frac{{\rm Var}\{H_k g_k^{i} \}}{|S_k| \|H_k\nabla F(x_k)\|^2}\right)H_k\right) H_k^{1/2} \nabla F(x_k) \nonumber\\
& = F(x_k) - \alpha_k \nabla F(x_k)^T H_k^{1/2} W_k H_k^{1/2} \nabla F(x_k) .\nonumber
\end{align}
\end{proof}

\section{Convergence Analysis}  \label{sec:or}

For the rest of our analysis, we make the following two assumptions.
\begin{assum}\label{assum: orth}
The orthogonality condition is satisfied for all $k$, i.e.,
\begin{equation} \label{orth-i}
\frac{\E_k \left[\left\|H_k g_k^i - \frac{(H_k g_k^i)^T (H_k\nabla F(x_k))}{\|H_k\nabla F(x_k)\|^2}H_k\nabla F(x_k)\right\|^2\right]}{|S_k|} \leq \nu^2 \|H_k\nabla F(x_k)\|^2,
\end{equation} 
for some large $\nu > 0$.
\end{assum}

\begin{assum}\label{assum: eigs}
The eigenvalues of $H_k$ are contained in an interval in $\mathbb{R}^+$, i.e., for all $k$ there exist constants $\Lambda_2 \geq \Lambda_1 > 0$ such that
\begin{equation}
\Lambda_1 I \preceq H_k \preceq \Lambda_2 I.
\end{equation}
\end{assum}

Condition \eqref{orth-i} ensures that the stochastic quasi-Newton direction is bounded away from orthogonality to $-H_k \nabla F(x_k)$, with high probability, and prevents the variance in the individual quasi-Newton directions to be too large relative to the variance in the individual quasi-Newton directions along $-H_k \nabla F(x_k)$. Assumption \ref{assum: eigs} holds, for example, when $F$ is convex and a regularization parameter is included so that any subsampled Hessian $\nabla^2 F_{S}(x)$ is positive definite. It can also be shown to hold in the non-convex case by applying cautious BFGS updating; e.g. by updating $H_k$ only when $y_k^T s_k \geq \epsilon \|s_k\|_2^2$
where $\epsilon > 0$ is a predetermined constant \cite{berahas2016multi}.

We begin by establishing a technical descent lemma.

\begin{lem} \label{c-thmlin} 
	 Suppose that $F$ is twice continuously differentiable and that there exists a constant $L > 0$ such that 
	\begin{equation}  \label{c-lip}
	 \nabla^2 F(x) \preceq L I, \quad \forall x \in \R^d.
	\end{equation} 
	Let $\{x_k\}$ be generated by iteration \eqref{iter} for any $x_0$, 
	where $|S_{k}|$ is chosen by the (exact variance) inner product quasi-Newton test \eqref{IPgrad: exacttest} for given constant $\theta > 0$ and suppose that assumptions \eqref{assum: orth} and \eqref{assum: eigs} hold. Then, for any $k$,
	\begin{align}\label{c-sample-norm}
	\E_k \left[\|H_k g_k^{S_k}\|^2\right] 
	& \leq(1 + \theta^2 + \nu^2)\|H_k\nabla F(x_k)\|^2 .
	\end{align}
	Moreover, if $\alpha_k$ satisfies 
	\begin{equation}   \label{c-stepform}
	\alpha_k = \alpha \leq \frac{1}{(1 + \theta^2+\nu^2)L\Lambda_2},
	\end{equation}
	we have that 
	\begin{align} 
	\E_k[F(x_{k+1})] & \leq F(x_k)  - \frac{\alpha}{2} \|H_k^{1/2}\nabla F(x_k)\|^2   \label{c-lin-ineq} .
	\end{align}  
\end{lem}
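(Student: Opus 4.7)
The plan is to prove the two conclusions in order, since the descent inequality \eqref{c-lin-ineq} follows directly from the variance bound \eqref{c-sample-norm} combined with Lipschitz continuity.

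For \eqref{c-sample-norm}, I would decompose the stochastic quasi-Newton direction along the true quasi-Newton direction and its orthogonal complement. Writing $u = H_k \nabla F(x_k)$ and $v = H_k g_k^{S_k}$, set
\[
v = \frac{v^T u}{\|u\|^2} u + w, \qquad w \perp u.
\]
By the Pythagorean identity,
\[
\|v - u\|^2 = \frac{(v^T u - \|u\|^2)^2}{\|u\|^2} + \|w\|^2.
\]
The i.i.d.\ sampling gives $\E_k[(v^T u - \|u\|^2)^2] = \E_k[(v_i^T u - \|u\|^2)^2]/|S_k|$, which is controlled by $\theta^2 \|u\|^4$ via the IPQN test \eqref{IPgrad: exacttest}. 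Similarly, each $w_i$ has mean zero (a direct check using $\E_k[v_i] = u$), so $\E_k[\|w\|^2] = \E_k[\|w_i\|^2]/|S_k|$, which is bounded by $\nu^2 \|u\|^2$ by the orthogonality assumption \eqref{orth-i}. Adding these yields $\E_k[\|v - u\|^2] \leq (\theta^2 + \nu^2)\|u\|^2$. Since $v$ has mean $u$, we have $\E_k[\|v\|^2] = \|u\|^2 + \E_k[\|v - u\|^2]$, giving \eqref{c-sample-norm}.

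For \eqref{c-lin-ineq}, I would apply the standard descent lemma consequence of \eqref{c-lip}:
\[
F(x_{k+1}) \leq F(x_k) - \alpha_k \nabla F(x_k)^T H_k g_k^{S_k} + \frac{L \alpha_k^2}{2} \|H_k g_k^{S_k}\|^2.
\]
Taking $\E_k$, using $\E_k[g_k^{S_k}] = \nabla F(x_k)$ to collapse the linear term into $-\alpha_k \|H_k^{1/2}\nabla F(x_k)\|^2$, and using \eqref{c-sample-norm} on the quadratic term, I would then convert $\|H_k \nabla F(x_k)\|^2$ into $\|H_k^{1/2} \nabla F(x_k)\|^2$ via the eigenvalue bound $H_k \preceq \Lambda_2 I$. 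This gives
\[
\E_k[F(x_{k+1})] \leq F(x_k) - \alpha_k\!\left(1 - \tfrac{L \alpha_k (1+\theta^2+\nu^2)\Lambda_2}{2}\right)\|H_k^{1/2} \nabla F(x_k)\|^2,
\]
and the step size restriction \eqref{c-stepform} makes the parenthesized factor at least $1/2$.

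The only mildly delicate step is the orthogonal decomposition in part one, specifically verifying that the two pieces of $v$ decouple correctly in expectation — in particular that the orthogonal residual $w_i$ indeed has zero mean so that its squared norm averages down by a factor $1/|S_k|$ like a variance. Everything else is an application of Lipschitz-based descent, the uniform spectral bound on $H_k$, and careful bookkeeping between $\|H_k^{1/2}\nabla F\|^2$ and $\|H_k \nabla F\|^2$.
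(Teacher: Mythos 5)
Your proposal is correct and follows essentially the same route as the paper: the orthogonal split of $H_k g_k^{S_k}$ into its component along $H_k\nabla F(x_k)$ (controlled by the IPQN test) and the perpendicular residual (controlled by the orthogonality condition) is exactly the paper's decomposition, just written in centered/Pythagorean form rather than by expanding $\E_k[\|H_kg_k^{S_k}\|^2] - \E_k[((H_kg_k^{S_k})^T H_k\nabla F(x_k))^2]/\|H_k\nabla F(x_k)\|^2$ and rearranging. The second part (descent lemma, unbiasedness, the bound \eqref{c-sample-norm}, the spectral bound $H_k \preceq \Lambda_2 I$, and the steplength restriction yielding the factor $\tfrac12$) matches the paper's argument step for step.
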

\begin{proof} By Assumption \eqref{assum: orth}, the orthogonality condition, we have that
	\begin{align} 
		 \E_k \left[\left\|H_k g_k^{S_k} -  \frac{(H_k g_k^{S_k})^T (H_k\nabla F(x_k))}{\|H_k\nabla F(x_k)\|^2}H_k\nabla F(x_k)\right\|^2\right] 
		&\leq \frac{\E_k \left[\left\|H_k g_k^i - \frac{(H_k g_k^i)^T (H_k\nabla F(x_k))}{\|H_k\nabla F(x_k)\|^2}H_k\nabla F(x_k)\right\|^2\right]}{|S_k|} \label{eq:lhs_inequality}\\
		&\leq \nu^2 \,\|H_k\nabla F(x_k)\|^2. \nonumber
	\end{align}
	Now, expanding the left hand side of inequality \eqref{eq:lhs_inequality}, we get
	\begin{align} \nonumber
	& \E_k \left[\left\|H_k g_k^{S_k} -  \frac{(H_k g_k^{S_k})^T (H_k\nabla F(x_k))}{\|H_k\nabla F(x_k)\|^2}H_k\nabla F(x_k)\right\|^2\right] \nonumber \\
	& ~ = ~ \E_k \left[\|H_k g_k^{S_k}\|^2\right] -\frac{2 \E_k \left[\left( (H_k g_k^{S_k})^T (H_k\nabla F(x_k))\right)^2\right]}{\|H_k\nabla F(x_k)\|^2} 
	 + \frac{\E_k \left[\left( (H_k g_k^{S_k})^T (H_k\nabla F(x_k))\right)^2\right]}{\|H_k\nabla F(x_k)\|^2} \nonumber\\
	& ~ = ~ \E_k \left[\|H_k g_k^{S_k} \|^2\right] - \frac{ \E_k \left[\left( (H_k g_k^{S_k})^T (H_k\nabla F(x_k))\right)^2\right]}{\|H_k\nabla F(x_k)\|^2}  \nonumber \\
	&  ~ \leq ~ \nu^2 \,\|H_k\nabla F(x_k)\|^2. \nonumber
	\end{align}
	Therefore, rearranging gives the inequality
	\begin{equation}\label{step-two-term}
	\E_k \left[\|H_k g_k^{S_k}\|^2\right] \leq \frac{ \E_k \left[\left( (H_k g_k^{S_k})^T (H_k\nabla F(x_k)) \right)^2\right]}{\|H_k\nabla F(x_k)\|^2} + \nu^2 \|H_k\nabla F(x_k)\|^2.
	\end{equation}
	To bound the first term on the right side of this inequality, we use the inner product quasi-Newton test; in particular, $|S_k|$  satisfies 
	\begin{align}
	\E_k \left[ \left( (H_k \nabla F(x_k))^T (H_k g_k^{S_k})) - \| H_k \nabla F(x_k) \|^2 \right)^2 \right] \nonumber & \leq \frac{ \E_k \left[\left((H_k\nabla F(x_k))^T (H_k g_k^{i}) -  \|H_k\nabla F(x_k)\|^2\right)^2\right]}{|S_k|} \nonumber \\
	& \leq ~ \theta^2  \|H_k\nabla F(x_k)\|^4, \label{eq:theta_bound}
	\end{align}
	where the second inequality holds by the IPQN test. Since
	\begin{equation}\label{eq:eq:decomposition}
	\E_k \left[ \left( (H_k \nabla F(x_k))^T (H_k g_k^{S_k}) - \| H_k \nabla F(x_k) \|^2 \right)^2 \right] = \E_k \left[ \left( (H_k \nabla F(x_k))^T (H_k g_k^{S_k}) \right)^2 \right] - \|H_k \nabla F(x_k) \|^4,
	\end{equation}
	we have
	\begin{align} 
	\E_k \left[\left( (H_k g_k^{S_k})^T (H_k\nabla F(x_k)) \right)^2\right] & \leq  \|H_k\nabla F(x_k)\|^4 + \theta^2 \|H_k\nabla F(x_k)\|^4 \nonumber\\
	&= (1 + \theta^2) \|H_k\nabla F(x_k)\|^4, \label{eq:1+theta_bound}
	\end{align}
	by \eqref{eq:theta_bound} and \eqref{eq:eq:decomposition}.
	Substituting \eqref{eq:1+theta_bound} into \eqref{step-two-term}, we get the following bound on the length of the search direction:
	\begin{align*}
	\E_k \left[\|H_k g_k^{S_k}\|^2\right] 
	& \leq(1 + \theta^2 + \nu^2)\|H_k\nabla F(x_k)\|^2 ,
	\end{align*}
	which proves \eqref{c-sample-norm}.
	Using this inequality, Assumption \ref{assum: eigs}, and bounds on the Hessian and steplength \eqref{c-lip} and \eqref{c-stepform}, we have
	\begin{align} 
	\E_k [F(x_{k+1}) ] & \leq F(x_k)  - \E_k \left[\alpha (H_k g_k^{S_k})^T\nabla F(x_k)\right] + \E_k \left[\frac{L\alpha^2}{2} \|H_k g_k^{S_k}\|^2\right] \nonumber\\
	& = F(x_k)  -\alpha \nabla F(x_k)^TH_k\nabla F(x_k) + \frac{L\alpha^2}{2} \E_k [\|H_k g_k^{S_k} \|^2] \nonumber\\
	&\leq F(x_k)  - \alpha \nabla F(x_k)^TH_k\nabla F(x_k) +\frac{L \alpha^2}{2} (1 + \theta^2+\nu^2)\|H_k\nabla F(x_k)\|^2\nonumber\\	
	& = F(x_k)  - \alpha (H_k^{1/2}\nabla F(x_k))^T\left(I - \frac{L\alpha(1 + \theta^2 + \nu^2)}{2}H_k\right)H_k^{1/2}\nabla F(x_k) \nonumber\\
	& \leq F(x_k)  - \alpha \left(1 - \frac{L \Lambda_2 \alpha (1 + \theta^2 + \nu^2)}{2} \right) \|H_k^{1/2}\nabla F(x_k)\|^2 \nonumber \\
	&\leq F(x_k) - \frac{\alpha}{2} \|H_k^{1/2}\nabla F(x_k)\|^2  . \nonumber
	\end{align}
\end{proof}

We now show that the stochastic quasi-Newton iteration \eqref{iter} with a fixed steplength $\alpha$ is linearly convergent when $F$ is strongly convex. In the following discussion, $x^*$ denotes the  minimizer of $F$.

\begin{thm} \label{thmlin} 
	Suppose that $F$ is twice continuously differentiable and that there exist constants $0 < \mu \leq L$ such that 
	\begin{equation}
	\mu I   \preceq \nabla^2 F(x) \preceq L I, \quad \forall x \in \R^d.
	\end{equation} 
	Let $\{x_k\}$ be generated by iteration \eqref{iter}, for any $x_0$, 
	where $|S_{k}|$ is chosen by the (exact variance) inner product quasi-Newton test \eqref{IPgrad: exacttest} and suppose that the assumptions \eqref{assum: orth} and \eqref{assum: eigs} hold. Then, if $\alpha_k$ satisfies \eqref{c-stepform}
	we have that   
	\begin{equation} \label{linear}
	\E[F(x_k) - F(x^*)] \leq \rho^k (F(x_0) - F(x^*)),
	\end{equation}
where  $x^*$ denotes the  minimizer of $F$, and
$
\rho= 1 -\mu \Lambda_1\alpha .
$

\end{thm}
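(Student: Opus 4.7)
The plan is to bootstrap directly from the descent inequality already established in Lemma~\ref{c-thmlin}, and then combine it with strong convexity to extract a contraction in the suboptimality $F(x_k) - F(x^*)$. Since the previous lemma required only that $\nabla^2 F \preceq LI$, it applies verbatim under the current (stronger) hypotheses, and the step length condition \eqref{c-stepform} is exactly what makes that lemma fire.

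First I would invoke Lemma~\ref{c-thmlin} to obtain
\begin{equation*}
\E_k[F(x_{k+1})] \leq F(x_k) - \tfrac{\alpha}{2}\|H_k^{1/2}\nabla F(x_k)\|^2.
\end{equation*}
Next I would use the lower eigenvalue bound from Assumption~\ref{assum: eigs}, namely $H_k \succeq \Lambda_1 I$, to replace $\|H_k^{1/2}\nabla F(x_k)\|^2$ by $\Lambda_1 \|\nabla F(x_k)\|^2$. Then, the standard consequence of $\mu$-strong convexity,
\begin{equation*}
\|\nabla F(x_k)\|^2 \geq 2\mu (F(x_k) - F(x^*)),
\end{equation*}
converts the right-hand side into a contraction factor on the suboptimality gap.

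Subtracting $F(x^*)$ from both sides and rearranging would then give
\begin{equation*}
\E_k[F(x_{k+1}) - F(x^*)] \leq (1 - \mu \Lambda_1 \alpha)(F(x_k) - F(x^*)) = \rho \, (F(x_k) - F(x^*)).
\end{equation*}
Finally, taking total expectation on both sides (using the tower property) and recursing from $k$ down to $0$ yields the desired bound \eqref{linear}. One should briefly verify that $\rho \in (0,1)$: the upper bound on $\alpha$ in \eqref{c-stepform}, combined with $\mu \leq L$ and $\Lambda_1 \leq \Lambda_2$, guarantees $\mu \Lambda_1 \alpha \leq 1/(1+\theta^2+\nu^2) < 1$, so $\rho \in (0,1)$ as required.

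There is really no substantive obstacle here; the heavy lifting — handling the IPQN variance control, the orthogonality condition, and the interaction with the quasi-Newton eigenvalue bounds — was already absorbed into Lemma~\ref{c-thmlin}. The only thing to be careful about is making sure the conditional-expectation recursion is converted to a total-expectation recursion cleanly (so the bound telescopes to $\rho^k$), and that the assumed bound on $\alpha$ is indeed compatible with the strong-convexity parameter $\mu$ so that $\rho < 1$.
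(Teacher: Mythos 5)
Your proposal is correct and follows essentially the same route as the paper's own proof: apply the descent inequality \eqref{c-lin-ineq} from Lemma~\ref{c-thmlin}, lower-bound $\|H_k^{1/2}\nabla F(x_k)\|^2$ by $\Lambda_1\|\nabla F(x_k)\|^2$ via Assumption~\ref{assum: eigs}, invoke $\|\nabla F(x_k)\|^2 \geq 2\mu(F(x_k)-F(x^*))$, and take total expectation to telescope. Your added check that $\rho\in(0,1)$ is a small but valid refinement the paper leaves implicit.
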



\begin{proof} It is well-known \cite{bertsekas2003convex} that for strongly convex functions,   
\[
\|\nabla F(x_k)\|^2 \geq 2\mu [F(x_k) - F(x^*)].
\]
Substituting this into \eqref{c-lin-ineq} and subtracting $F(x^*)$ from both sides and using Assumption \ref{assum: eigs}, we obtain
\begin{align}
\E_k[F(x_{k+1}) - F(x^*)] &\leq  F(x_k) - F(x^*) - \frac{\alpha}{2}\|H_k^{1/2}\nabla F(x_k)\|^2 \nonumber \\
&\leq F(x_k) - F(x^*) - \frac{\alpha}{2}\Lambda_1\|\nabla F(x_k)\|^2 \nonumber\\
&\leq  (1 -\mu \Lambda_1 \alpha ) (F(x_k) - F(x^*)). \nonumber
\end{align}
The theorem follows from taking total expectation.

\end{proof}

We now consider the case when $F$ is nonconvex and bounded below.

\begin{thm} \label{thmsublin}
	Suppose that $F$ is twice continuously 
	differentiable and bounded below, and that there exists a constant $ L > 0$ such that 
	\begin{equation}
	\nabla^2 F(x) \preceq L I, \quad \forall x \in \R^d.
	\end{equation} 
	Let $\{x_k\}$ be generated by iteration \eqref{iter}, for any $x_0$, 
	where $|S_{k}|$ is chosen by 
	the (exact variance) inner product quasi-Newton test \eqref{IPgrad: exacttest} and suppose that the assumptions \eqref{assum: orth} and \eqref{assum: eigs} hold. Then, if $\alpha_k$ satisfies
	\eqref{c-stepform},
	%
	we have
	\begin{equation}   \label{convergence}
	\lim_{k \rightarrow \infty} \E [\|\nabla F(x_k)\|^2] \rightarrow 0.
	\end{equation}  
	Moreover, for any positive integer $T$ we have that 
	\begin{align*}
	\min_{0\leq k \leq T-1} \E [\|\nabla F(x_k)\|^2] &\leq \frac{2}{\alpha 
		T\Lambda_1}  (F(x_0) - F_{min}),
	\end{align*}
	where $F_{min}$ is a lower bound on  $F$ in $\mathbb{R}^d$.
\end{thm}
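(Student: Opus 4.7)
The proof will flow directly from the per-iteration descent inequality \eqref{c-lin-ineq} already established in Lemma \ref{c-thmlin}. The plan is to lower-bound $\|H_k^{1/2}\nabla F(x_k)\|^2$ using Assumption \ref{assum: eigs}, take total expectations, telescope, and then use boundedness of $F$ from below to conclude.

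First I would apply the eigenvalue bound $\Lambda_1 I \preceq H_k$ from Assumption \ref{assum: eigs} to \eqref{c-lin-ineq}, giving
\begin{equation*}
\E_k[F(x_{k+1})] \leq F(x_k) - \frac{\alpha \Lambda_1}{2}\|\nabla F(x_k)\|^2.
\end{equation*}
Taking total expectation and rearranging yields
\begin{equation*}
\frac{\alpha \Lambda_1}{2}\E[\|\nabla F(x_k)\|^2] \leq \E[F(x_k)] - \E[F(x_{k+1})].
\end{equation*}
Next, I would sum this telescoping inequality from $k=0$ to $T-1$ and use the lower bound $F(x_T) \geq F_{\min}$ to obtain
\begin{equation*}
\sum_{k=0}^{T-1} \E[\|\nabla F(x_k)\|^2] \leq \frac{2}{\alpha \Lambda_1}\bigl(F(x_0) - F_{\min}\bigr).
\end{equation*}
Bounding the minimum by the average then gives the finite-horizon complexity bound
\begin{equation*}
\min_{0\leq k\leq T-1}\E[\|\nabla F(x_k)\|^2] \leq \frac{1}{T}\sum_{k=0}^{T-1}\E[\|\nabla F(x_k)\|^2] \leq \frac{2}{\alpha T \Lambda_1}\bigl(F(x_0) - F_{\min}\bigr),
\end{equation*}
which is the second claim of the theorem.

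For the asymptotic statement \eqref{convergence}, I would let $T\to\infty$ in the preceding bound to conclude that the nonnegative series $\sum_{k=0}^{\infty}\E[\|\nabla F(x_k)\|^2]$ is convergent, and therefore its general term must tend to zero. This yields $\lim_{k\to\infty}\E[\|\nabla F(x_k)\|^2]=0$.

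Honestly, the real work has already been absorbed into Lemma \ref{c-thmlin}, where the IPQN test \eqref{IPgrad: exacttest} and the orthogonality condition \eqref{orth-i} are used to control $\E_k[\|H_k g_k^{S_k}\|^2]$ in terms of $\|H_k\nabla F(x_k)\|^2$, and where the steplength condition \eqref{c-stepform} is used to absorb the noise term. Given that lemma, no further probabilistic argument is required here: the remaining step is a standard telescoping-sum argument identical to the one used for deterministic gradient descent on nonconvex functions, the only subtlety being the need to take total expectations before summing. The main "obstacle," if any, is simply verifying that the Cauchy/tail argument for a convergent nonnegative series is what delivers $\E[\|\nabla F(x_k)\|^2]\to 0$ rather than merely $\liminf$; since all terms are nonnegative and their partial sums are bounded, this is immediate.
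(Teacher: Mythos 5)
Your proposal is correct and follows essentially the same route as the paper's proof: both take total expectation of the descent inequality \eqref{c-lin-ineq} from Lemma \ref{c-thmlin}, telescope, invoke the lower bound $F_{\min}$, bound the minimum by the average, and deduce \eqref{convergence} from the convergence of the nonnegative series. The only cosmetic difference is that you apply the eigenvalue bound $\Lambda_1 I \preceq H_k$ before summing whereas the paper applies it after; this changes nothing.
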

   					\begin{proof}
				From Lemma~\ref{c-thmlin} and by taking total expectation, we have
   					\begin{align*}
   						\E[F(x_{k+1})] &\leq \E[F(x_k)]  - \frac{\alpha}{2}\E [\|H_k^{1/2}\nabla F(x_k)\|^2],
   					\end{align*}	
				    and hence
				    \[
				    \E [\|H_k^{1/2}\nabla F(x_k)\|^2] \leq \frac{2}{\alpha} \E [F(x_k) - F(x_{k+1})] .
				    \]
   					Summing both sides of this inequality from $k= 0$ to $T-1$, and since
				$F$ is bounded below by  $F_{min}$, we get
   					\[
   					\sum_{k=0}^{T-1}\E [\|H_k^{1/2}\nabla F(x_k)\|^2] \leq \frac{2}{\alpha} \E [F(x_0) - F(x_{\mbox{\sc t}})] 
				\leq \frac{2}{\alpha}  [F(x_0) - F_{min}].
   					\]
				Using the bound on the eigenvalues of $H_k$ and taking limits, we obtain
\begin{equation*}
\Lambda_1\lim_{T \rightarrow \infty}\sum_{k=0}^{T-1}\E [\|\nabla F(x_k)\|^2] \leq  \lim_{T \rightarrow \infty} \sum_{k=0}^{T-1}\E [\|H_k^{1/2}\nabla F(x_k)\|^2] < \infty,
\end{equation*}
which implies \eqref{convergence}.
   					We can also conclude that
   					\begin{align*}
   					\min_{0\leq k \leq T-1} \E [\|\nabla F(x_k)\|^2] \leq & \frac{1}{T}\sum_{k=0}^{T}\E [\|\nabla F(x_k)\|^2] 
				\leq \frac{2}{\alpha T\Lambda_1}  (F(x_0) - F_{min}).
   					\end{align*}
   				\end{proof}	

	\newpage
	\section{Additional Numerical Experiments}
\label{sec:additional_exp}

\subsection{Datasets}

Table \ref{datasets} summarizes the datasets used for the experiments. Some of these datasets divide the data into training and testing sets; for the rest, we randomly divide the data so that the training set constitutes 90\% of the total. 
\begin{table}[htp] \label{datasets}
	\centering
		\caption{Characteristics of all datasets used in the experiments.}	\label{datasets}
	\vskip 0.15in
	\begin{tabular}{l r r c c c} \toprule
		Dataset & \# Data Points (train; test) & \# Features & \# Classes
		& Source
		\\ \midrule
		\texttt{gisette} & (6,000; 1,000) & 5,000 & 2 &\cite{CC01a}\\
		\texttt{mushrooms} & (7,311; 813) & 112 & 2 & \cite{CC01a}\\
		\texttt{sido} & (11,410; 1,268) & 4,932 & 2 & \cite{guyon2008design} \\
		\texttt{ijcnn} & (35,000; 91701) & 22 & 2 & \cite{CC01a}\\
		\texttt{spam} & (82,970; 9,219) & 823,470 & 2 & \cite{cormack2005spam,carbonetto2009new}\\
		\texttt{alpha} & (450,000; 50,000) & 500 & 2 & synthetic\\
		\texttt{covertype} & (522,910; 58,102) & 54 & 2 & \cite{CC01a}\\
		\texttt{url} & (2,156,517; 239,613) & 3,231,961 & 2 & \cite{CC01a}\\
		\texttt{MNIST} & (60,000; 10,000) & $28 \times 28$ & 10 & \cite{lecun1998gradient} \\
		\texttt{CIFAR-10} & (50,000; 10,000) & $32 \times 32$ & 10 & \cite{krizhevsky2009learning} \\
		\bottomrule
	\end{tabular} 
\vskip -0.1in
\end{table} 

The alpha dataset is a synthetic dataset that is available at \url{ftp://largescale.ml.tu-berlin.de}.

\subsection{Logistic Regression Experiments}

We report the numerical results on binary classification logistic regression problems on the $8$ datasets given in Table \ref{datasets}. We plot the performance measured in terms of training error, test loss and test accuracy against  gradient evaluations. We also report the behavior of the batch sizes and steplengths for both variants of the PBQN method.

\begin{figure*}[!htp]
	\begin{centering}
		\includegraphics[width=0.33\linewidth]{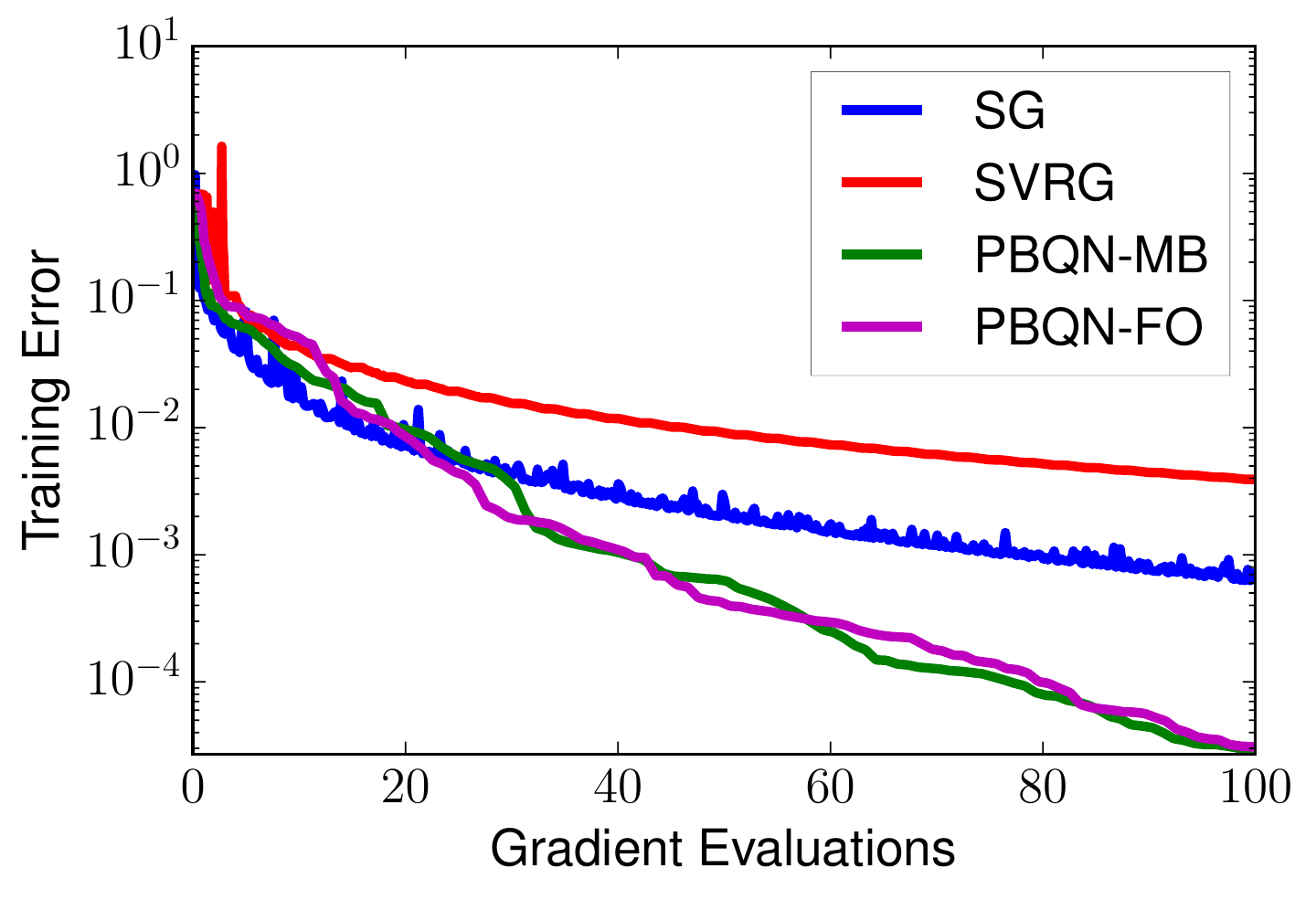}
		\includegraphics[width=0.33\linewidth]{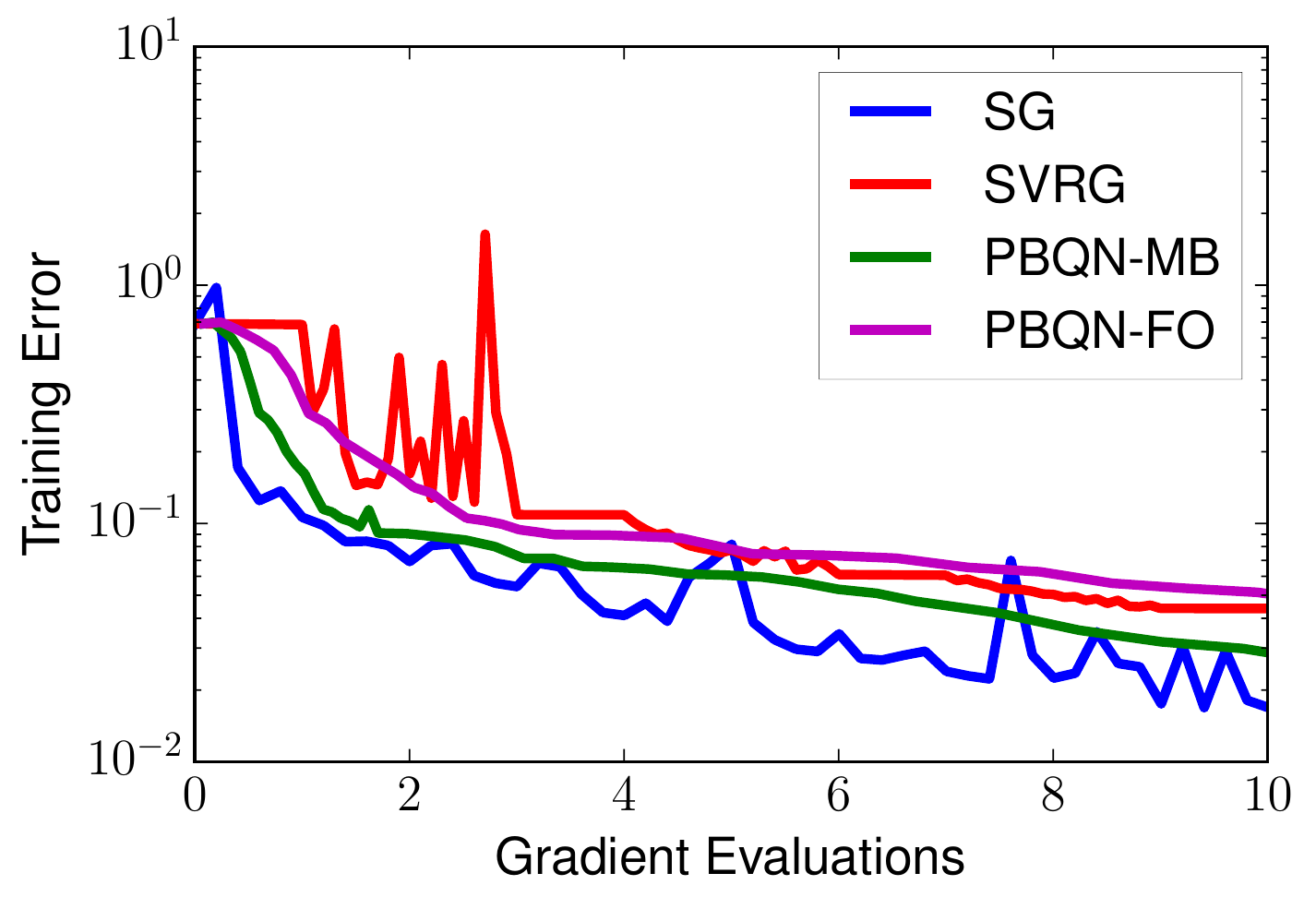}
		\includegraphics[width=0.33\linewidth]{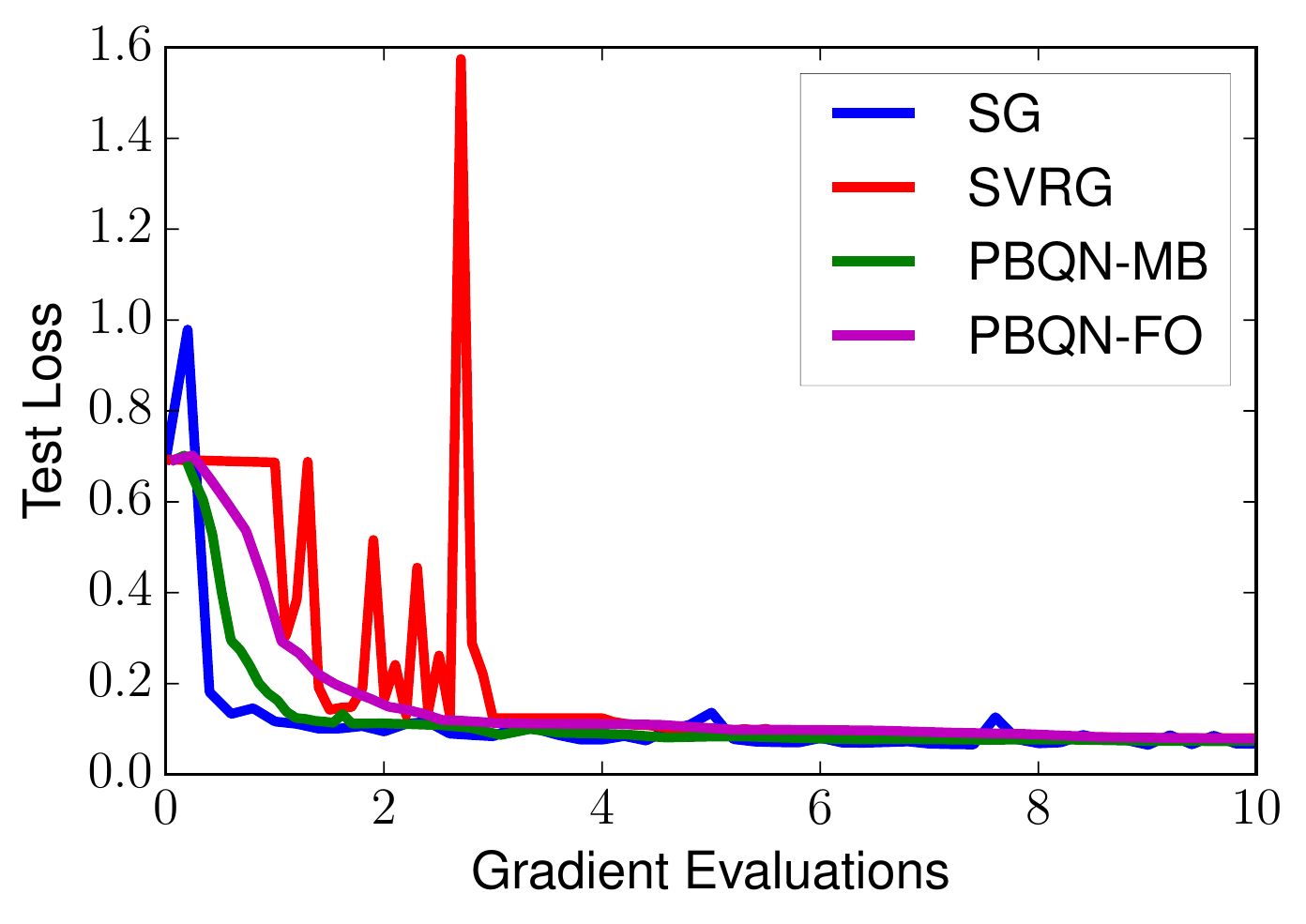}
		\includegraphics[width=0.33\linewidth]{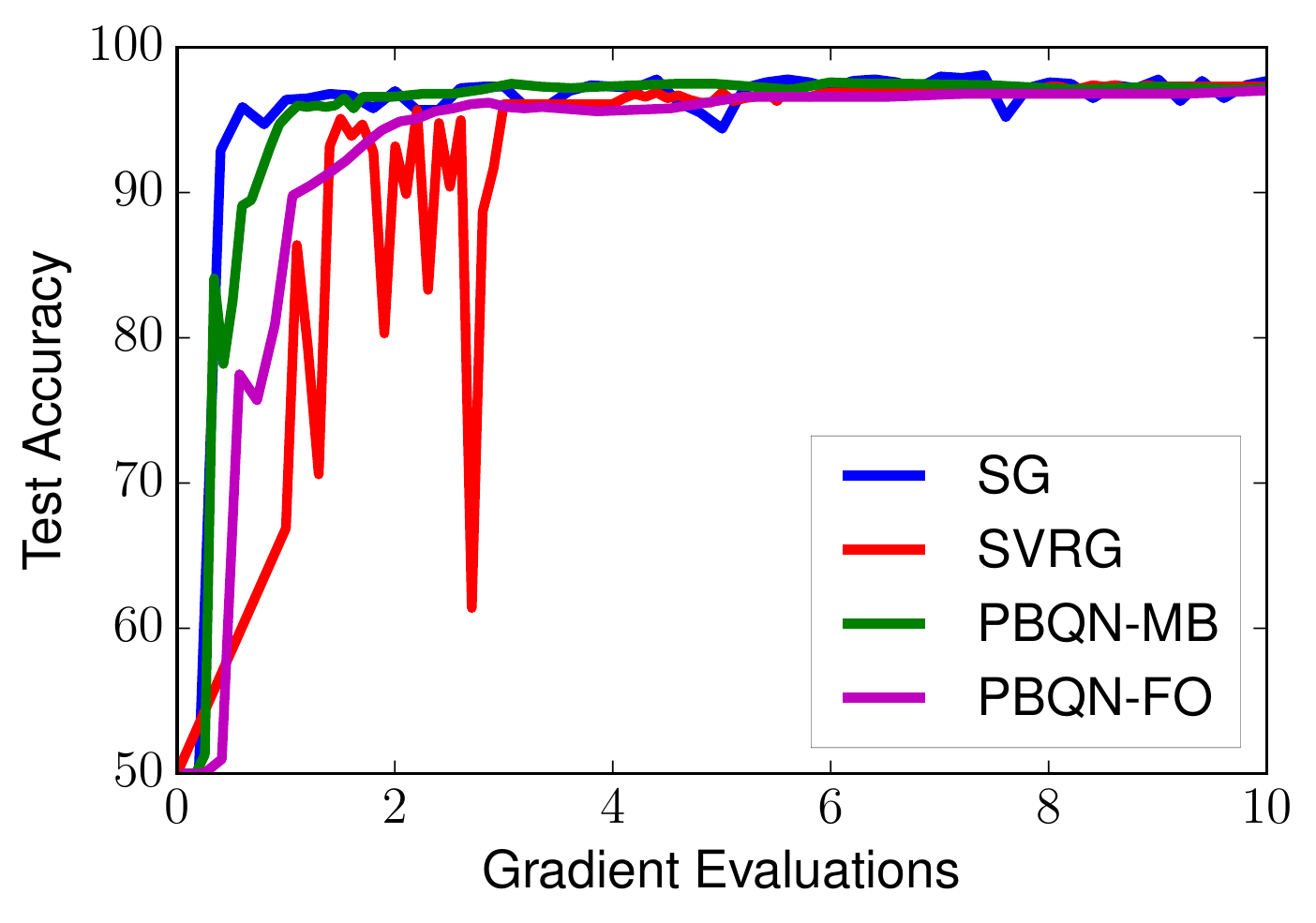}
		\includegraphics[width=0.33\linewidth]{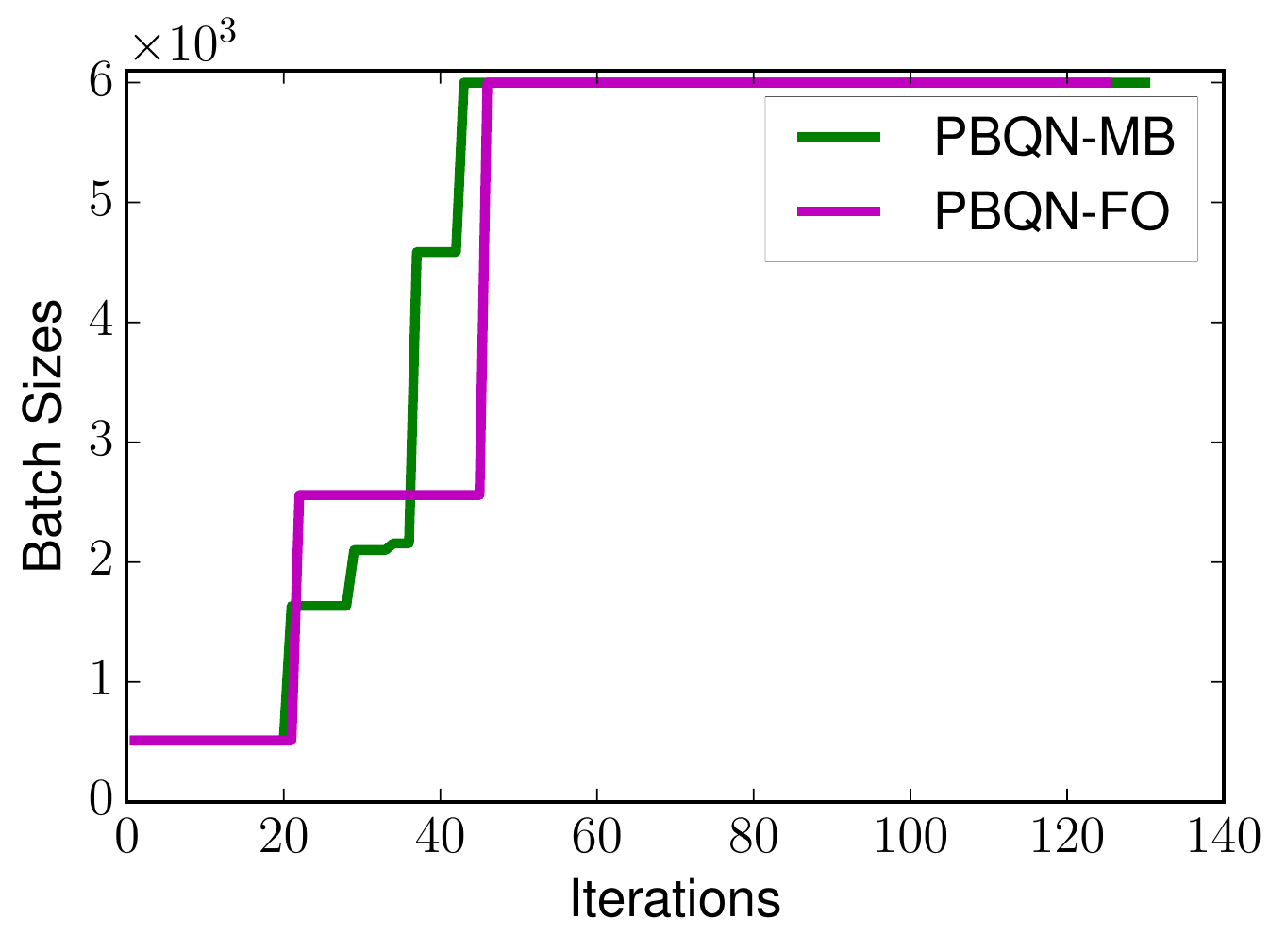}
		\includegraphics[width=0.33\linewidth]{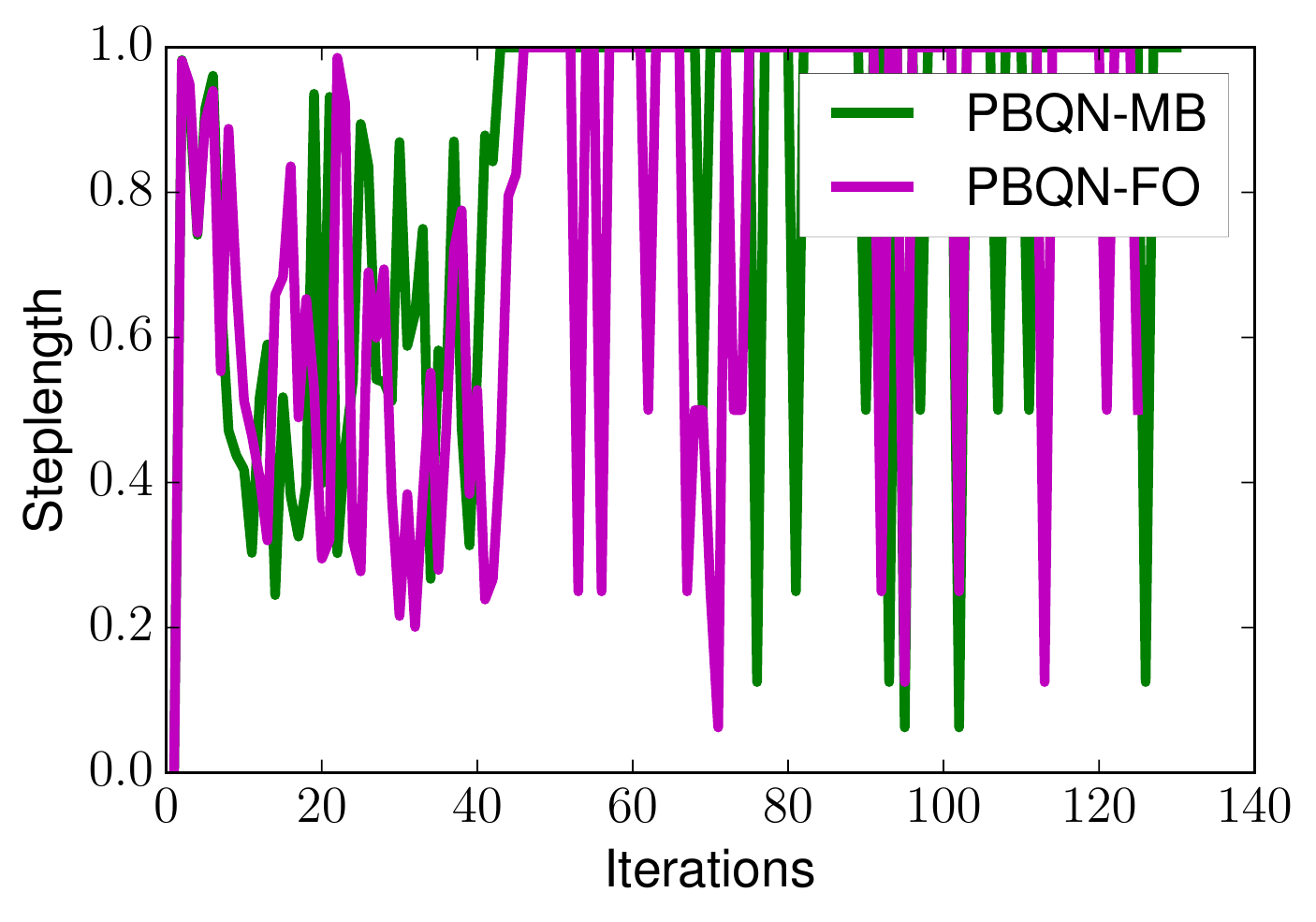}
		\par\end{centering}
	\caption{ \textbf{ gisette dataset:} Performance of the progressive batching L-BFGS methods, with multi-batch (MB) (25\% overlap) and full-overlap (FO) approaches, and the SG and SVRG methods.}
	\label{exp:gisette} 
\end{figure*}

\begin{figure*}[!htp]
	\begin{centering}
		\includegraphics[width=0.33\linewidth]{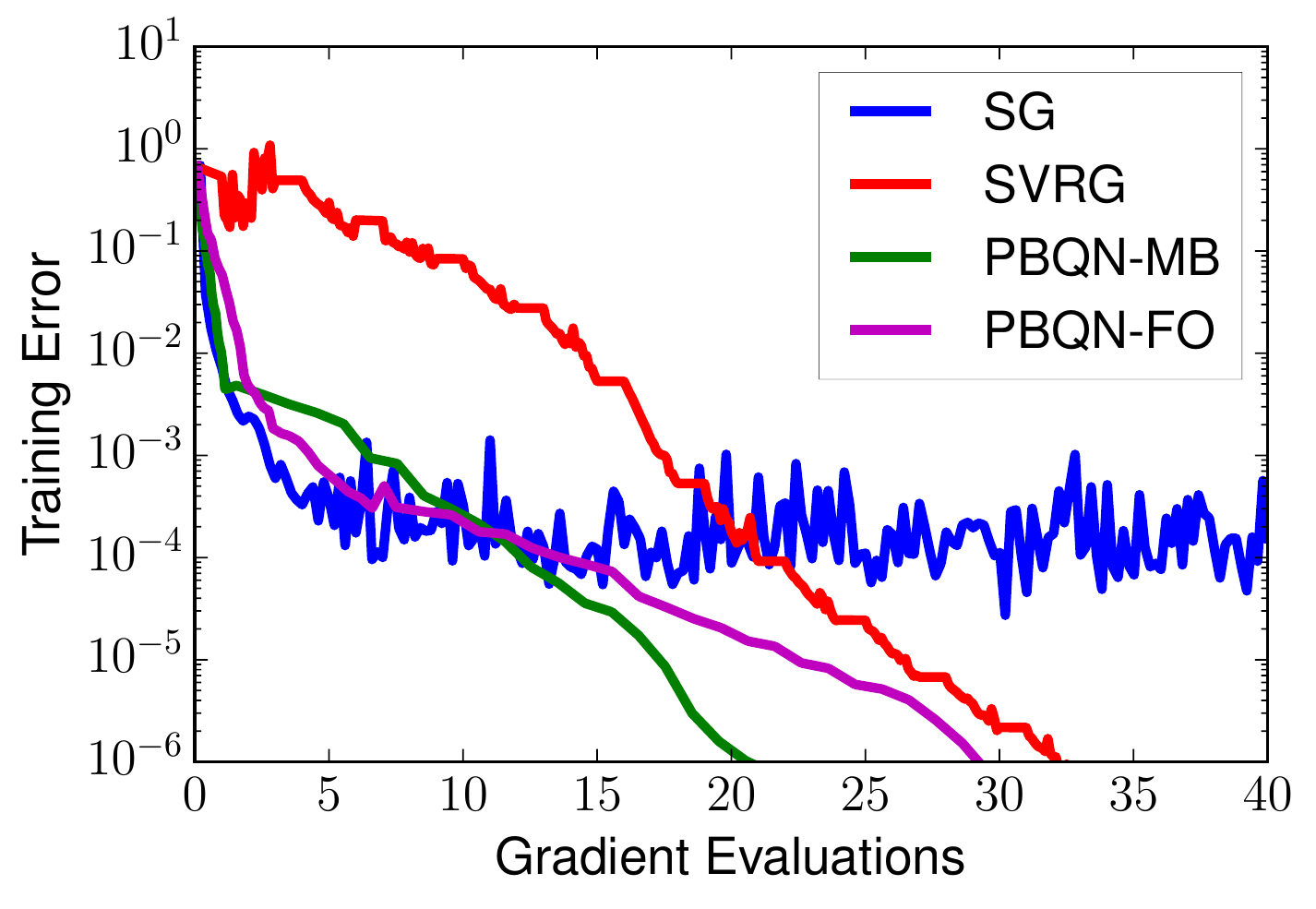}
		\includegraphics[width=0.33\linewidth]{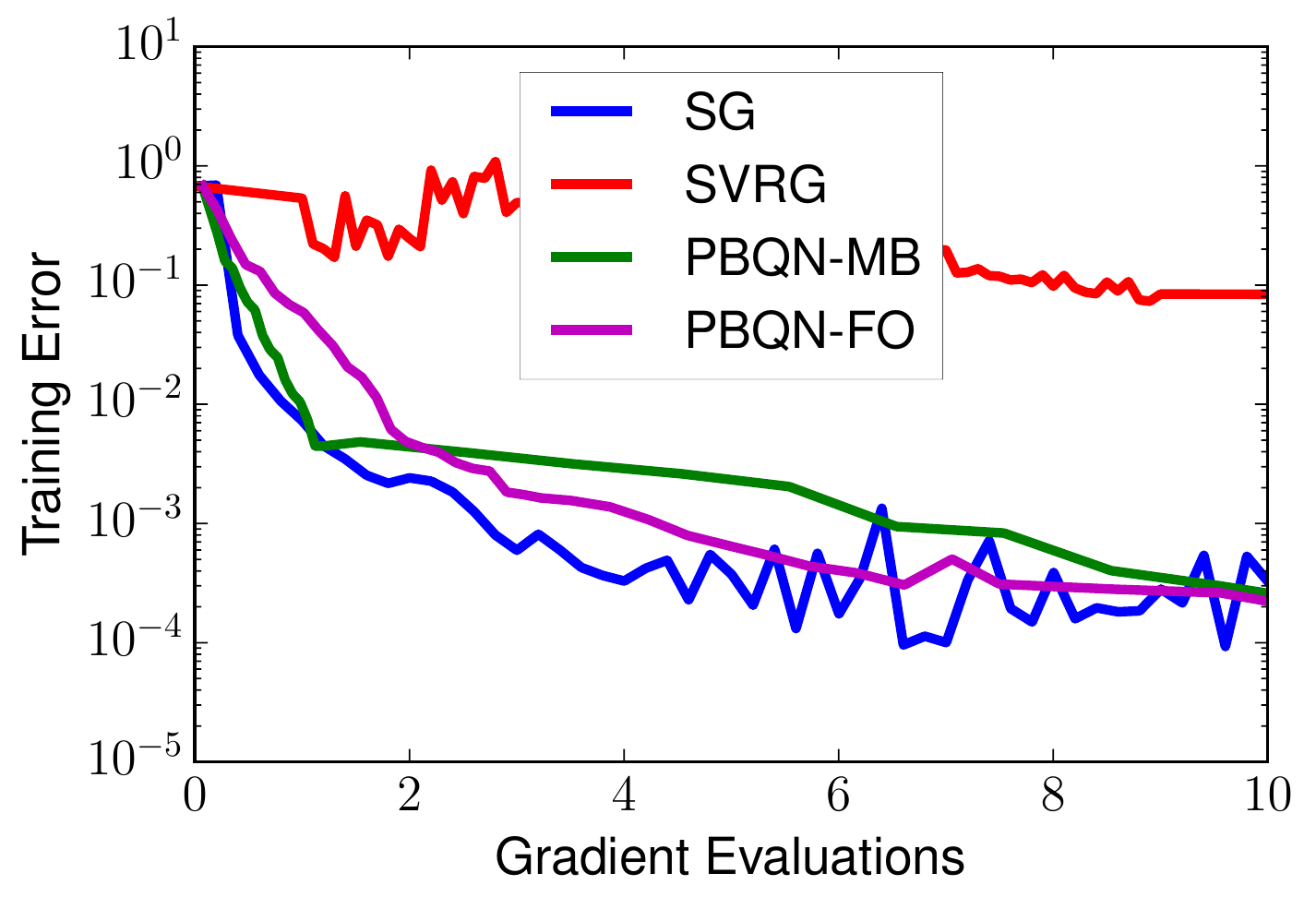}
		\includegraphics[width=0.33\linewidth]{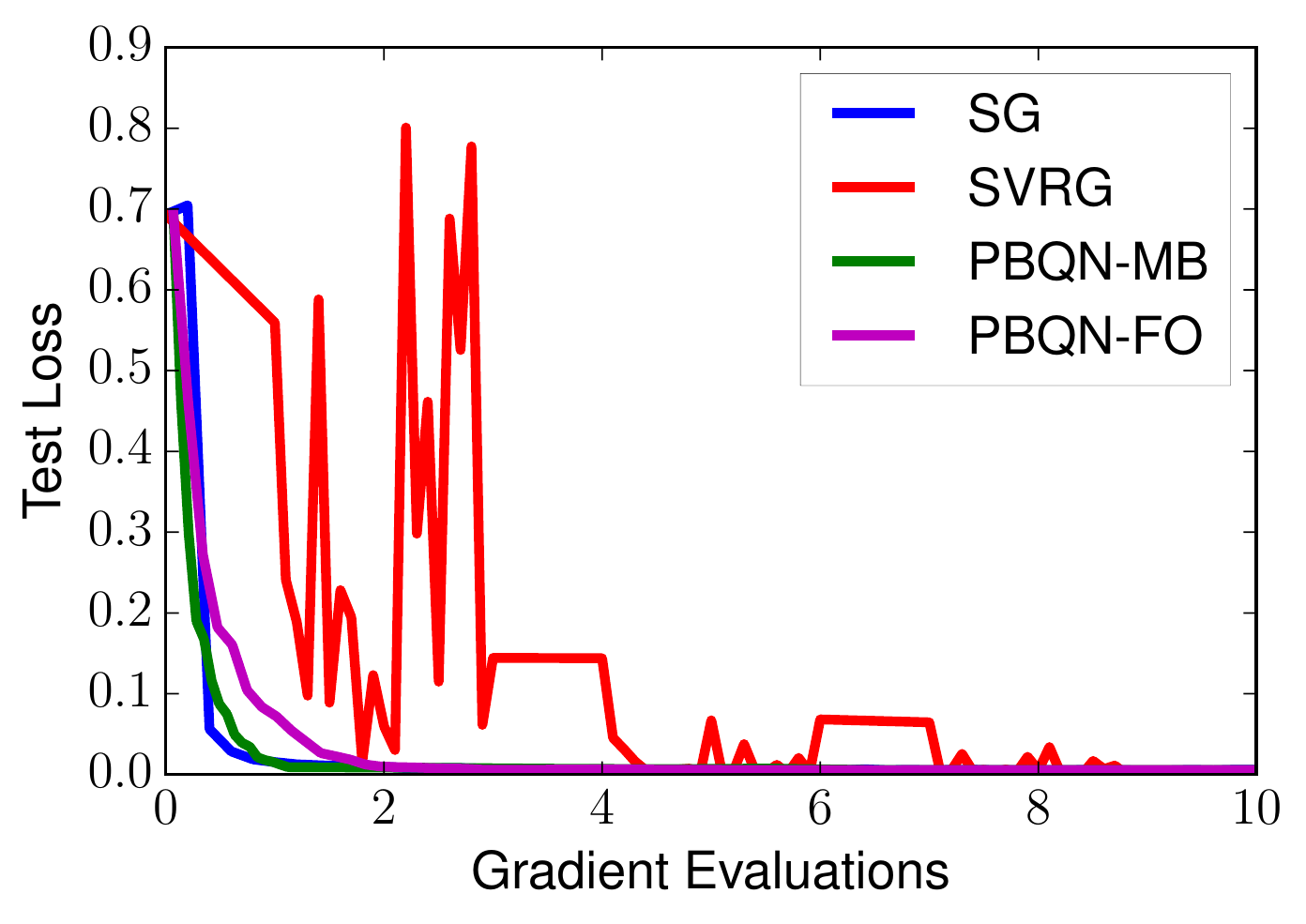}
		\includegraphics[width=0.33\linewidth]{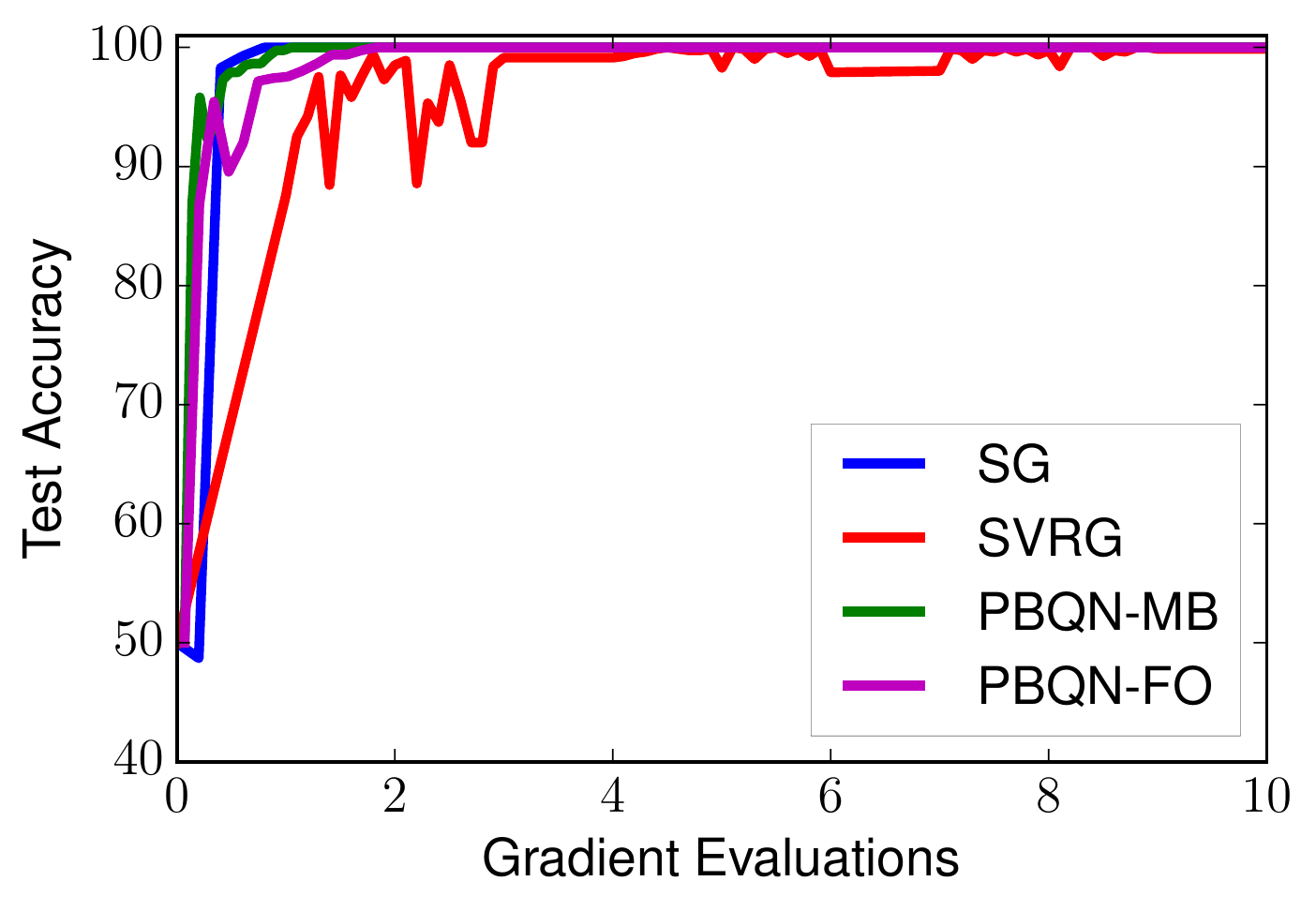}
		\includegraphics[width=0.33\linewidth]{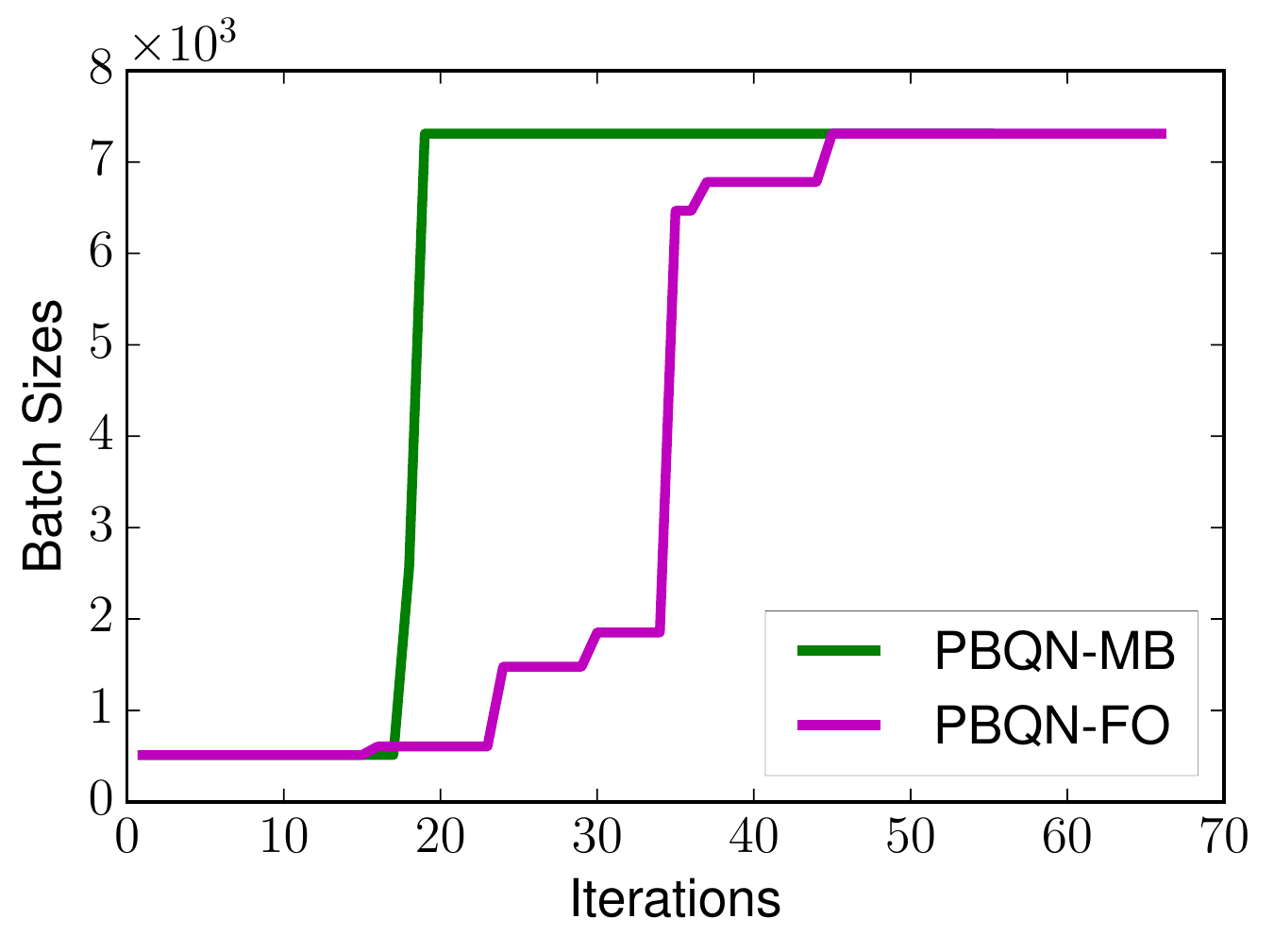}
		\includegraphics[width=0.33\linewidth]{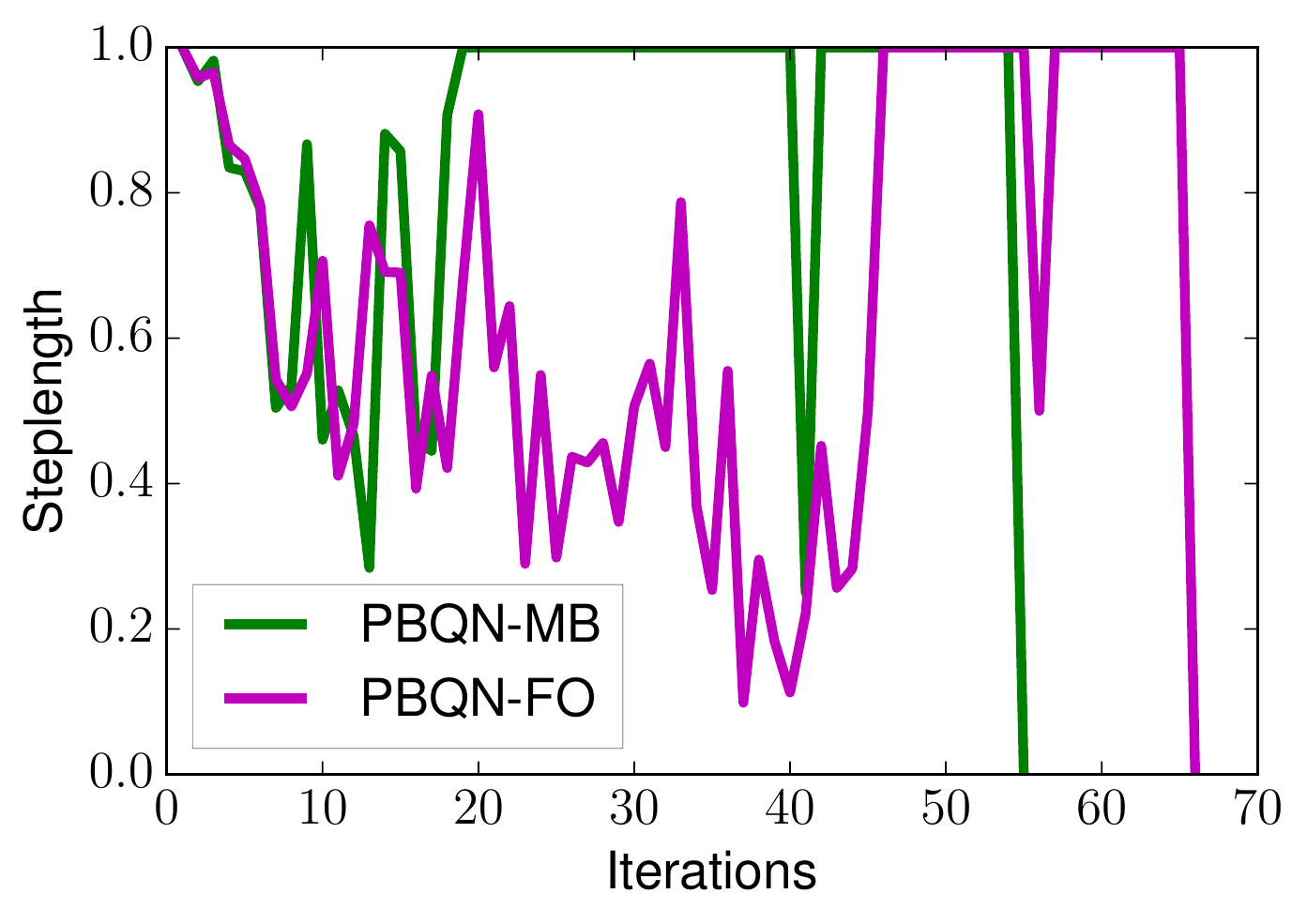}
		\par\end{centering}
	\caption{ \textbf{mushrooms dataset:} Performance of the progressive batching L-BFGS methods, with multi-batch (MB) (25\% overlap) and full-overlap (FO) approaches, and the SG and SVRG methods.}
	\label{exp:mushrooms} 
\end{figure*}

\begin{figure*}[!htp]
	\begin{centering}
		\includegraphics[width=0.33\linewidth]{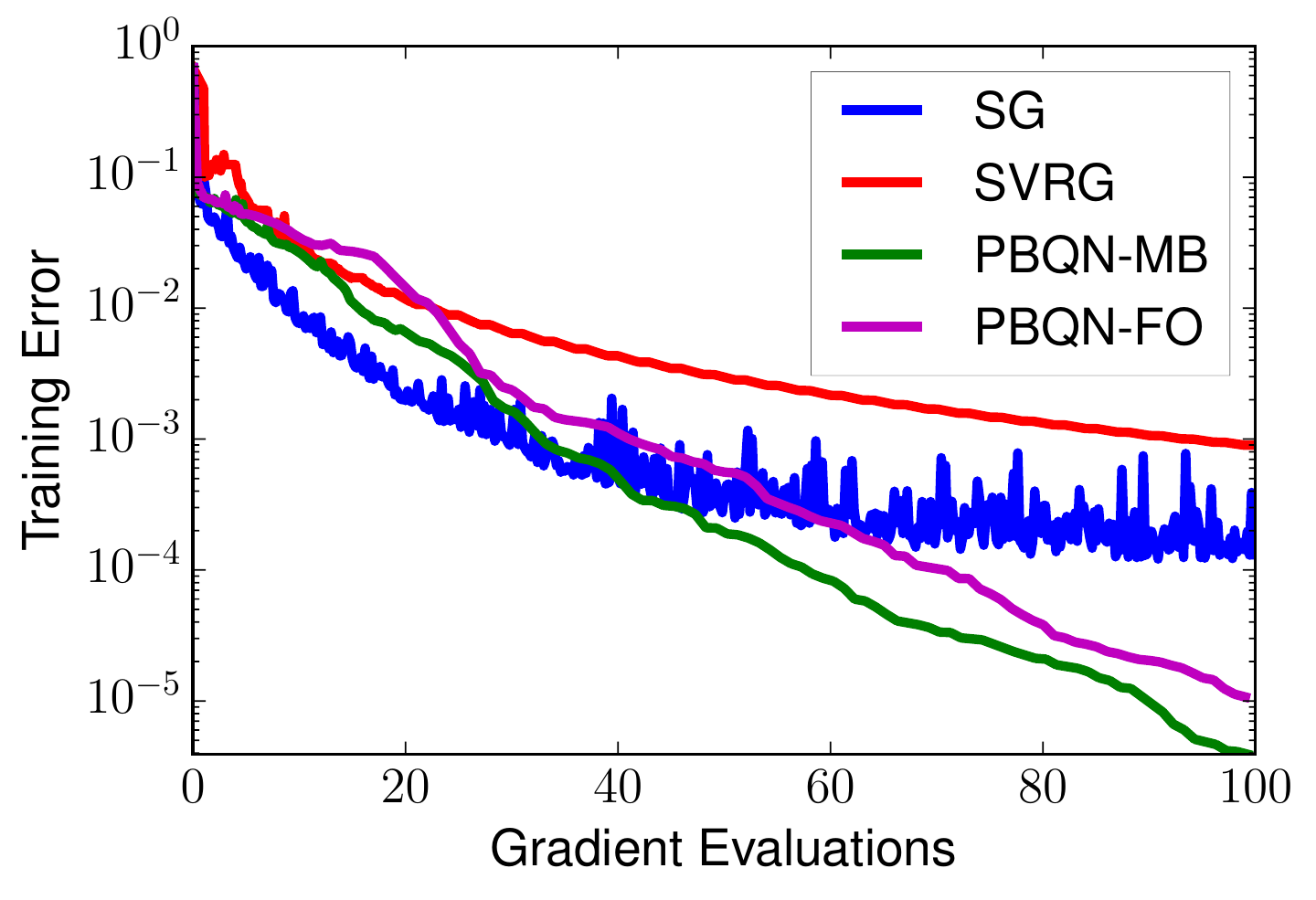}
		\includegraphics[width=0.33\linewidth]{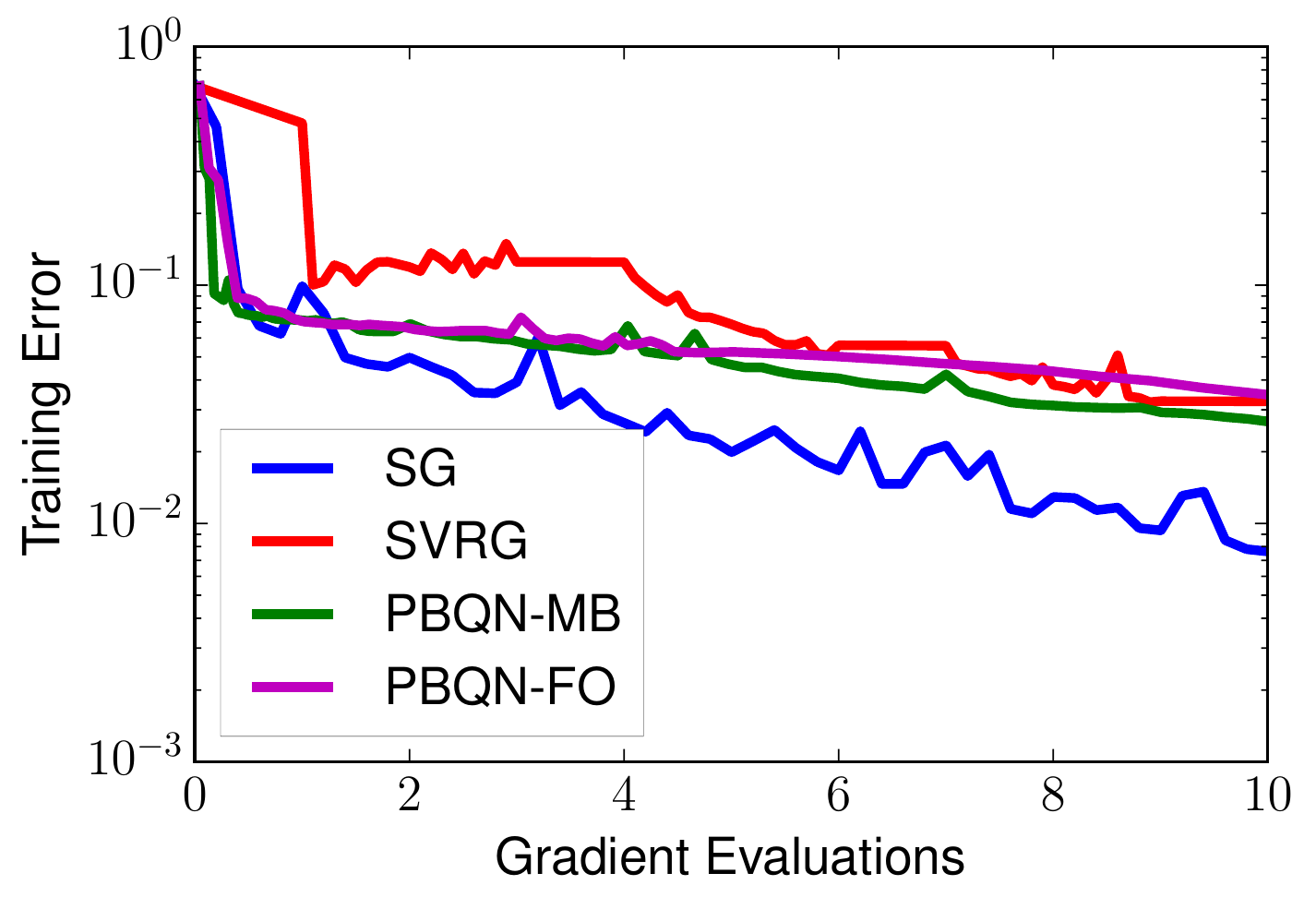}
		\includegraphics[width=0.33\linewidth]{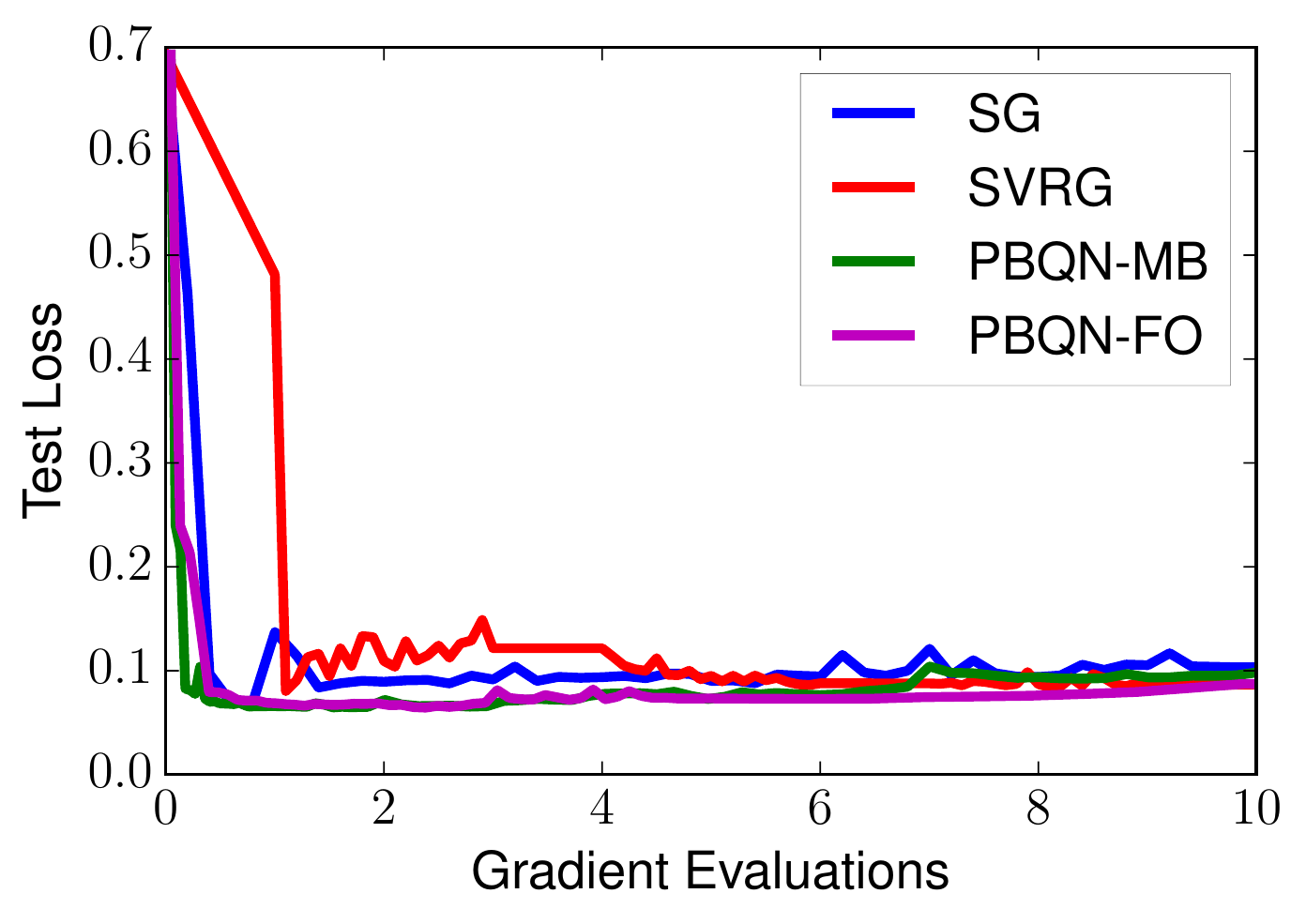}
		\includegraphics[width=0.33\linewidth]{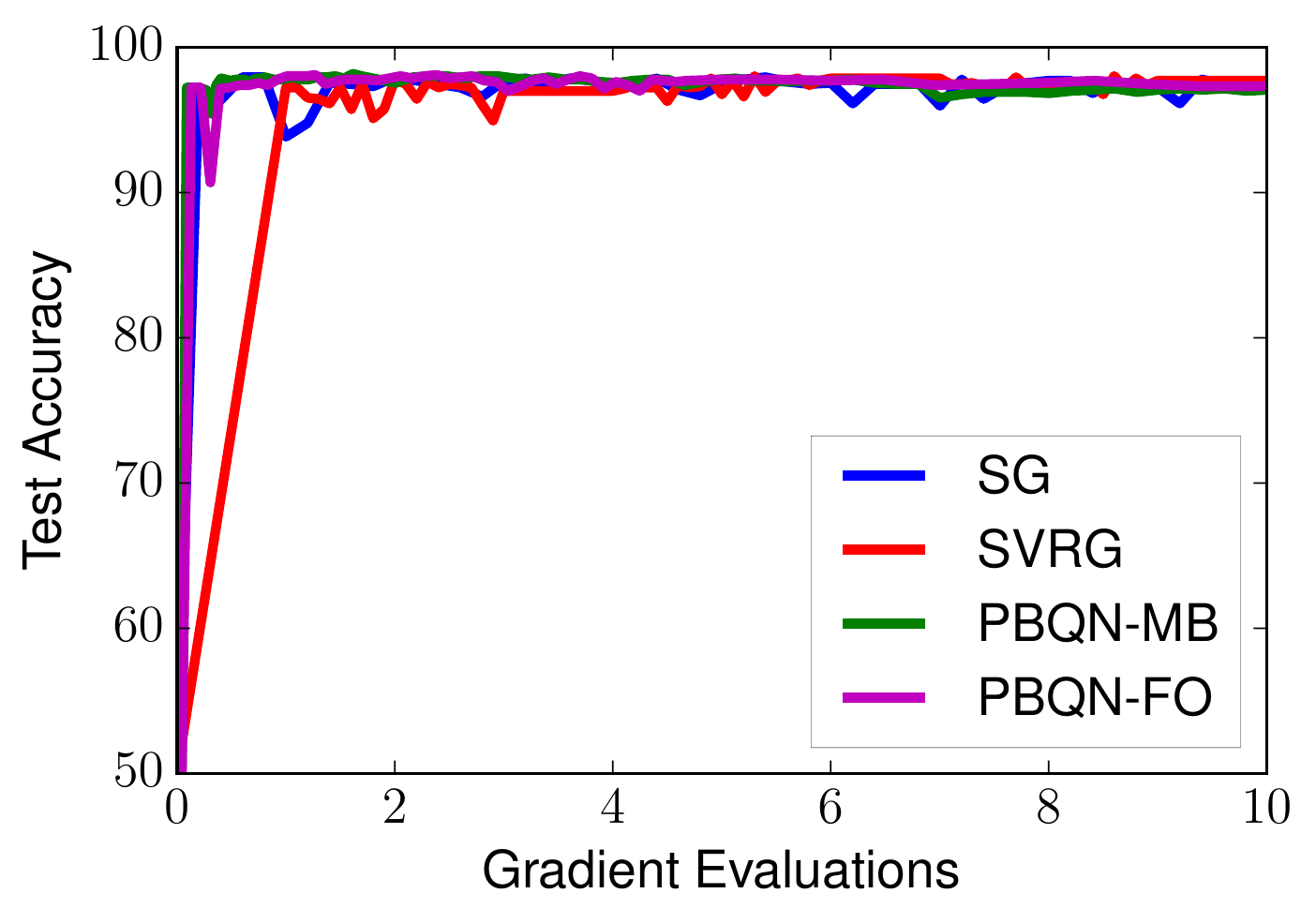}
		\includegraphics[width=0.33\linewidth]{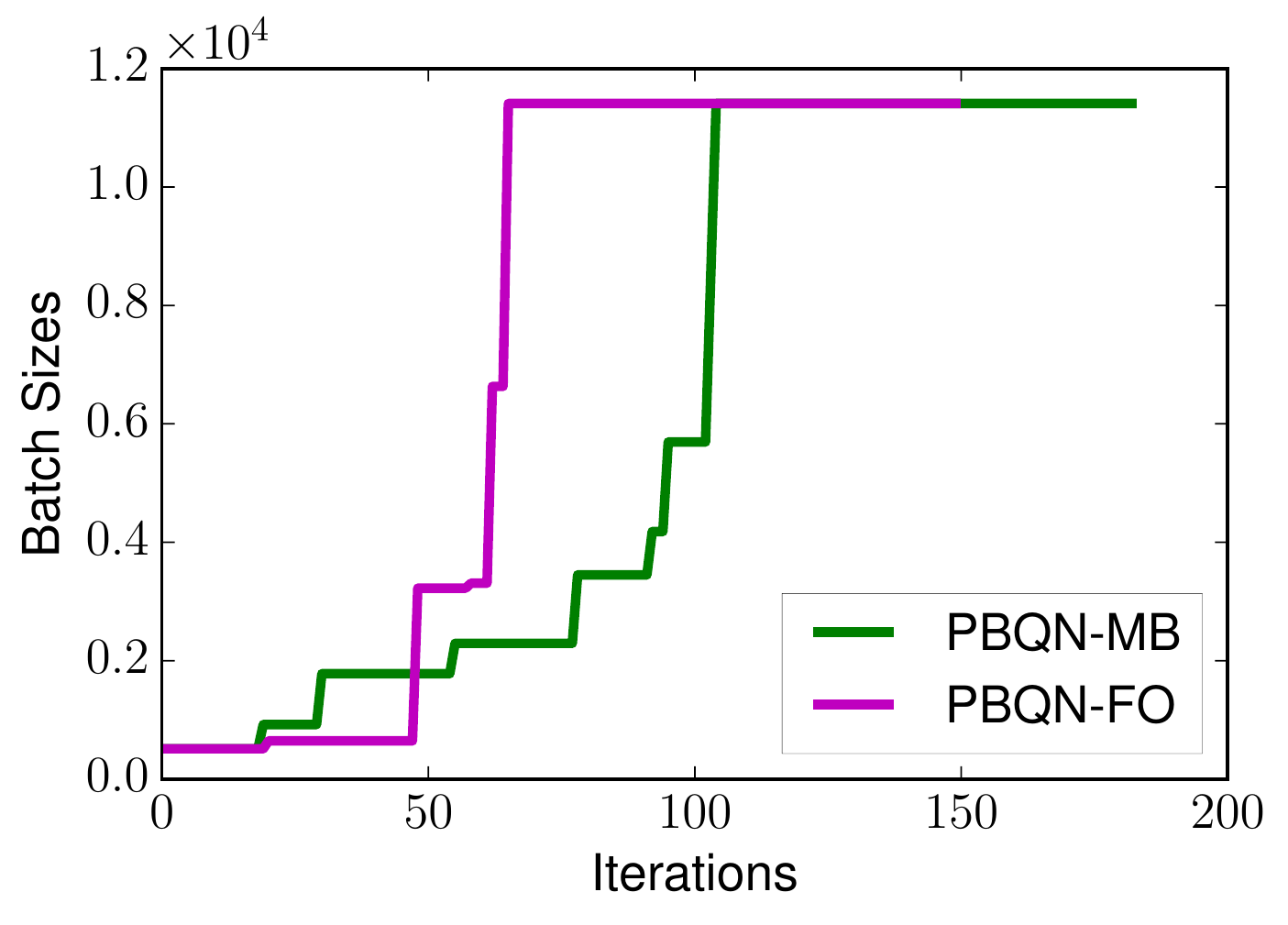}
		\includegraphics[width=0.33\linewidth]{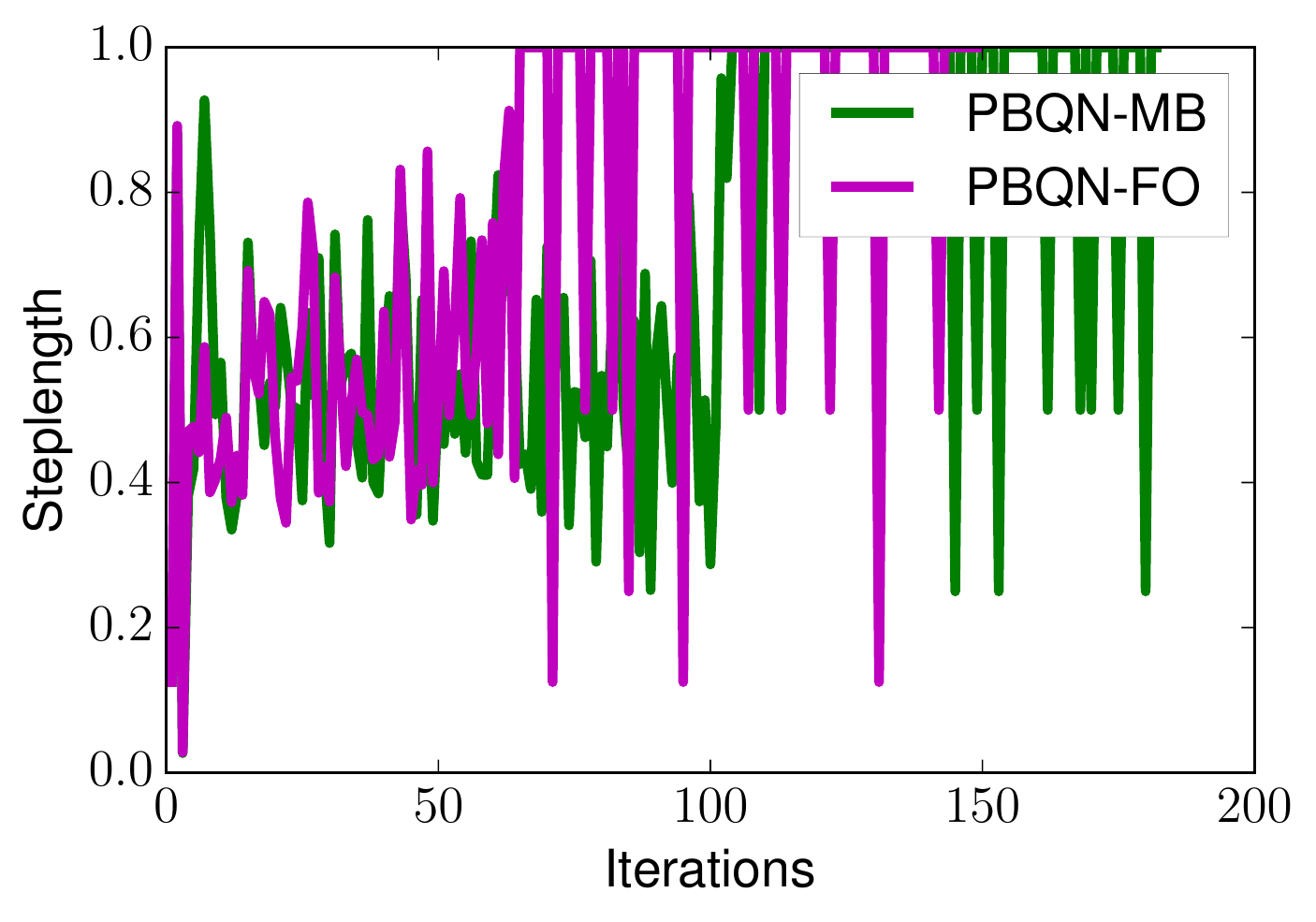}
		\par\end{centering}
	\caption{ \textbf{sido dataset:} Performance of the progressive batching L-BFGS methods, with multi-batch (MB) (25\% overlap) and full-overlap (FO) approaches, and the SG and SVRG methods.}
	\label{exp:sido} 
\end{figure*}

\begin{figure*}[!htp]
	\begin{centering}
		\includegraphics[width=0.33\linewidth]{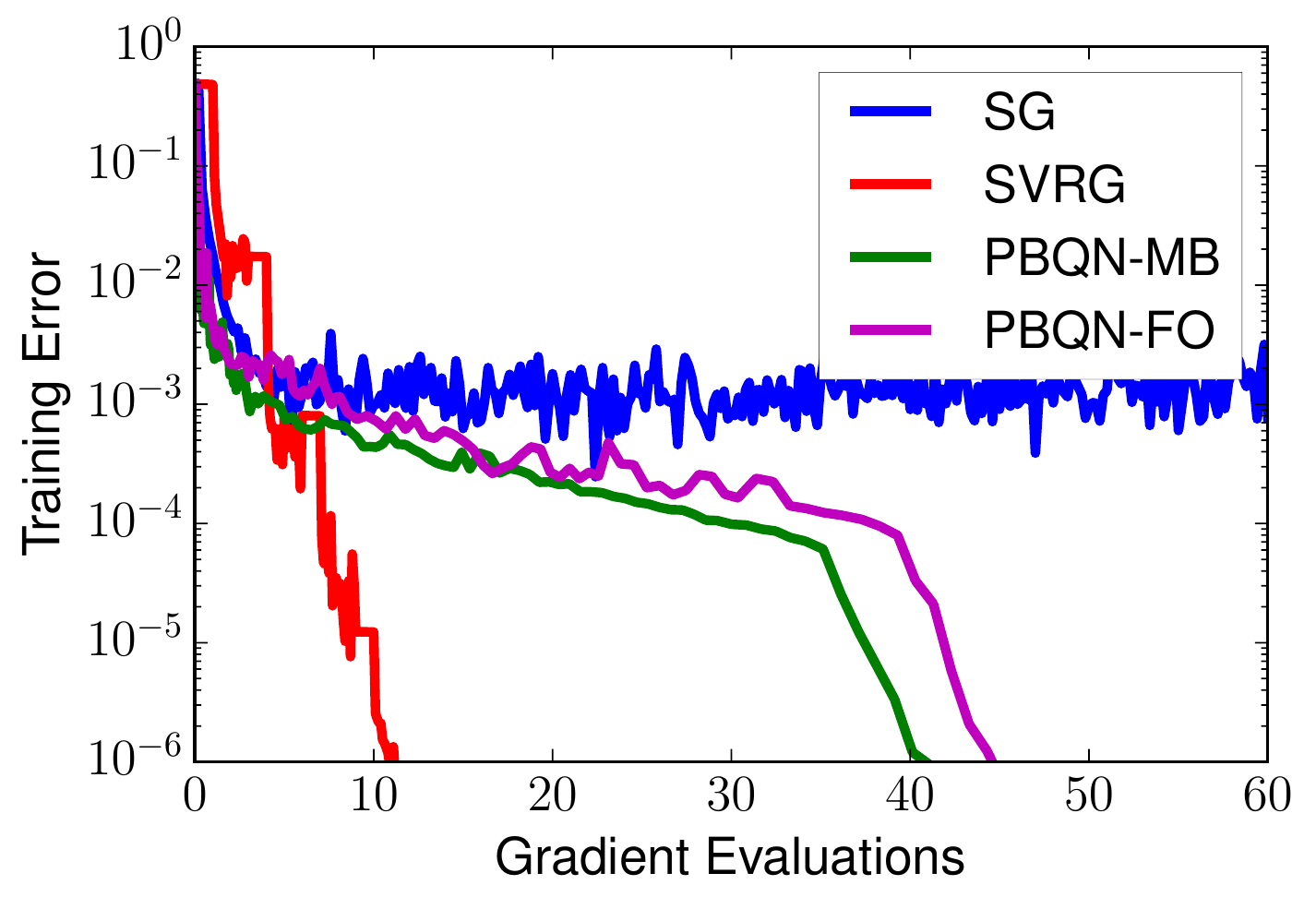}
		\includegraphics[width=0.33\linewidth]{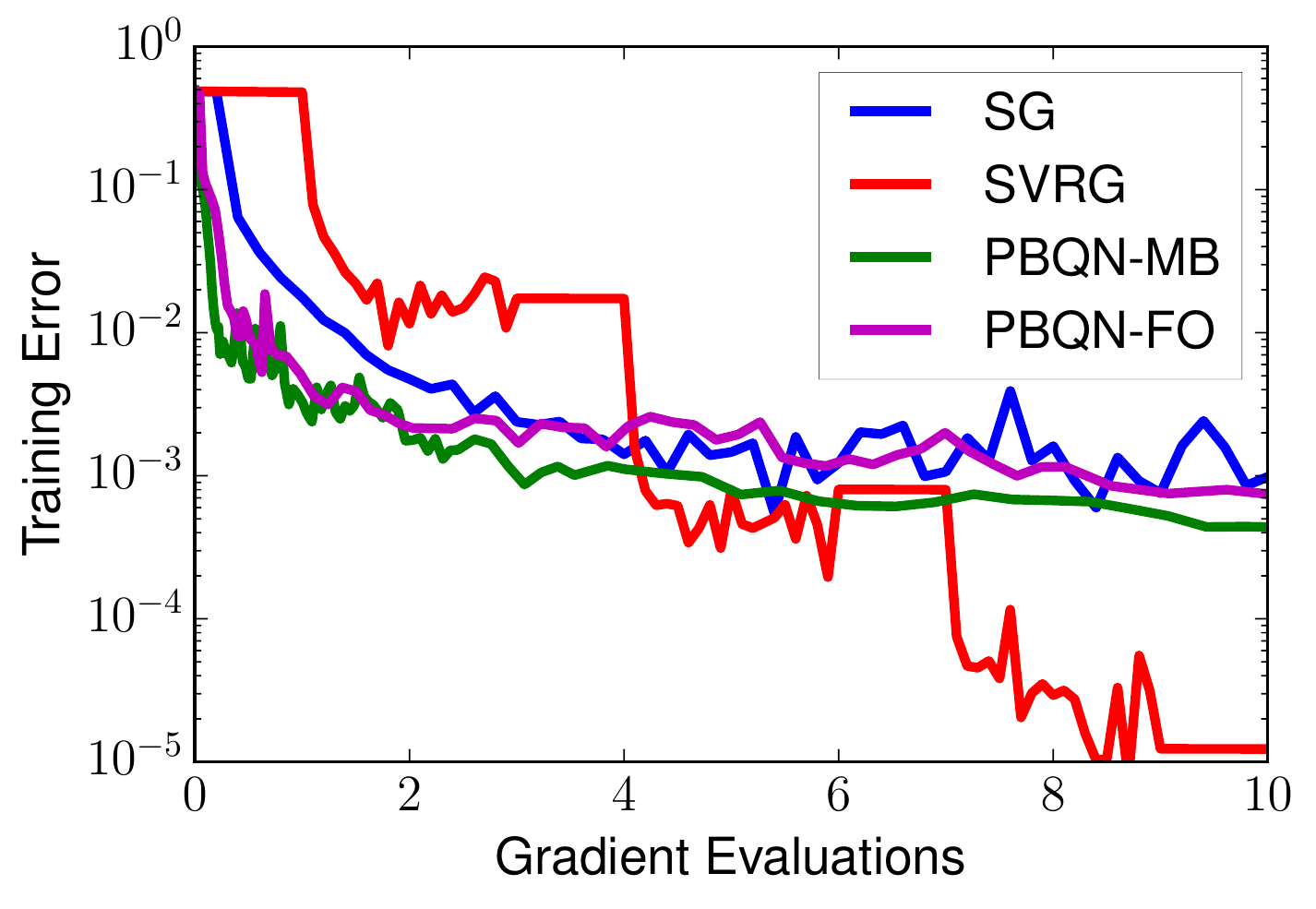}
		\includegraphics[width=0.33\linewidth]{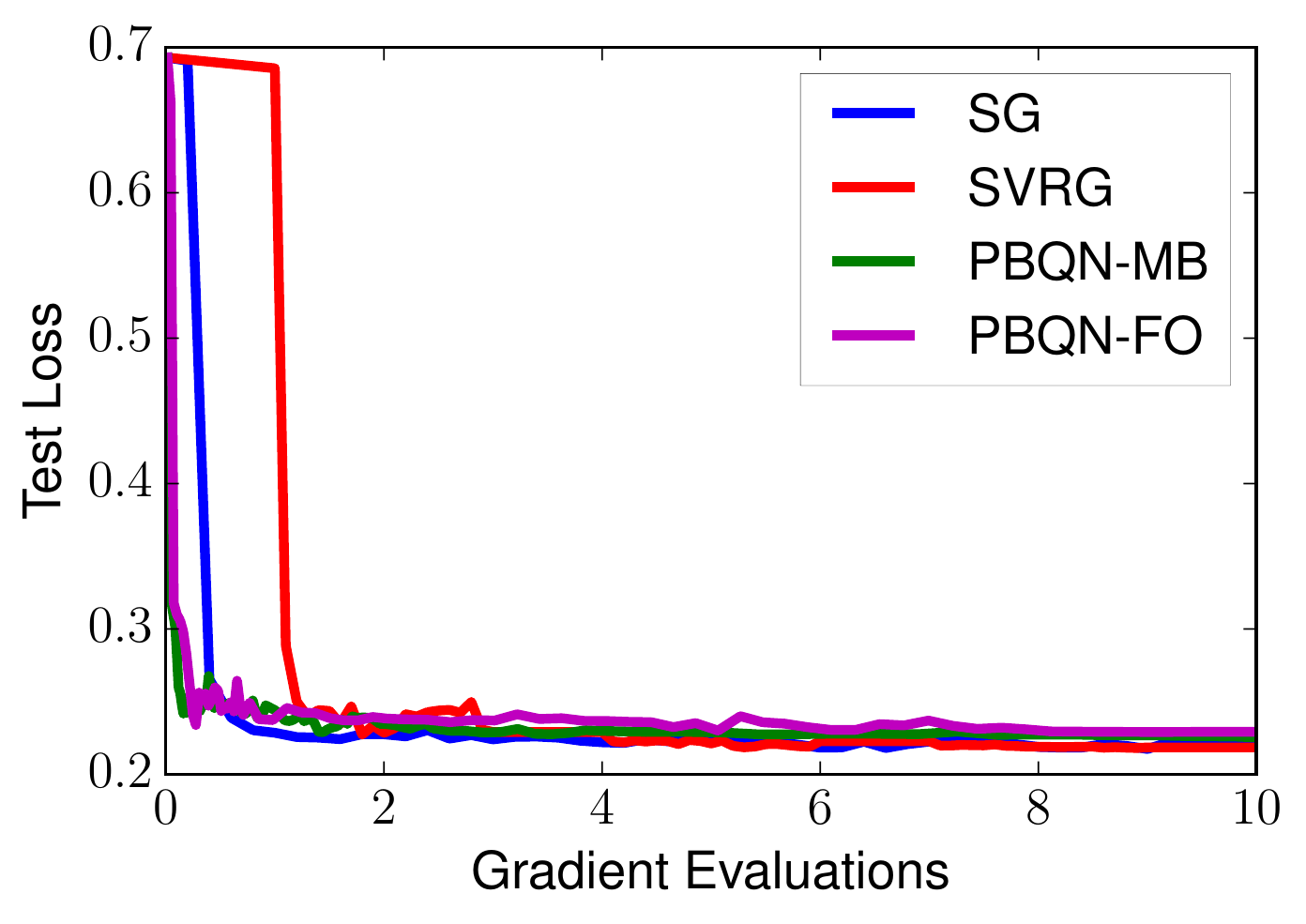}
		\includegraphics[width=0.33\linewidth]{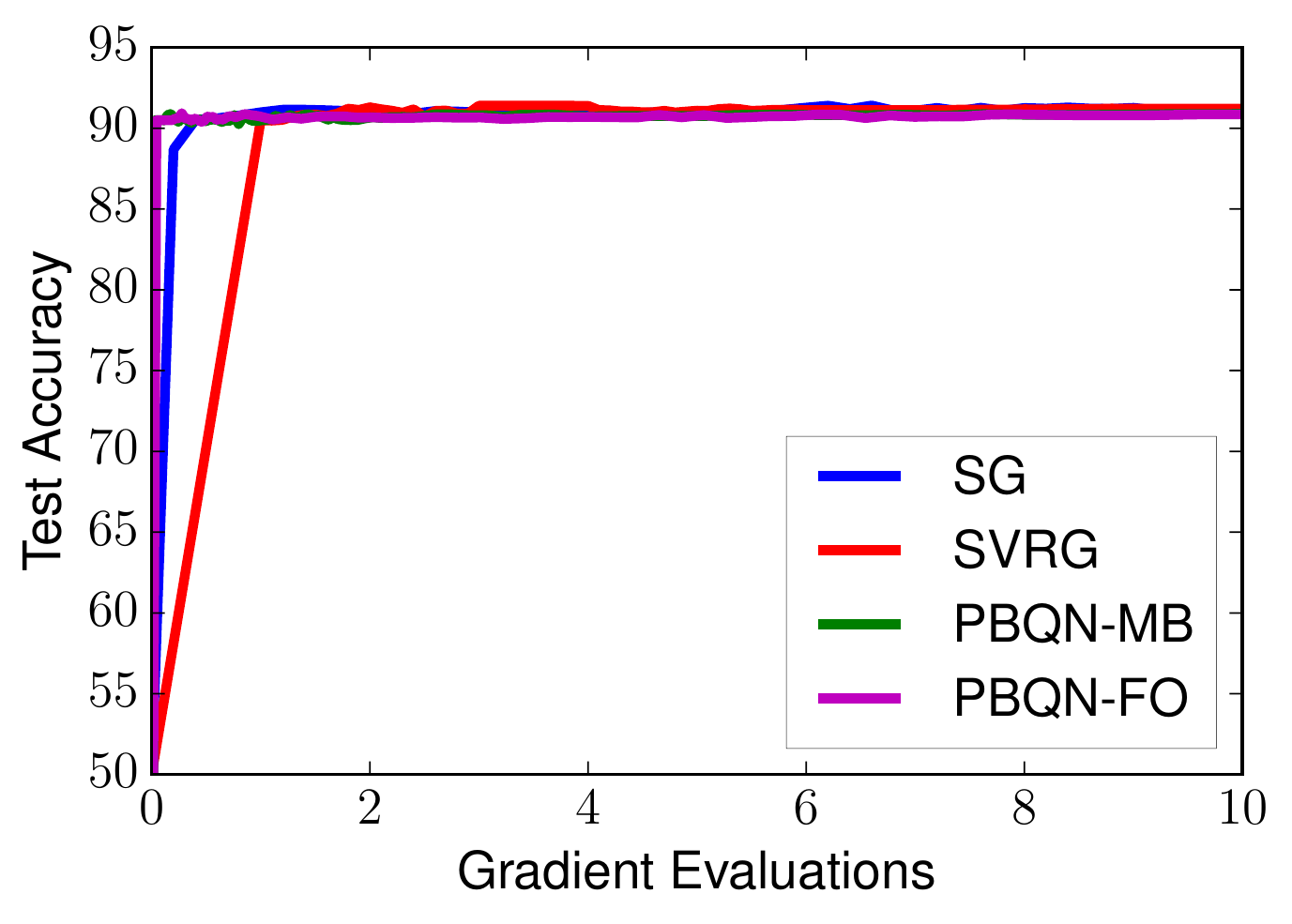}
		\includegraphics[width=0.33\linewidth]{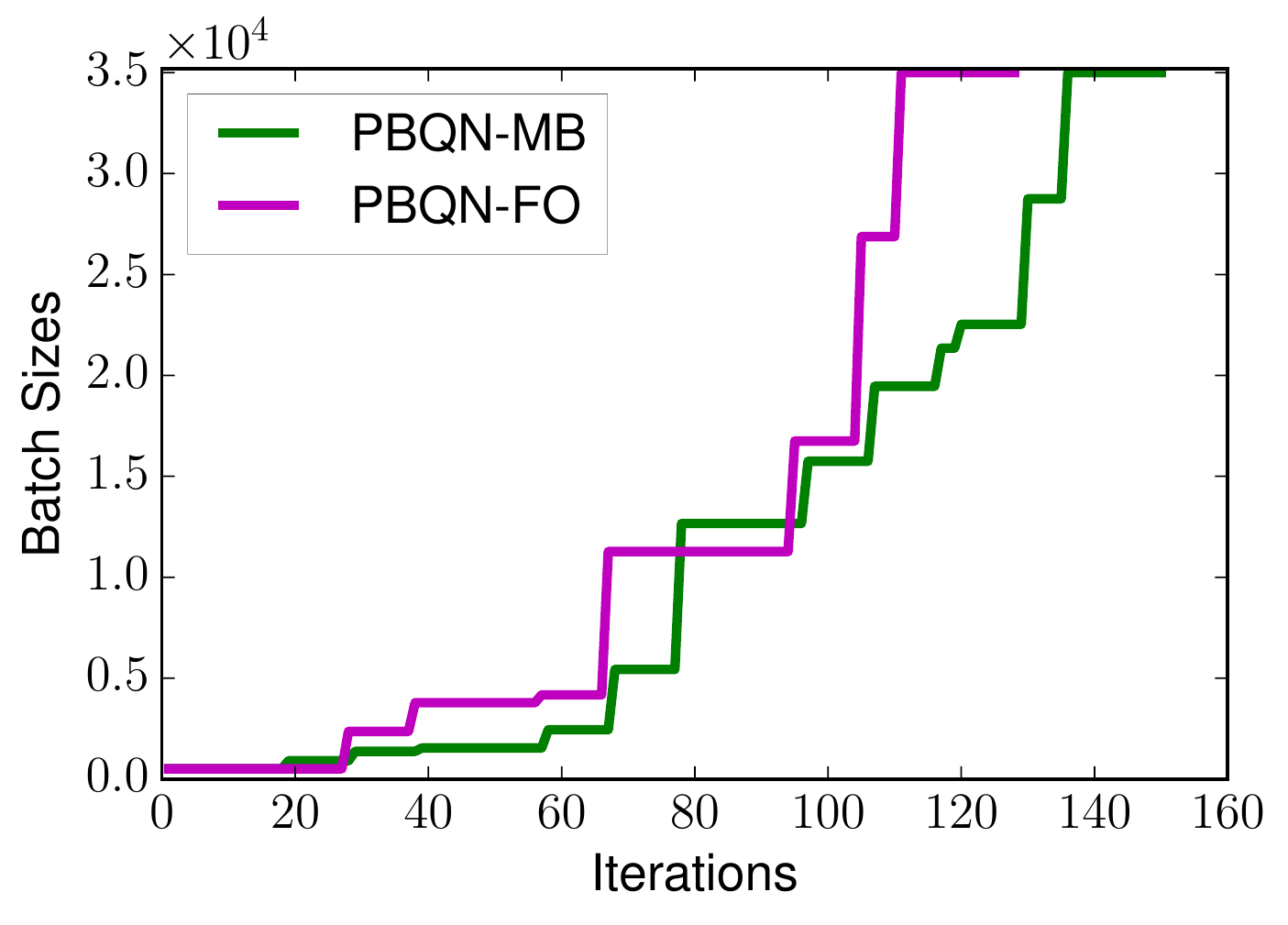}
		\includegraphics[width=0.33\linewidth]{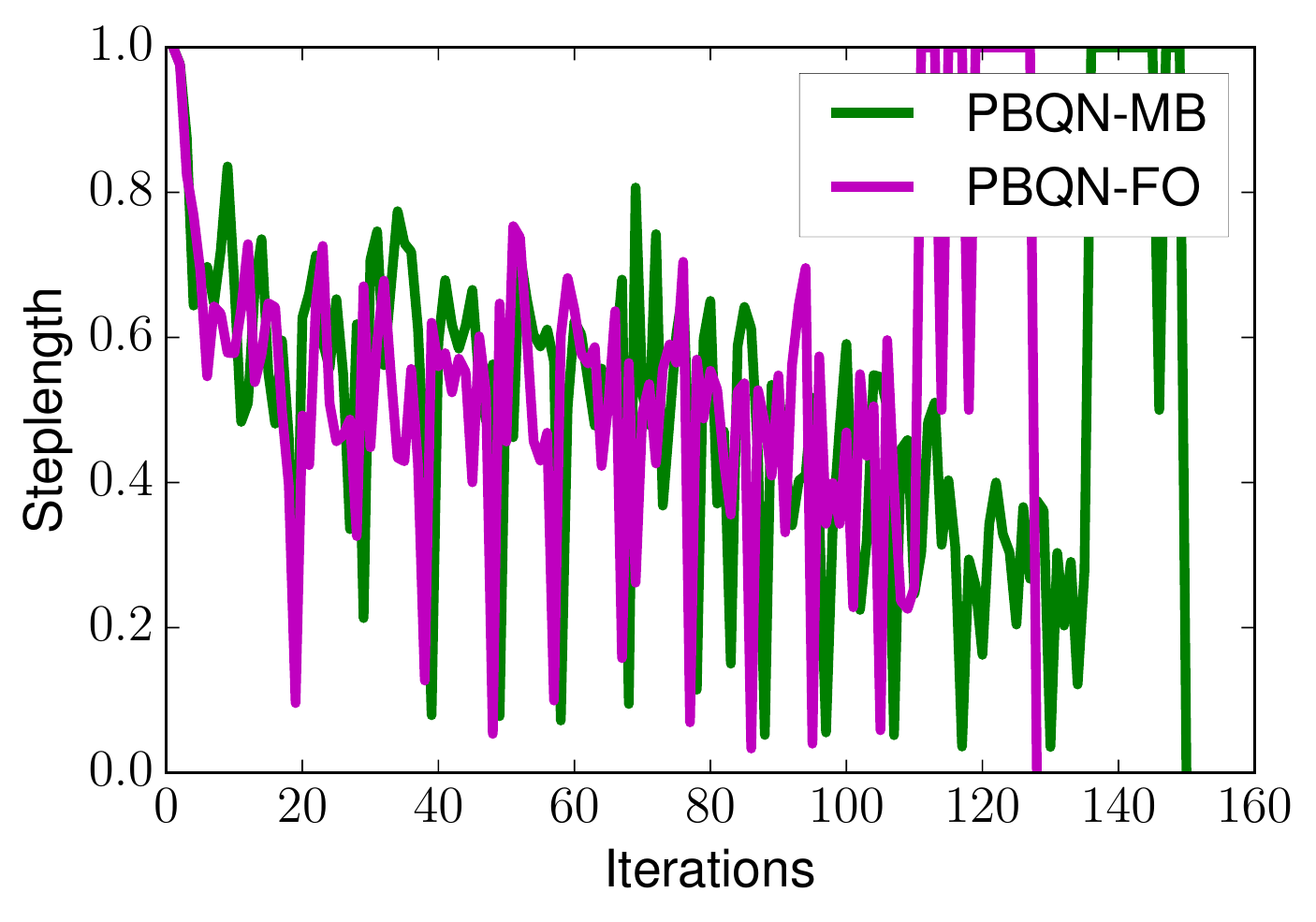}
		\par\end{centering}
	\caption{ \textbf{ijcnn dataset:} Performance of the progressive batching L-BFGS methods, with multi-batch (MB) (25\% overlap) and full-overlap(FO) approaches, and the SG and SVRG methods.}
	\label{exp:ijcnn} 
\end{figure*}

\begin{figure*}[!htp]
	\begin{centering}
		\includegraphics[width=0.33\linewidth]{trec2005_gradevals_train_loss_wide.pdf}
		\includegraphics[width=0.33\linewidth]{trec2005_gradevals_train_loss_tight.pdf}
		\includegraphics[width=0.33\linewidth]{trec2005_gradevals_test_loss_tight.pdf}
		\includegraphics[width=0.33\linewidth]{trec2005_gradevals_test_acc_tight.pdf}
		\includegraphics[width=0.33\linewidth]{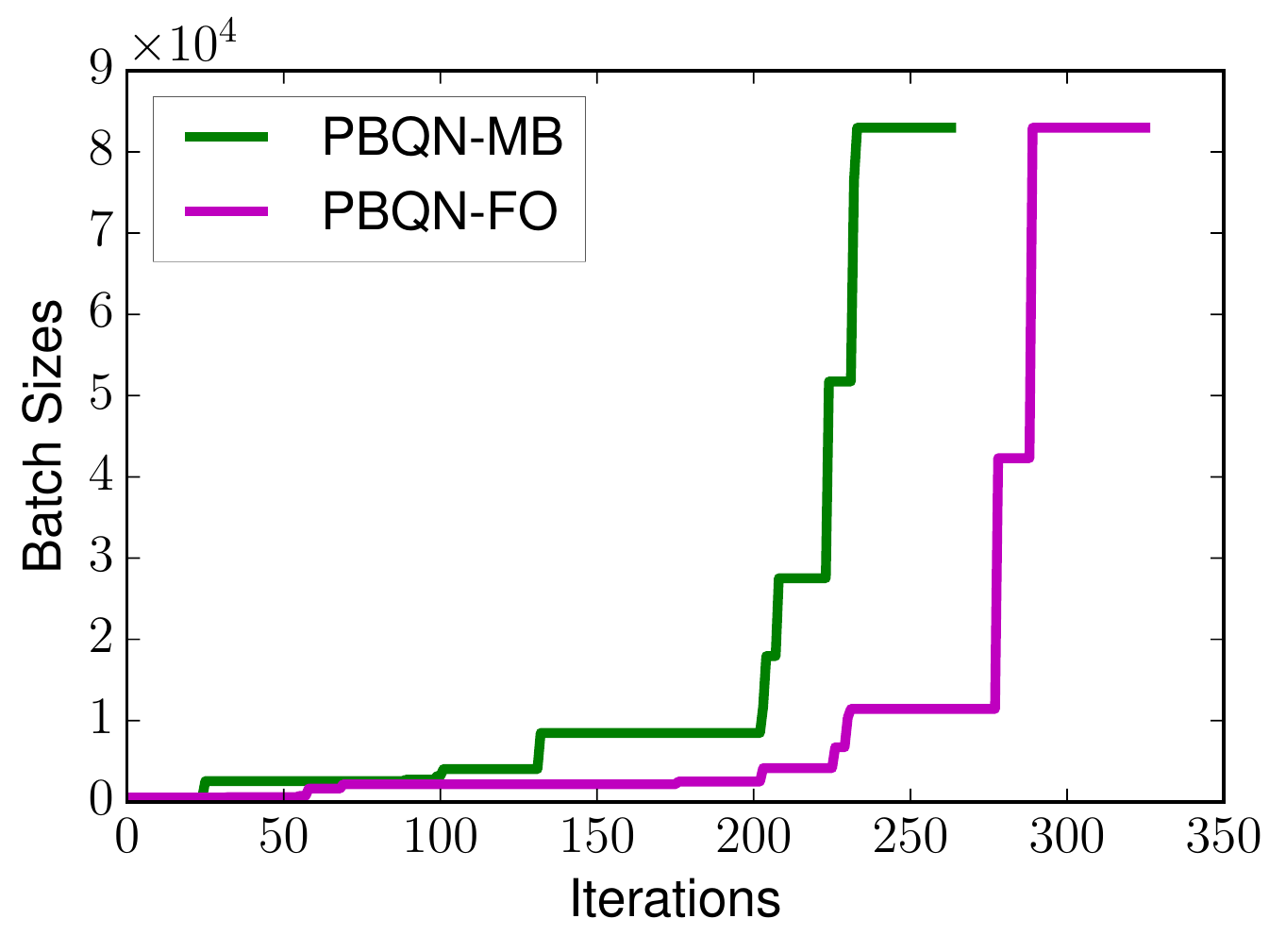}
		\includegraphics[width=0.33\linewidth]{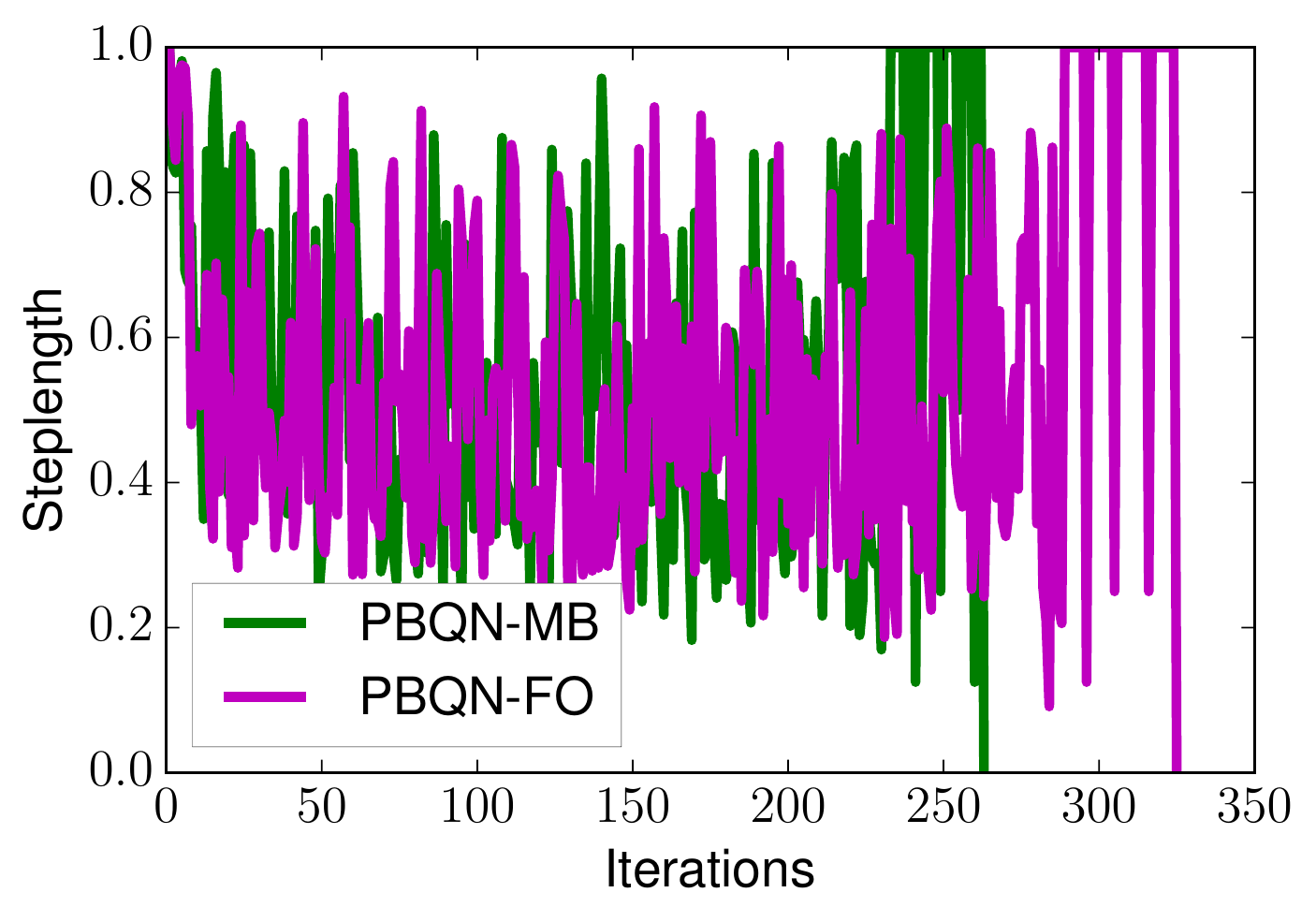}
		\par\end{centering}
	\caption{ \textbf{spam dataset:} Performance of the progressive batching L-BFGS methods, with multi-batch (MB) (25\% overlap) and full-overlap (FO) approaches, and the SG and SVRG methods.}
	\label{exp:spam} 
\end{figure*}

\begin{figure*}[!htp]
	\begin{centering}
		\includegraphics[width=0.33\linewidth]{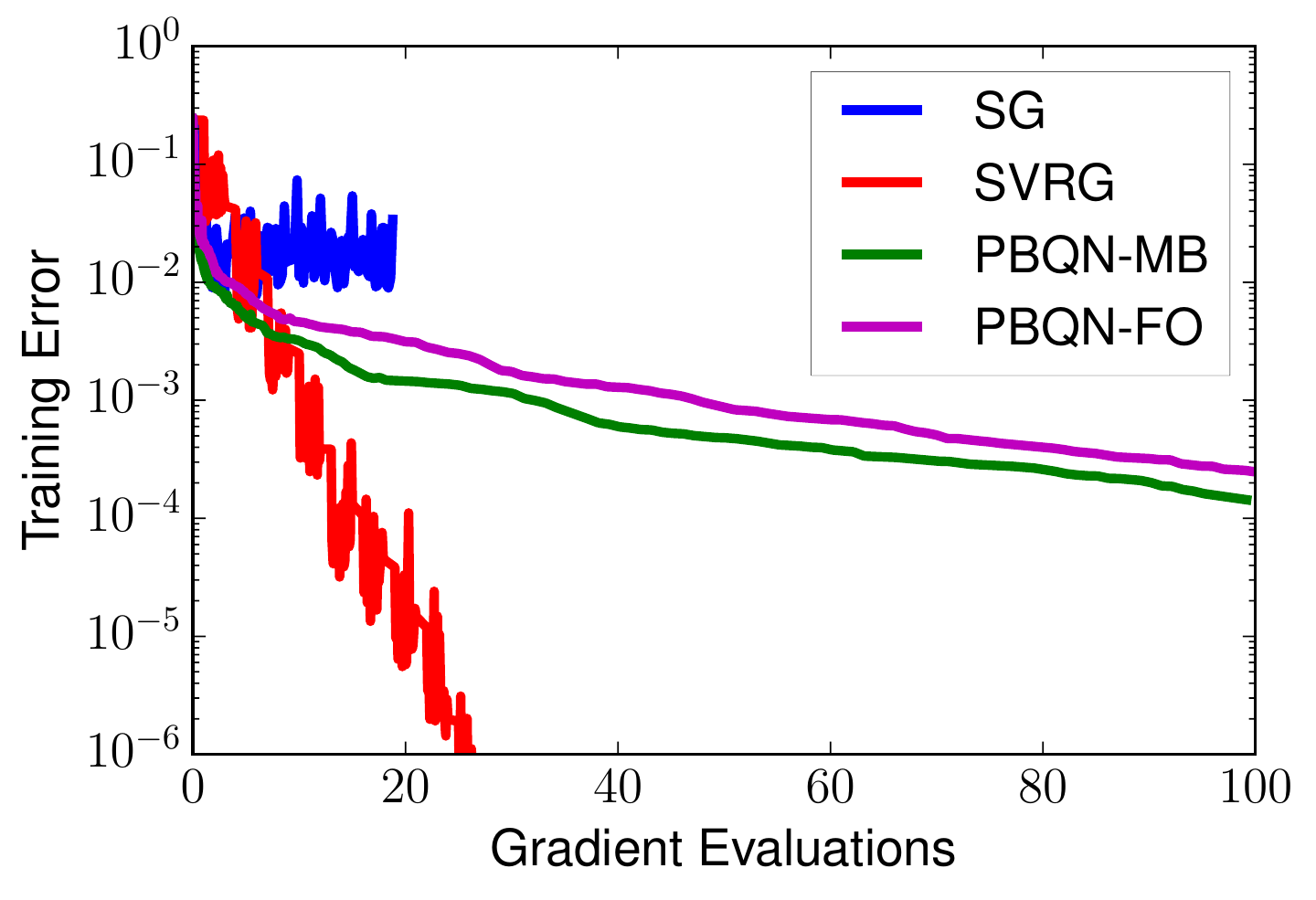}
		\includegraphics[width=0.33\linewidth]{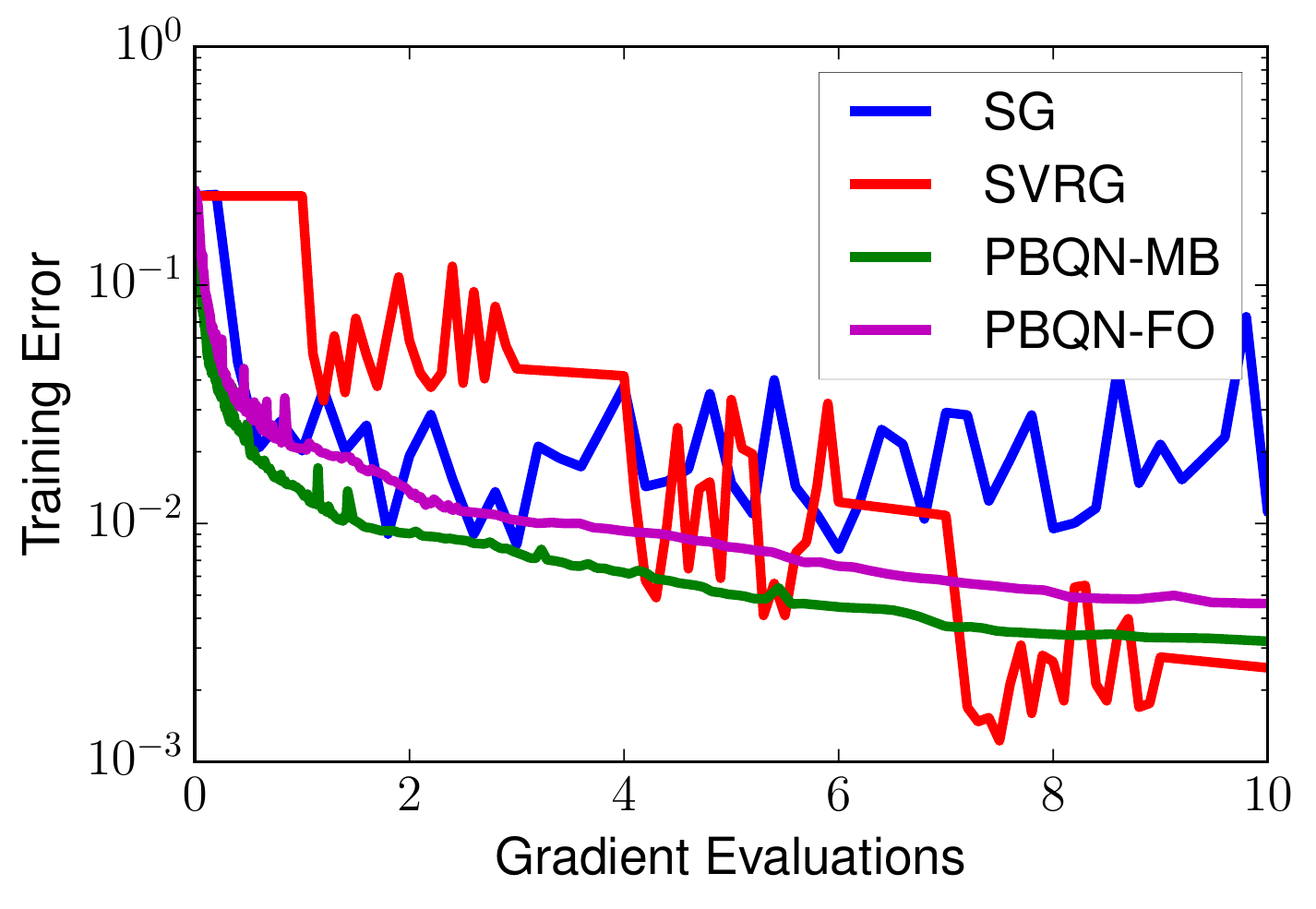}
		\includegraphics[width=0.33\linewidth]{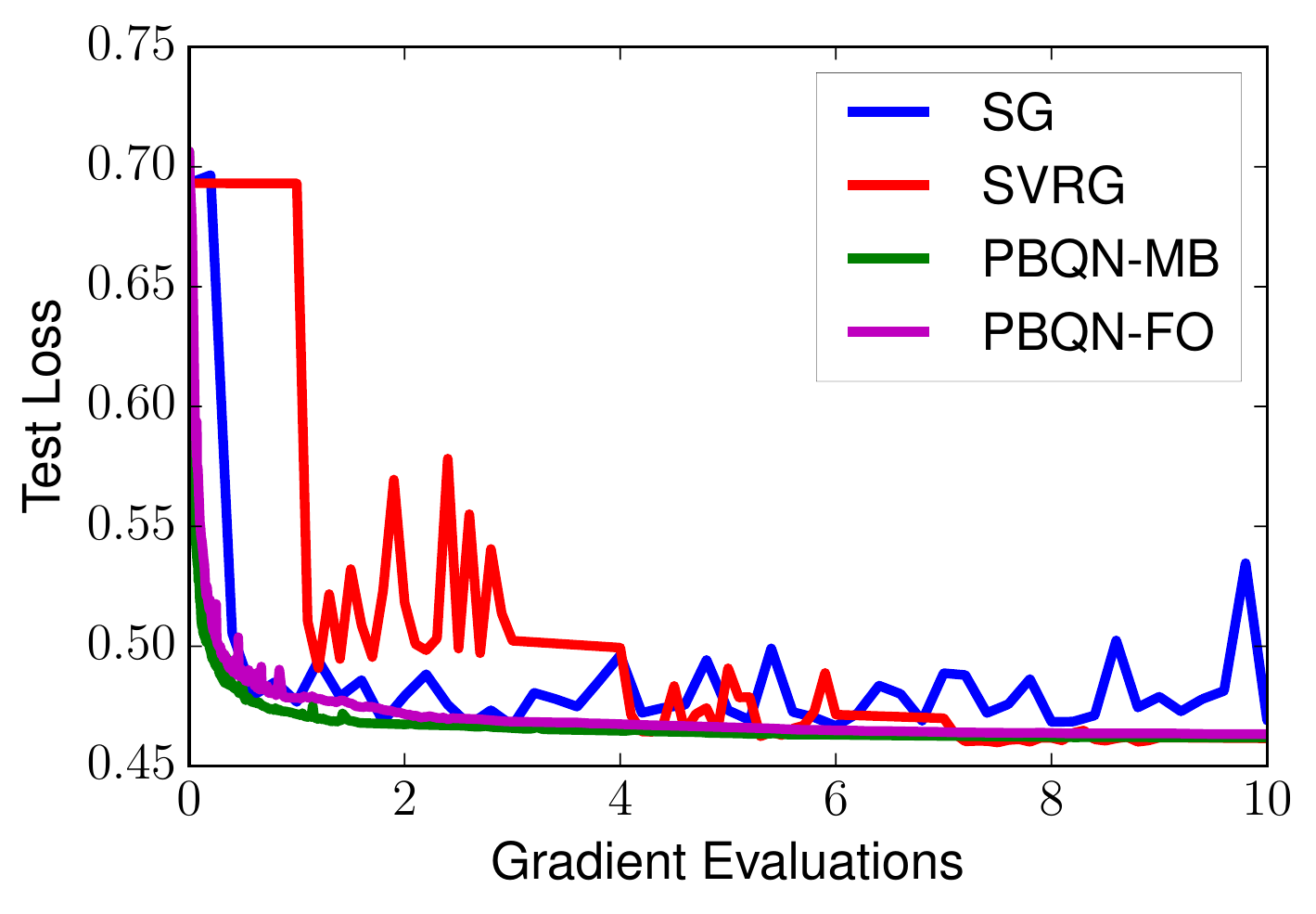}
		\includegraphics[width=0.33\linewidth]{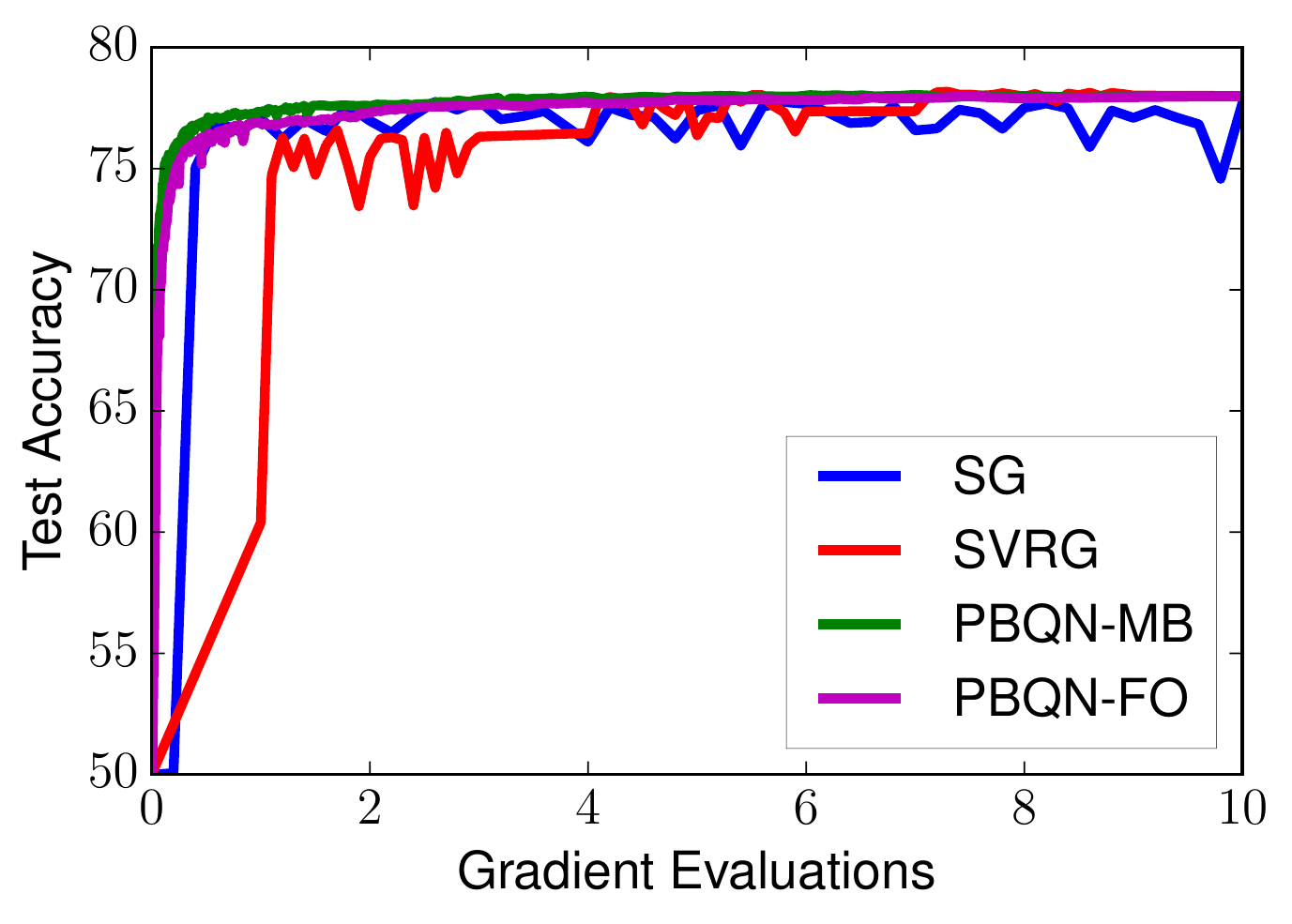}
		\includegraphics[width=0.33\linewidth]{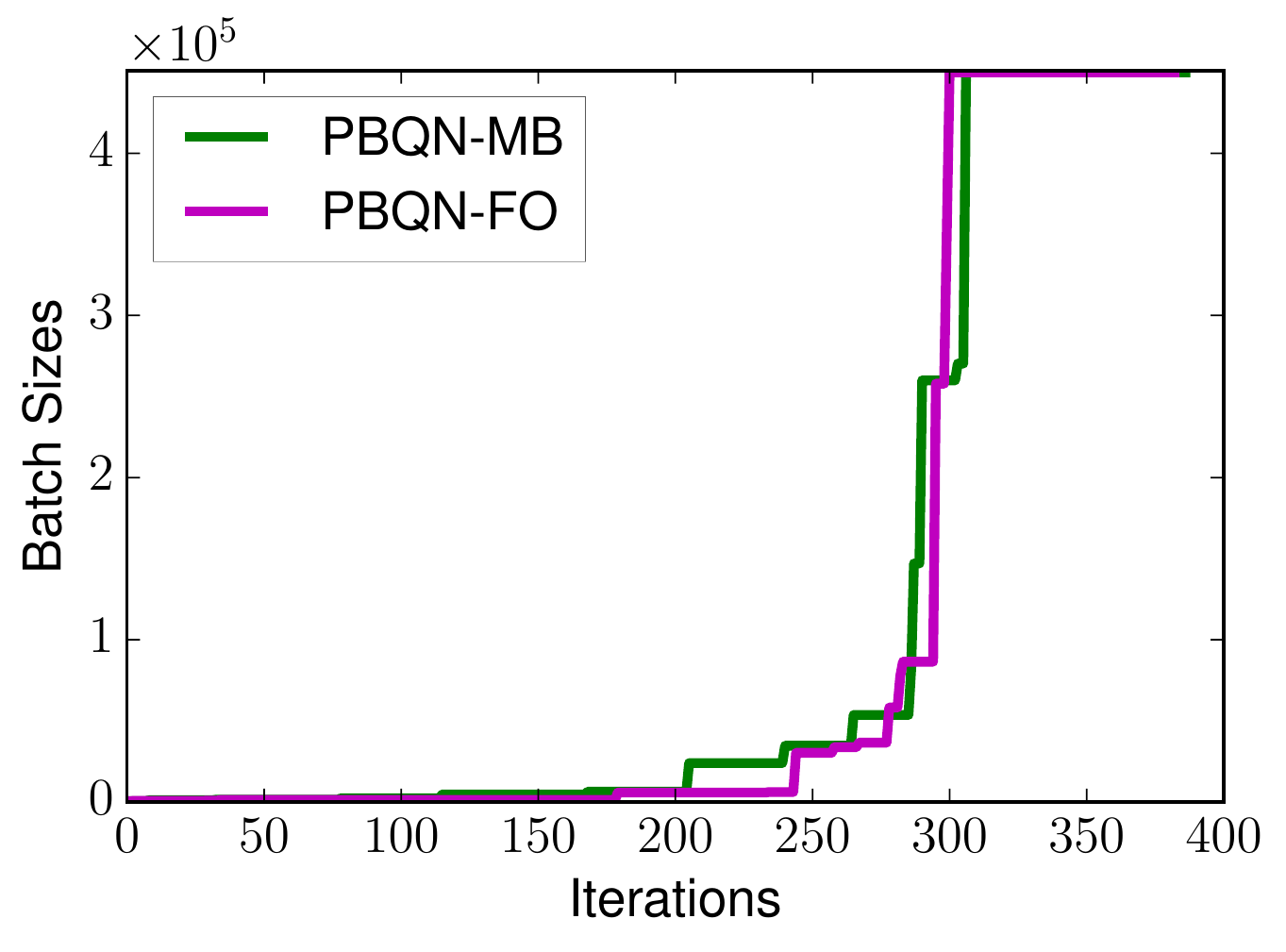}
		\includegraphics[width=0.33\linewidth]{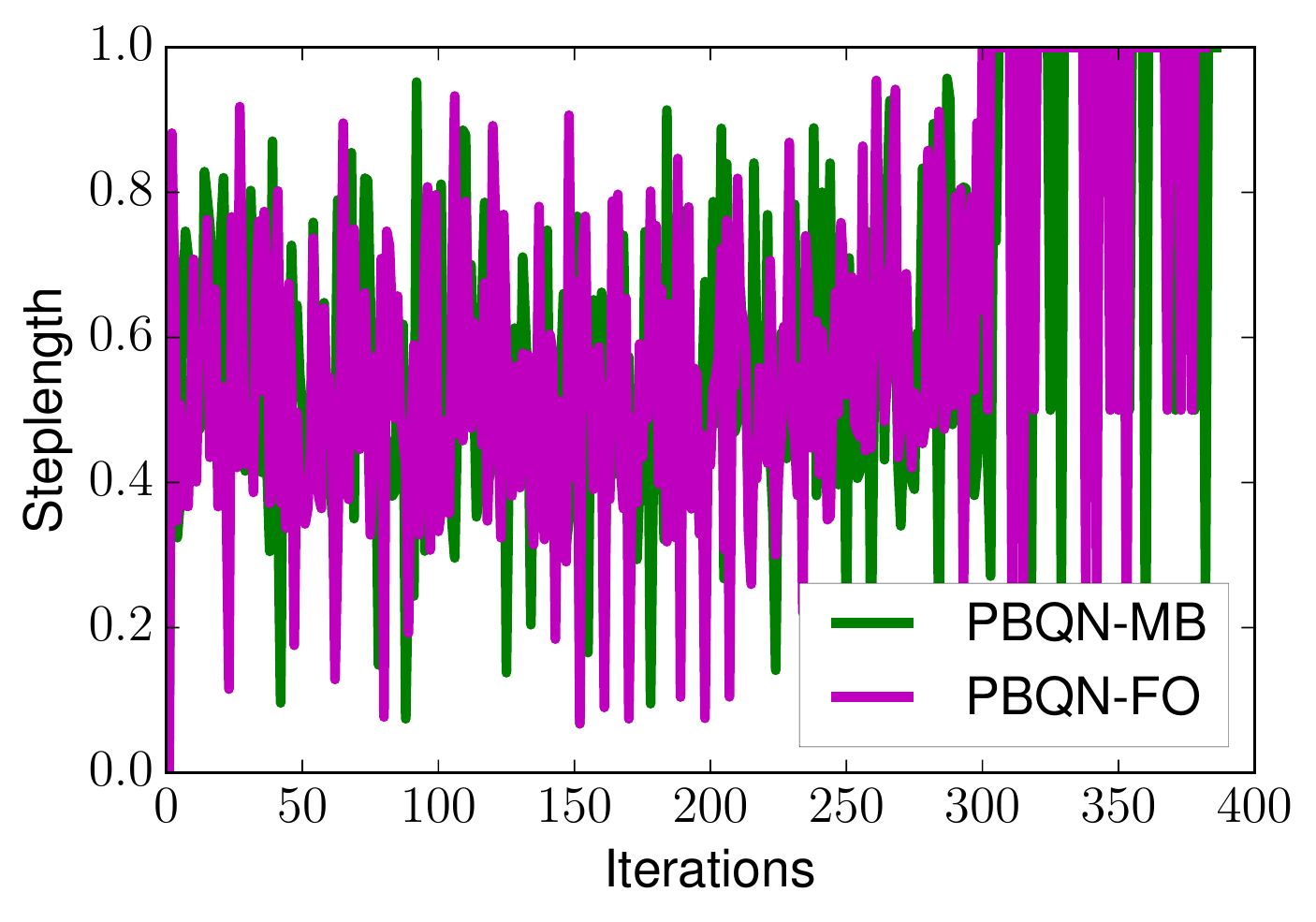}
		\par\end{centering}
	\caption{ \textbf{alpha dataset:} Performance of the progressive batching L-BFGS methods, with multi-batch (MB) (25\% overlap) and full-overlap (FO) approaches, and the SG and SVRG methods.}
	\label{exp:alpha} 
\end{figure*}

\begin{figure*}[!htp]
	\begin{centering}
		\includegraphics[width=0.33\linewidth]{covtype_gradevals_train_loss_wide.pdf}
		\includegraphics[width=0.33\linewidth]{covtype_gradevals_train_loss_tight.pdf}
		\includegraphics[width=0.33\linewidth]{covtype_gradevals_test_loss_tight.pdf}
		\includegraphics[width=0.33\linewidth]{covtype_gradevals_test_acc_tight.pdf}
		\includegraphics[width=0.33\linewidth]{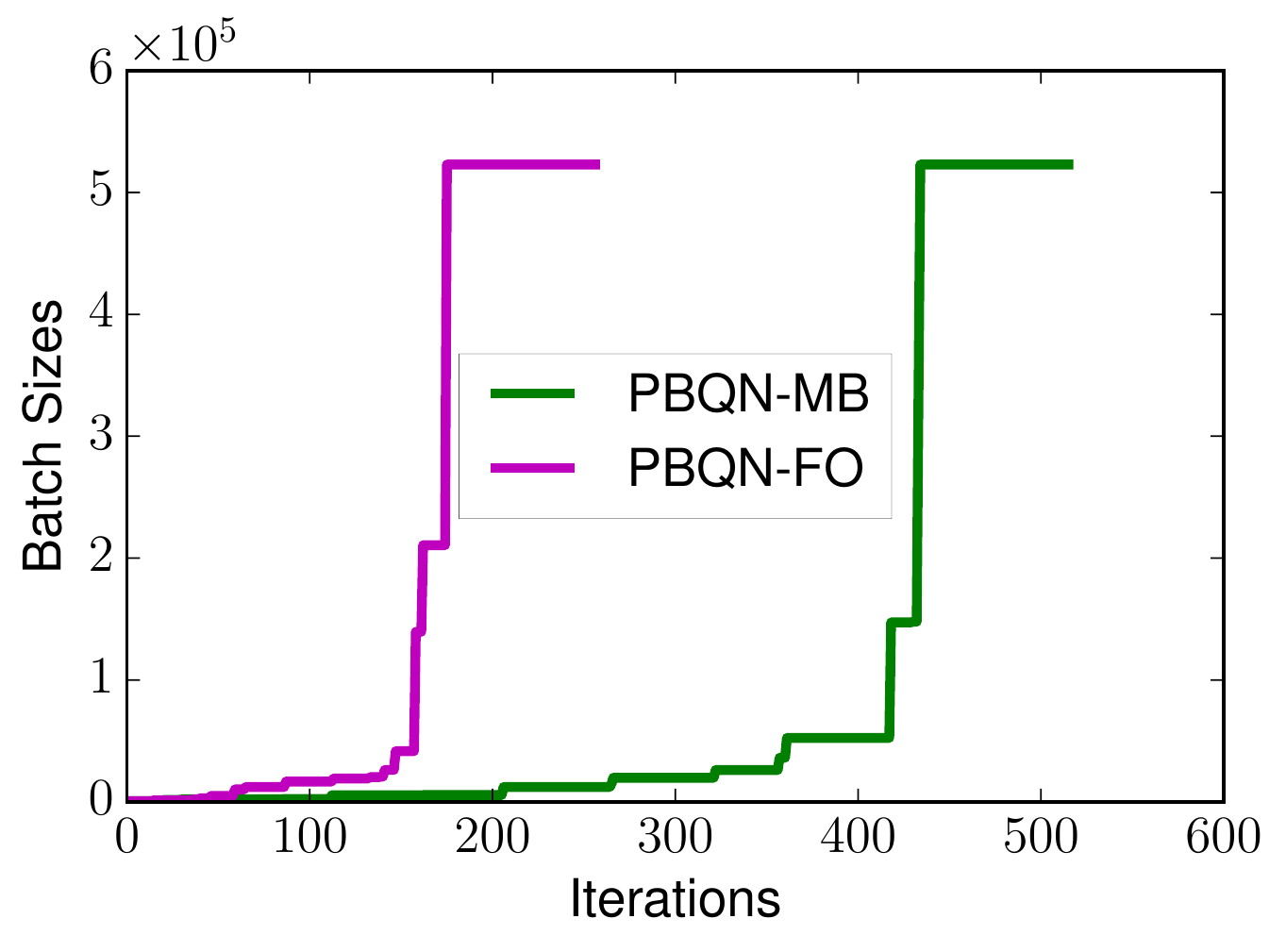}
		\includegraphics[width=0.33\linewidth]{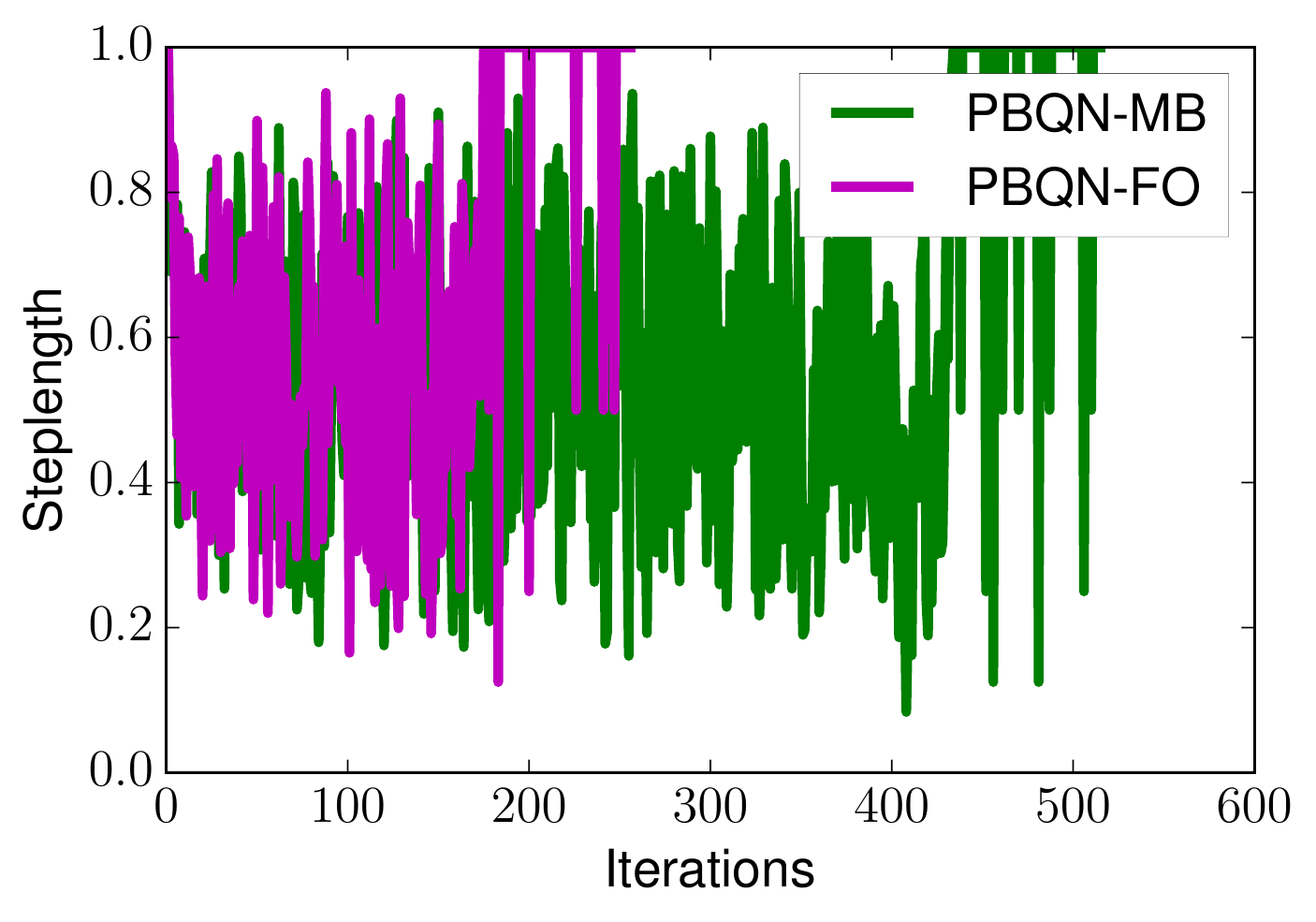}
		\par\end{centering}
	\caption{ \textbf{covertype dataset:} Performance of the progressive batching L-BFGS methods, with multi-batch (MB) (25\% overlap) and full-overlap (FO) approaches, and the SG and SVRG methods.}
	\label{exp:covertype} 
\end{figure*}

\begin{figure*}[!htp]
	\begin{centering}
		\includegraphics[width=0.33\linewidth]{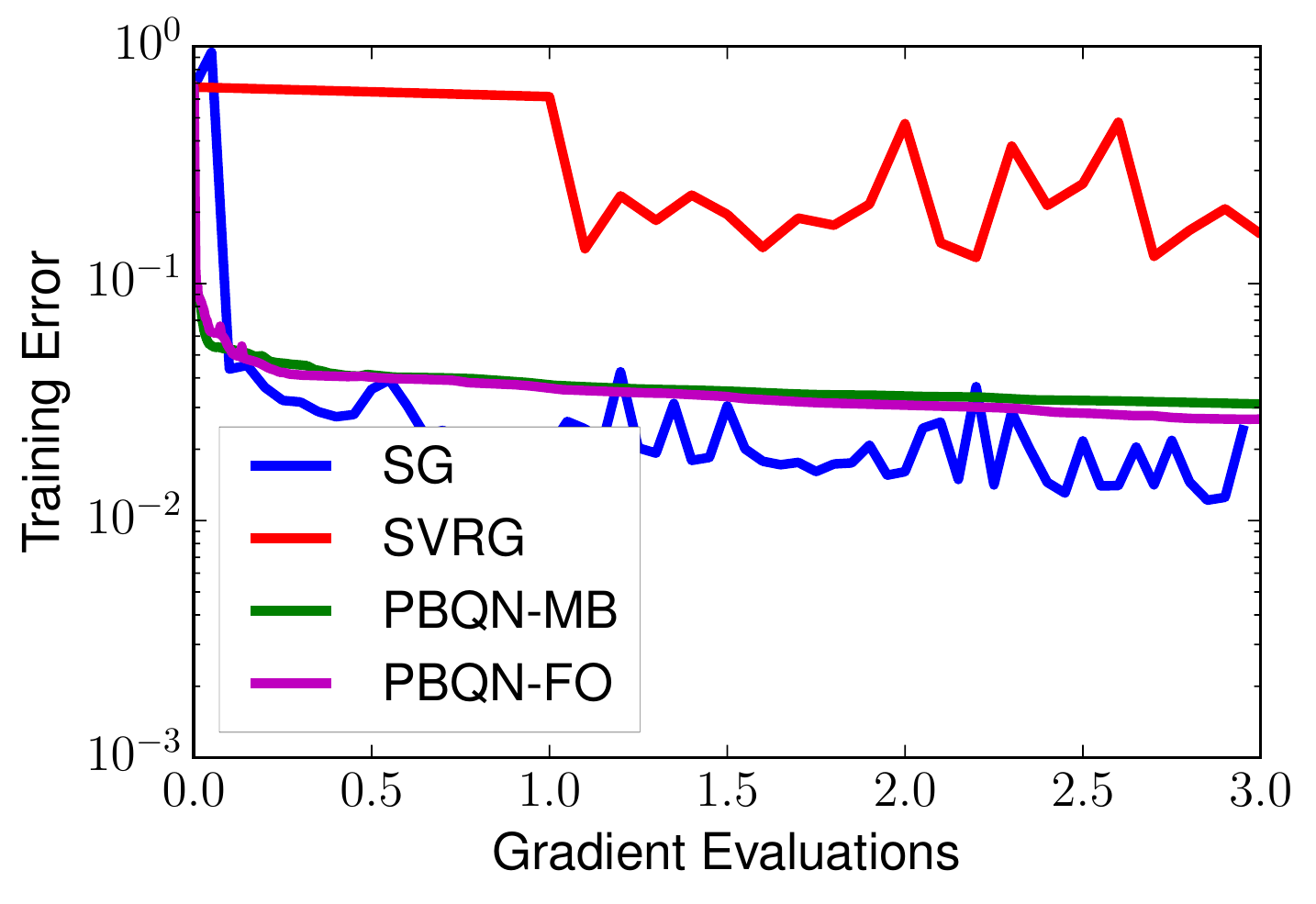}
		\includegraphics[width=0.33\linewidth]{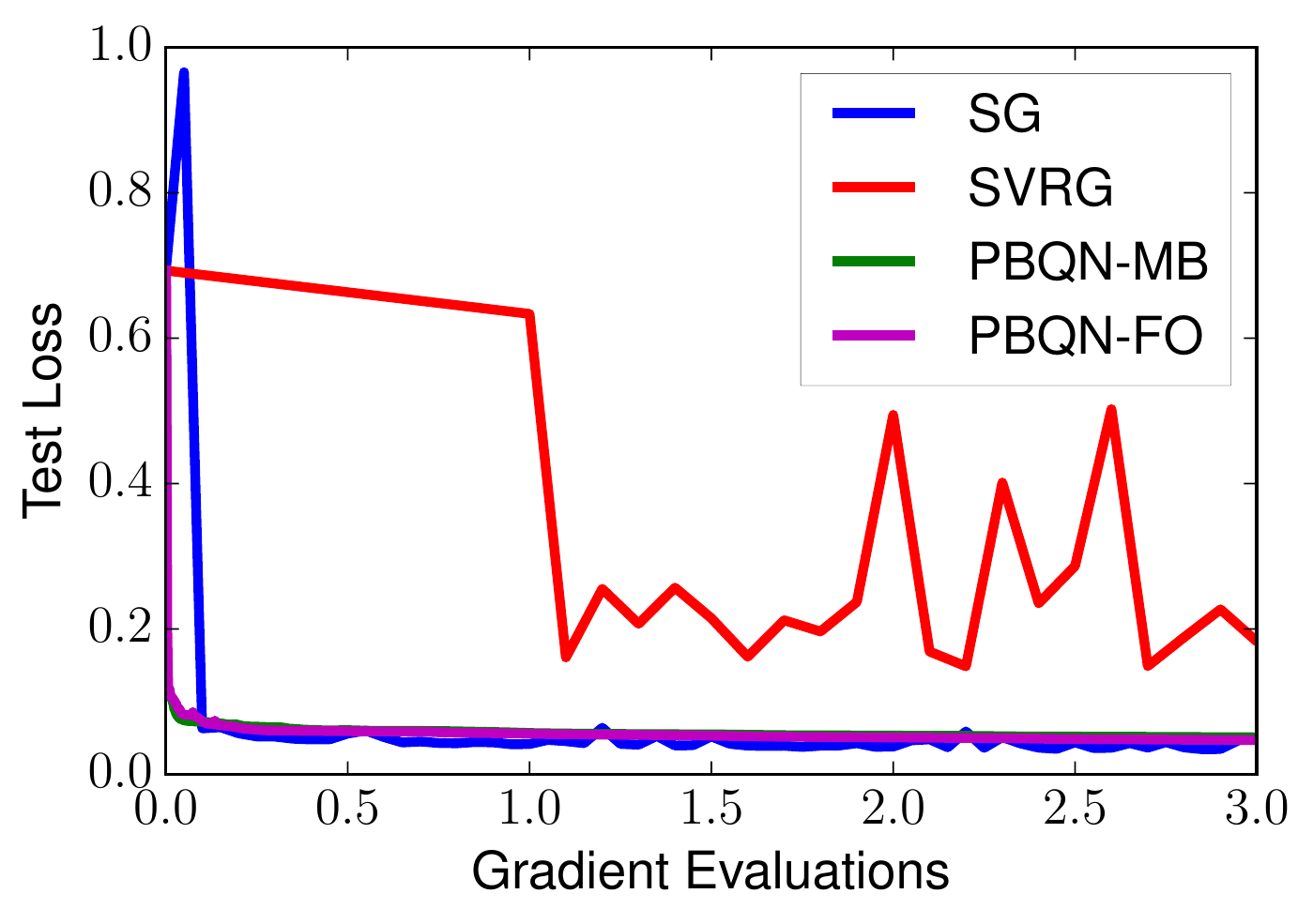}
		
		\includegraphics[width=0.33\linewidth]{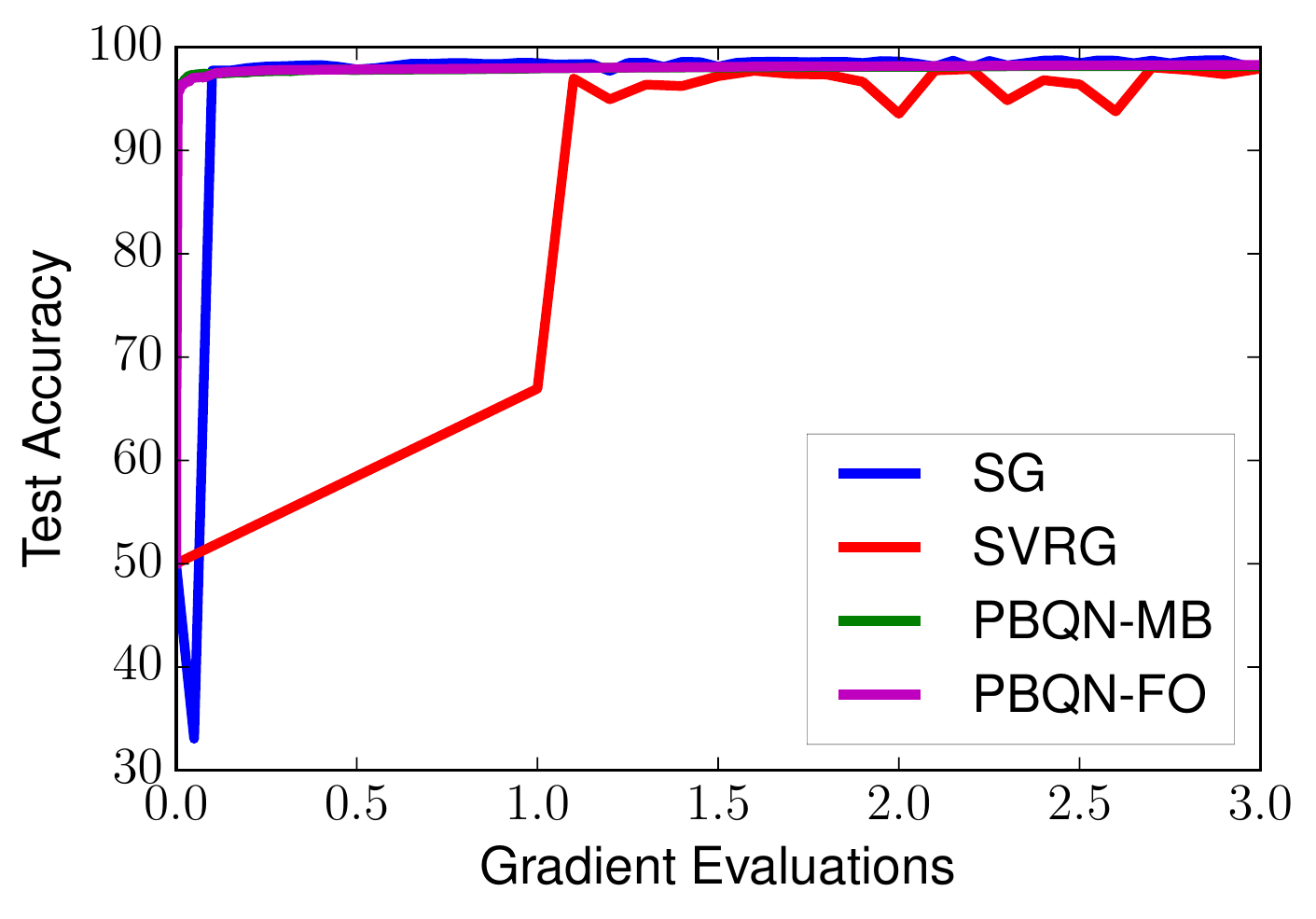}
		\includegraphics[width=0.33\linewidth]{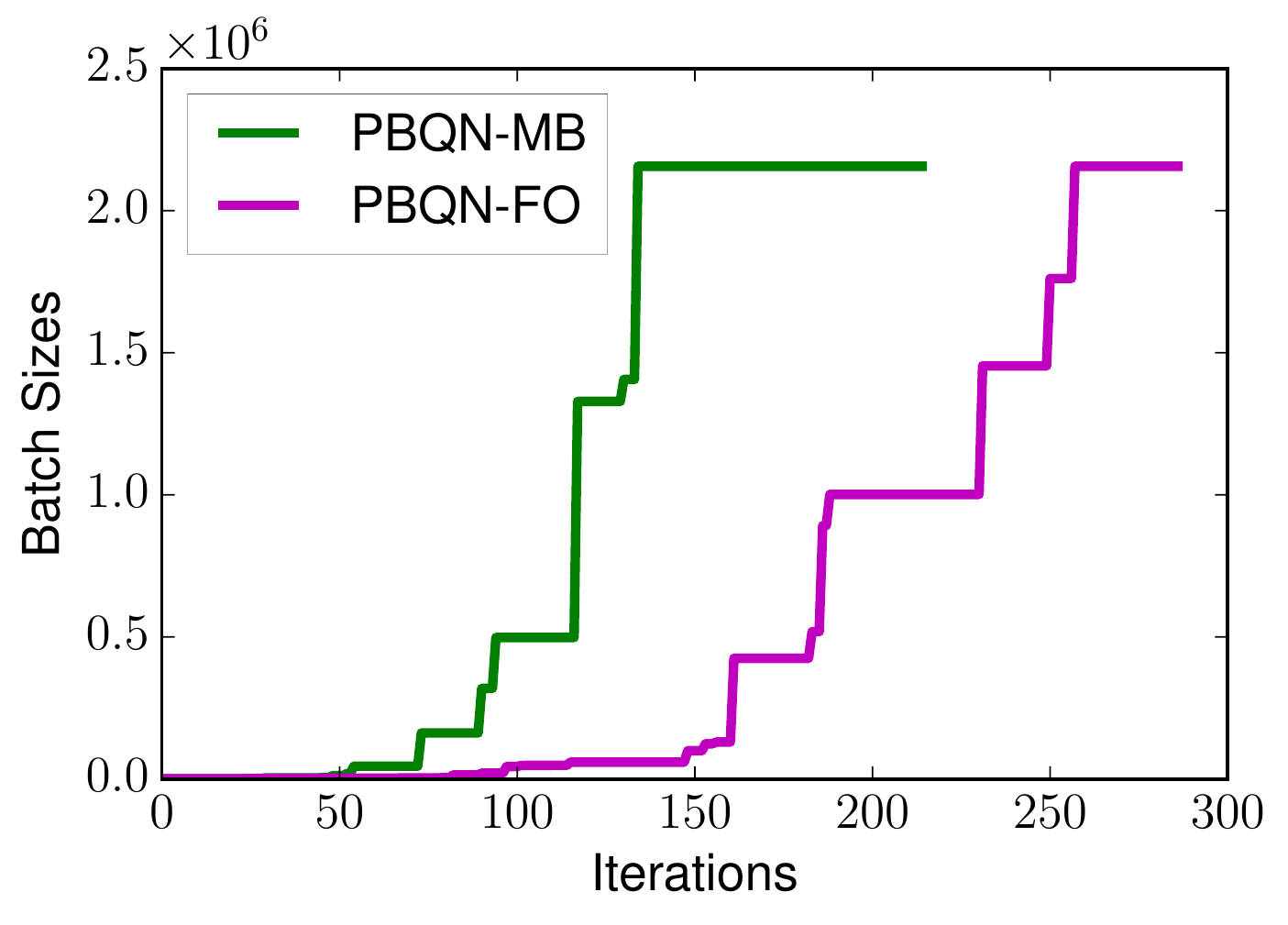}
		\includegraphics[width=0.33\linewidth]{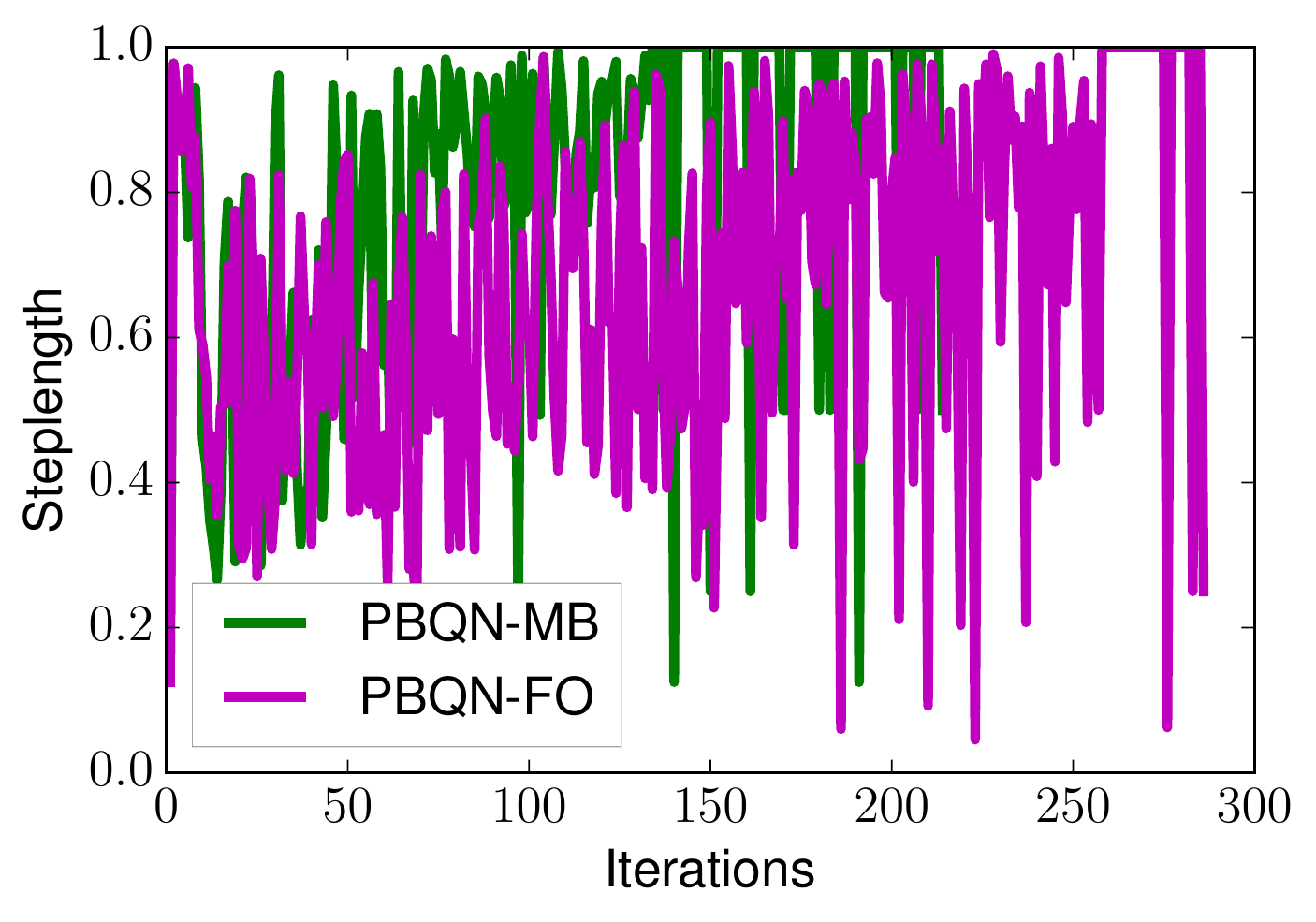}
		\par\end{centering}
	\caption{ \textbf{url dataset:} Performance of the progressive batching L-BFGS methods, with multi-batch (MB) (25\% overlap) and full-overlap (FO) approaches, and the SG and SVRG methods. Note that we only ran the SG and SVRG algorithms for 3 gradient evaluations since the equivalent number of iterations already reached of order of magnitude $10^7$.}
	\label{exp:url} 
\end{figure*}

\newpage
\subsection{Neural Network Experiments}

We describe each neural network architecture below. We  plot the training loss, test loss and test accuracy against the total number of iterations and gradient evaluations. We also report the behavior of the batch sizes and steplengths for both variants of the PBQN method.

\subsubsection{CIFAR-10 Convolutional Network ($\mathcal{C}$) Architecture}

The small convolutional neural network (ConvNet) is a 2-layer convolutional network with two alternating stages of $5 \times 5$ kernels and $2 \times 2$ max pooling followed by a fully connected layer with 1000 ReLU units. The first convolutional layer yields 6 output channels and the second convolutional layer yields 16 output channels.

\subsubsection{CIFAR-10 and MNIST AlexNet-like Network ($\mathcal{A}_1, \mathcal{A}_2$) Architecture}

The larger convolutional network (AlexNet) is an adaptation of the AlexNet architecture \cite{krizhevsky2012imagenet} for CIFAR-10 and MNIST. The CIFAR-10 version consists of three convolutional layers with max pooling followed by two fully-connected layers. The first convolutional layer uses a $5 \times 5$ kernel with a stride of 2 and 64 output channels. The second and third convolutional layers use a $3 \times 3$ kernel with a stride of 1 and 64 output channels. Following each convolutional layer is a set of ReLU activations and $3 \times 3$ max poolings with strides of 2. This is all followed by two fully-connected layers with 384 and 192 neurons with ReLU activations, respectively. The MNIST version of this network modifies this by only using a $2 \times 2$ max pooling layer after the last convolutional layer. 

\subsubsection{CIFAR-10 Residual Network ($\mathcal{R}$) Architecture}

The residual network (ResNet18) is a slight modification of the ImageNet ResNet18 architecture for CIFAR-10 \cite{he2016deep}. It follows the same architecture as ResNet18 for ImageNet but removes the global average pooling layer before the 1000 neuron fully-connected layer. ReLU activations and max poolings are included appropriately.

\begin{figure*}
\begin{centering}
\includegraphics[width=0.33\linewidth]{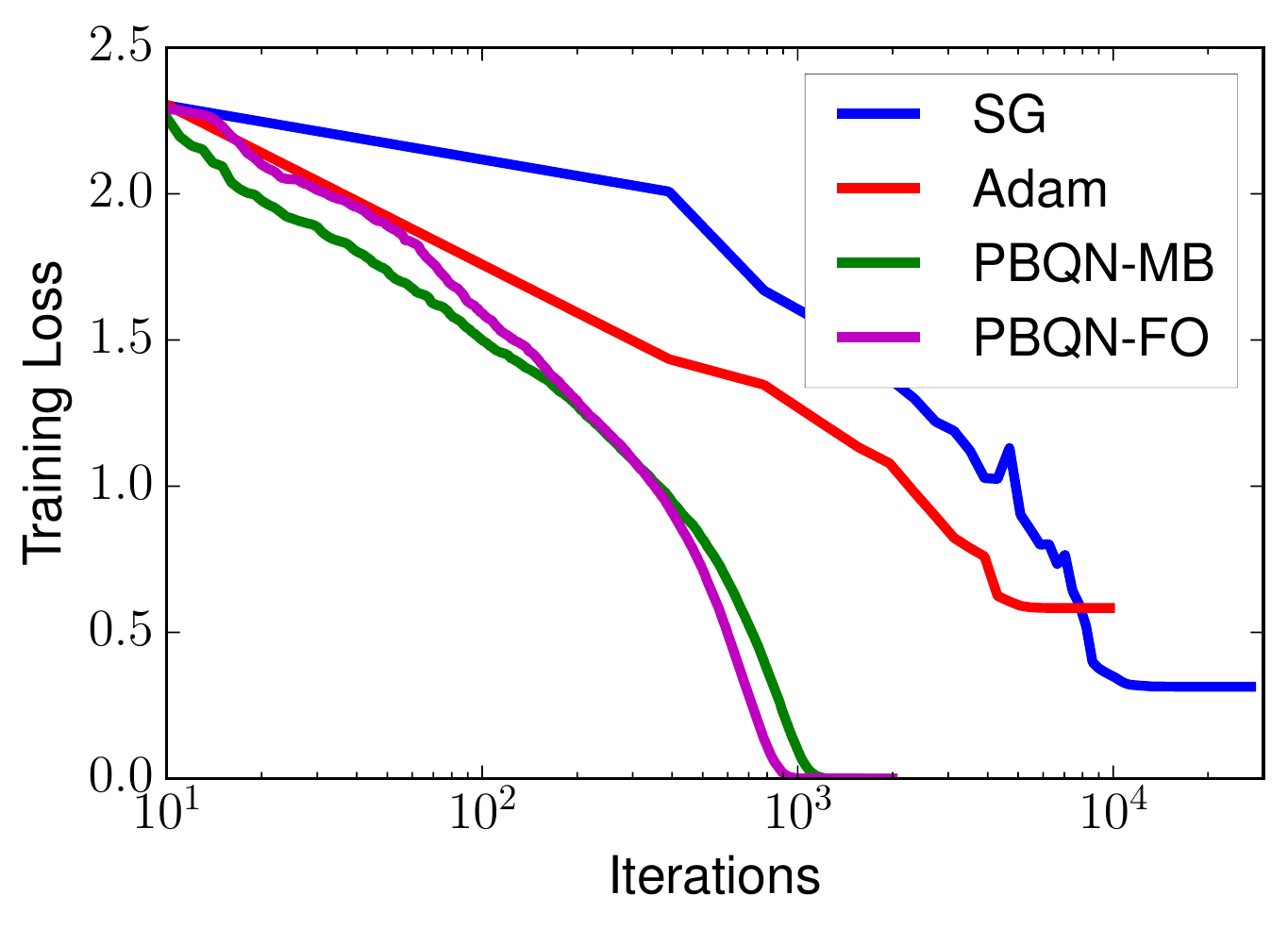}
\includegraphics[width=0.33\linewidth]{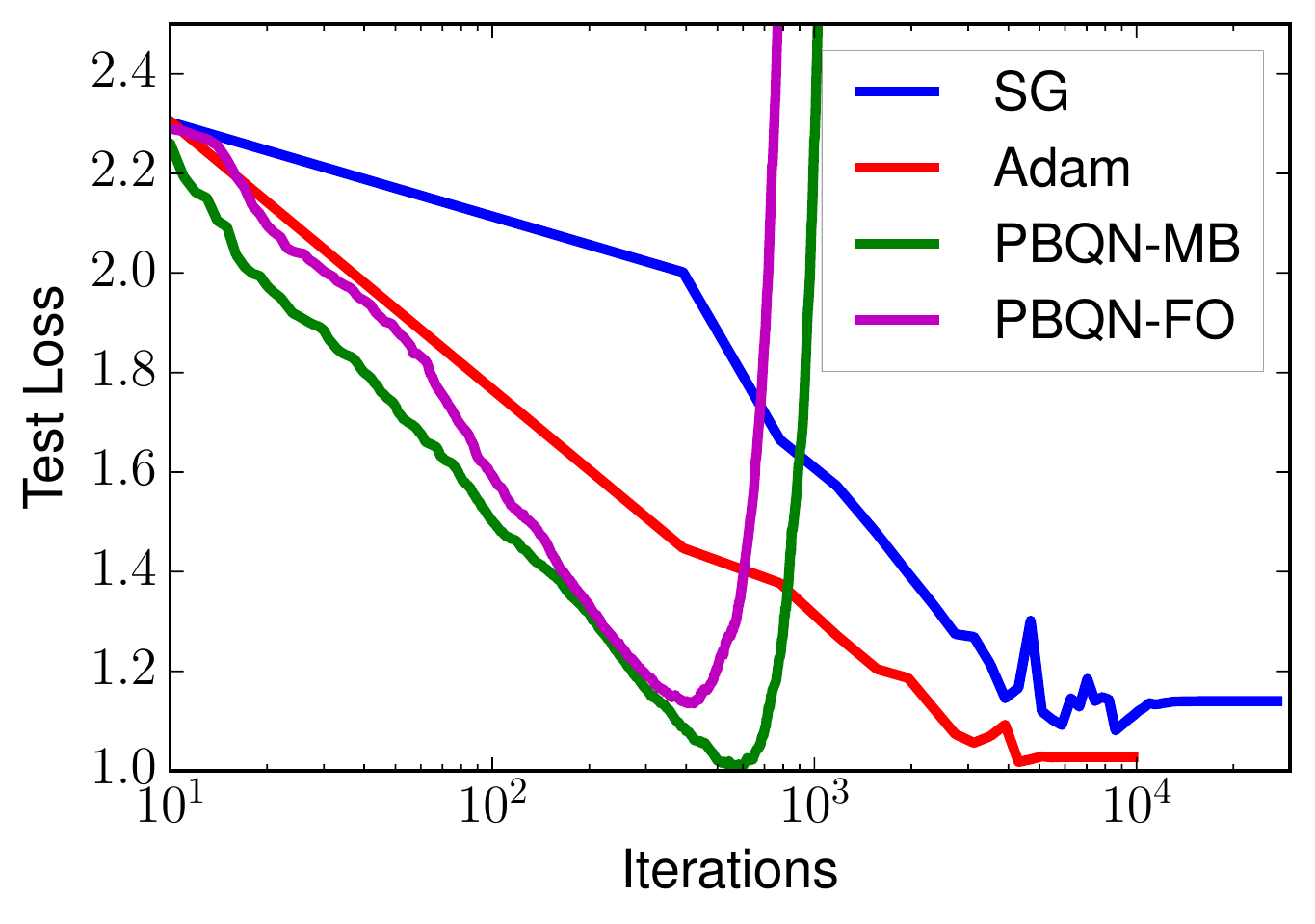}
\includegraphics[width=0.33\linewidth]{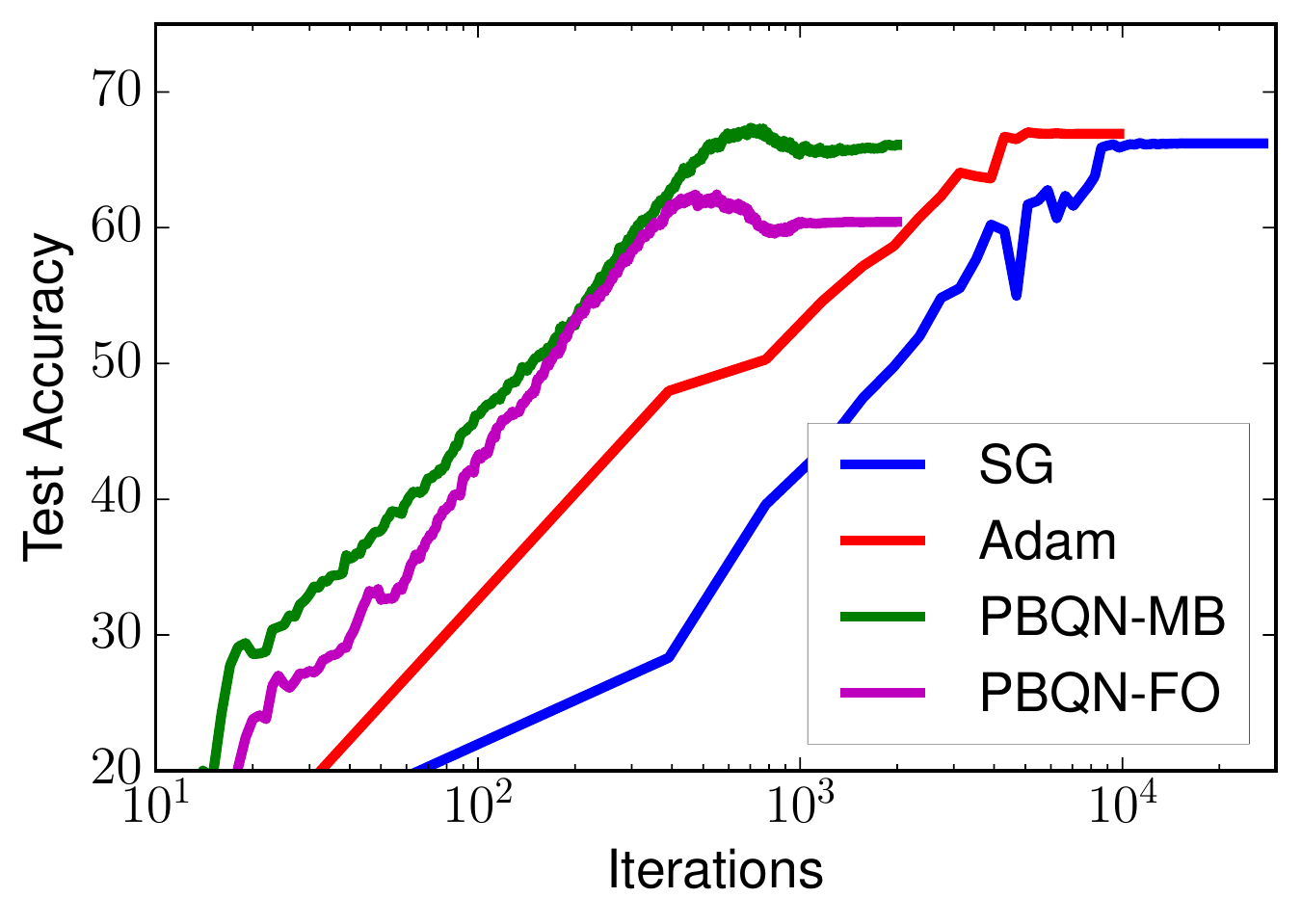}
\includegraphics[width=0.33\linewidth]{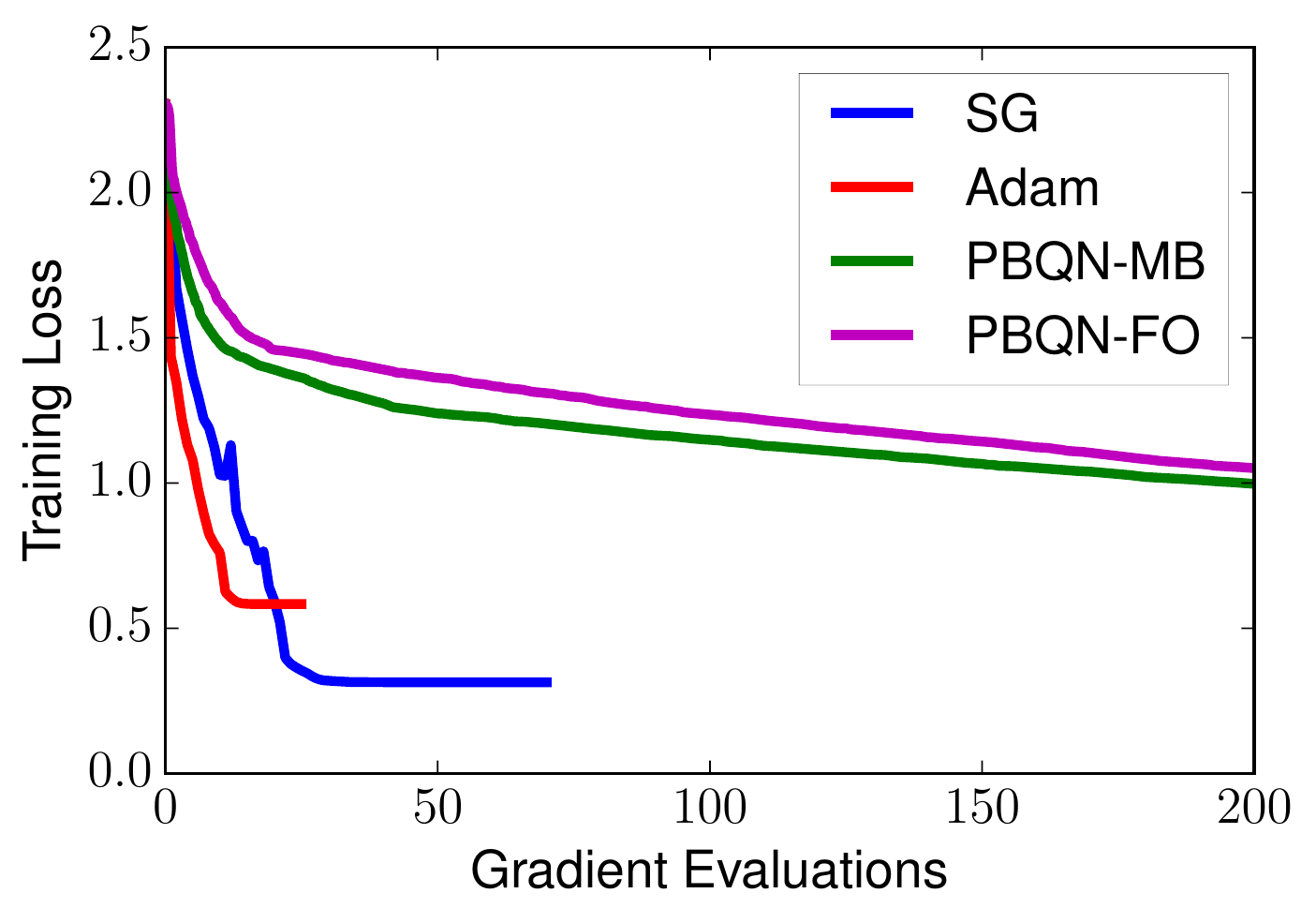}
\includegraphics[width=0.33\linewidth]{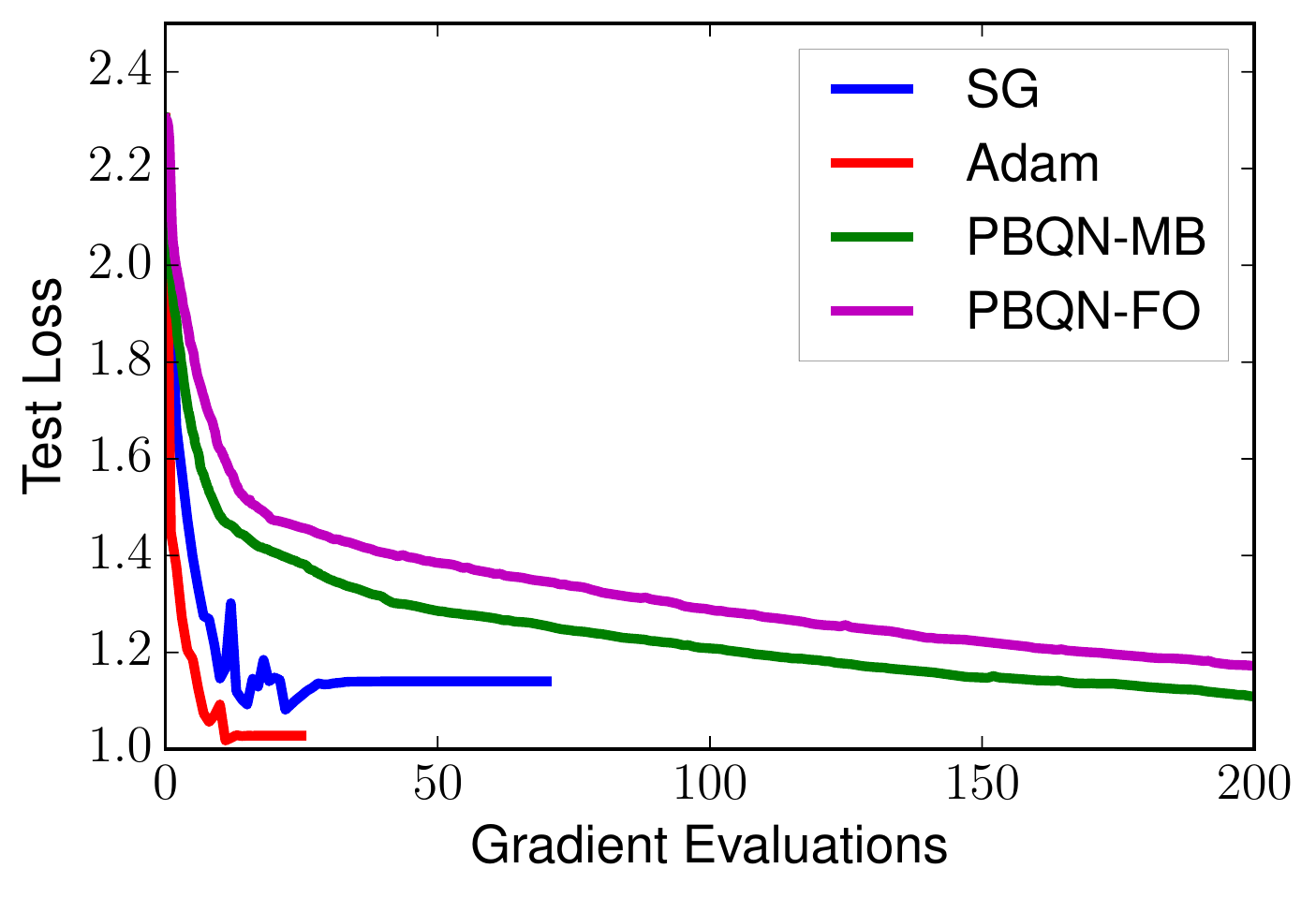}
\includegraphics[width=0.33\linewidth]{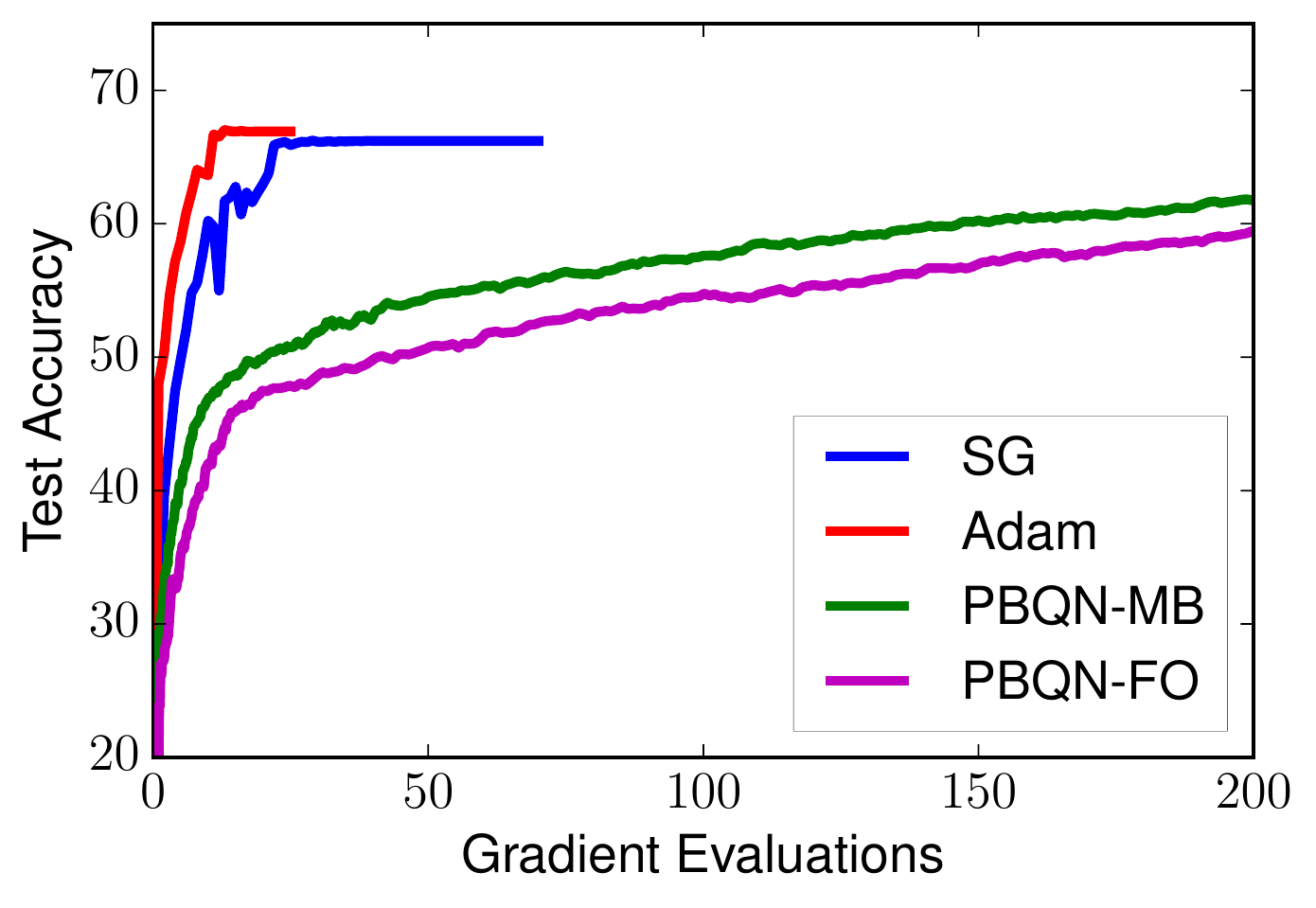}
\includegraphics[width=0.33\linewidth]{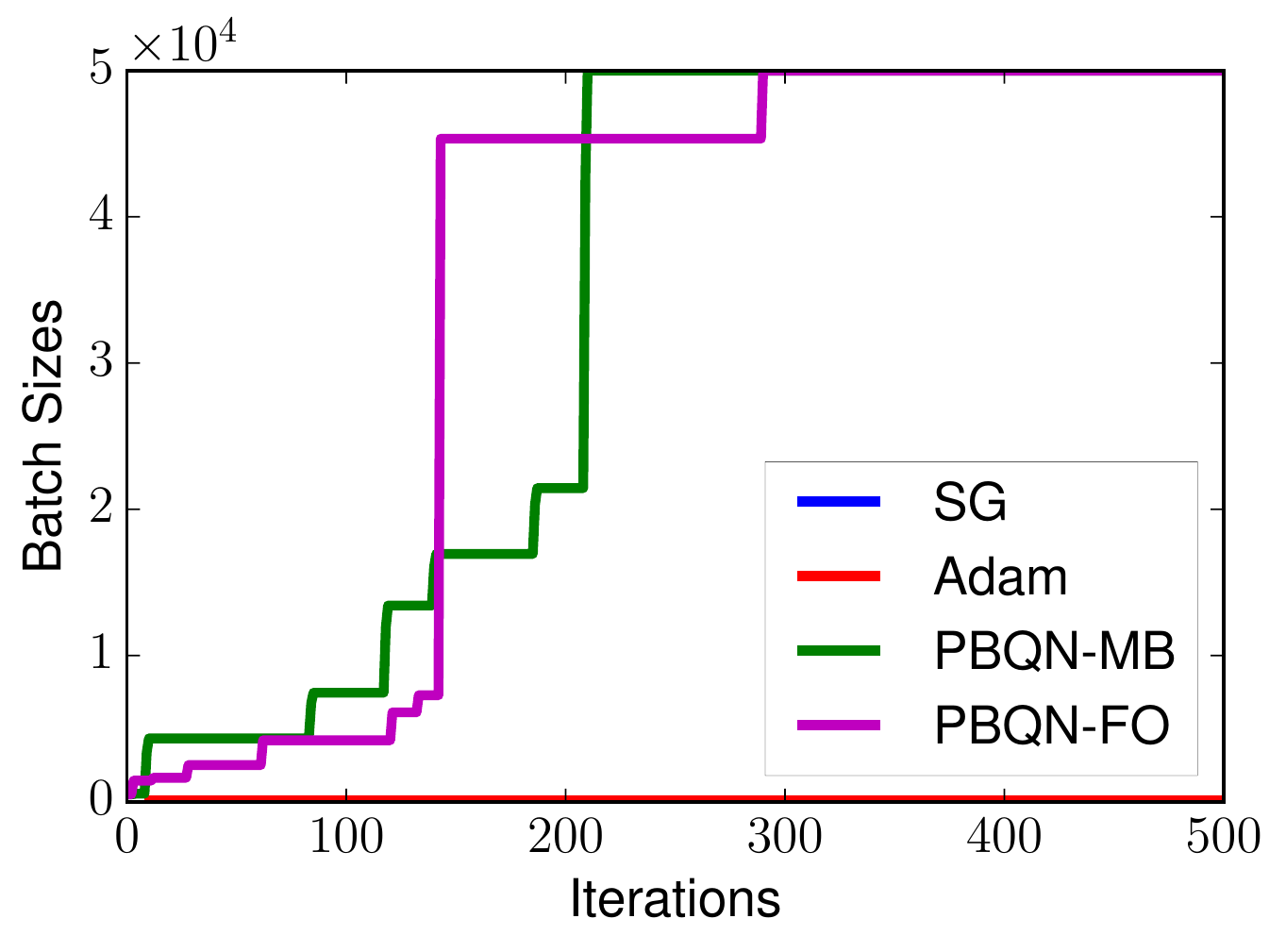}
\includegraphics[width=0.33\linewidth]{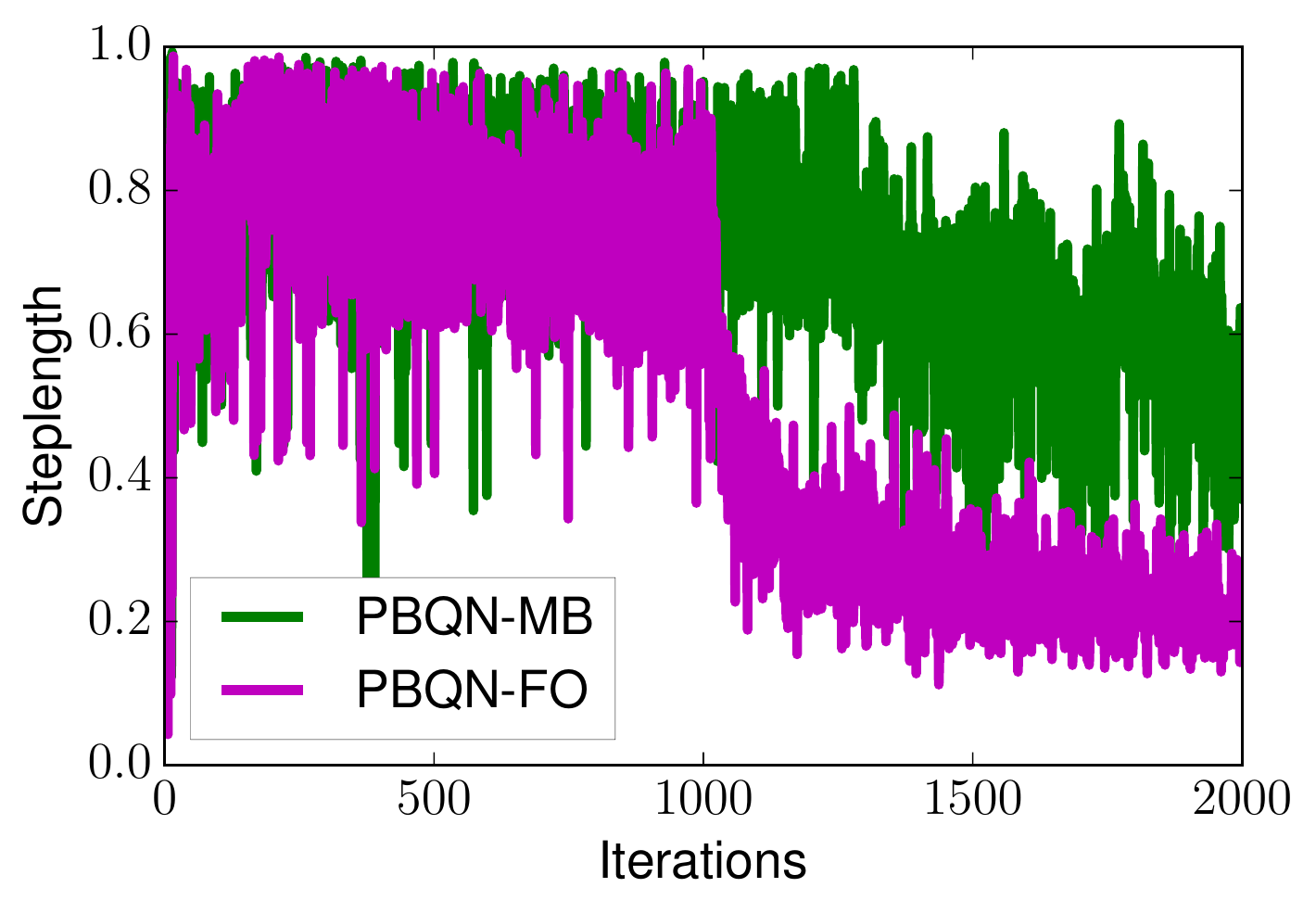}
\caption{\textbf{CIFAR-10 ConvNet $(\mathcal{C})$:} Performance of the progressive batching L-BFGS methods, with multi-batch (MB) (25\% overlap) and full-overlap (FO) approaches, and the SG and Adam methods. The best results for L-BFGS are achieved with $\theta = 0.9$.}
\label{exp:cifar10_cnn}
\end{centering}
\end{figure*}

\begin{figure*}
\begin{centering}
\includegraphics[width=0.33\linewidth]{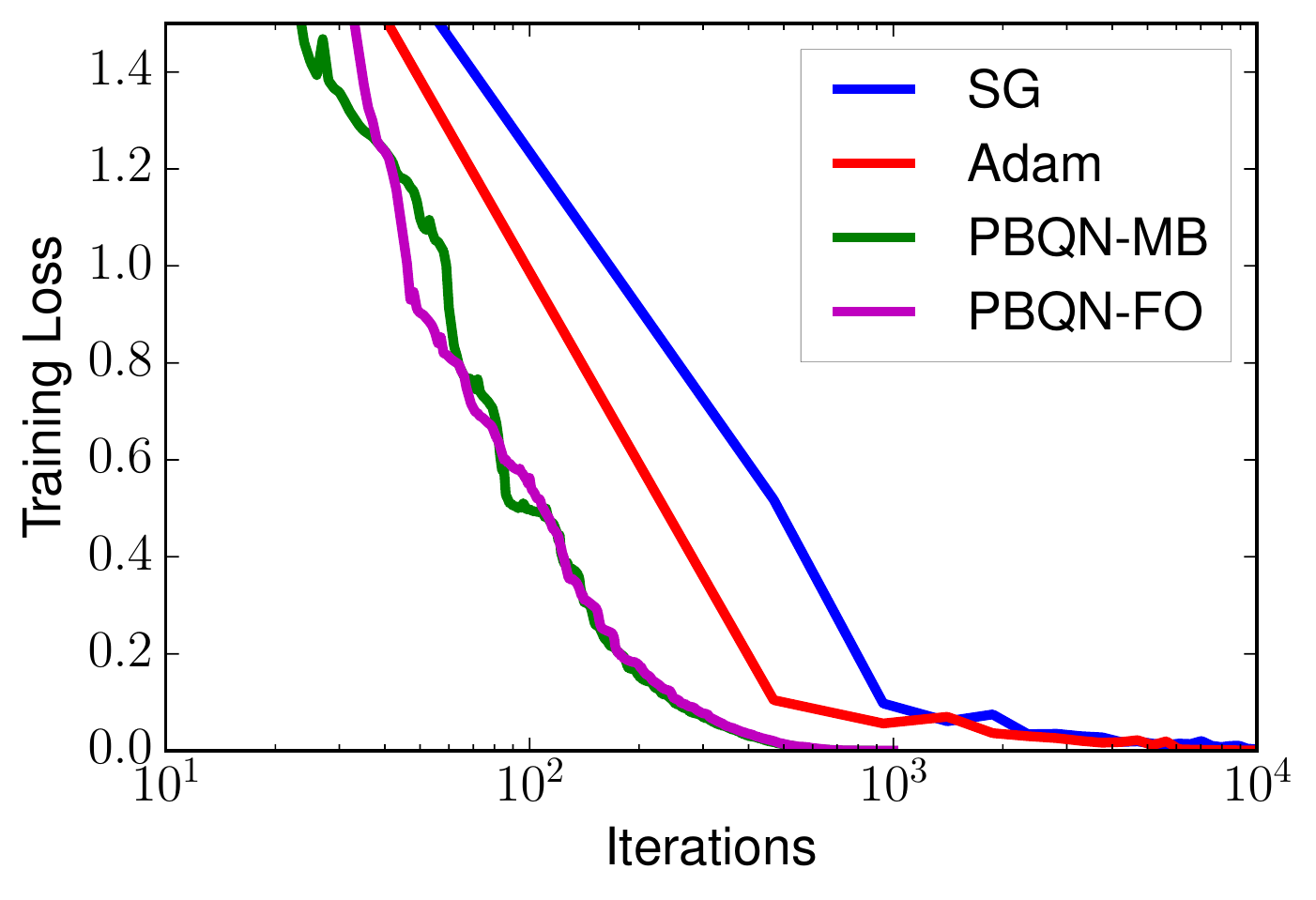}
\includegraphics[width=0.33\linewidth]{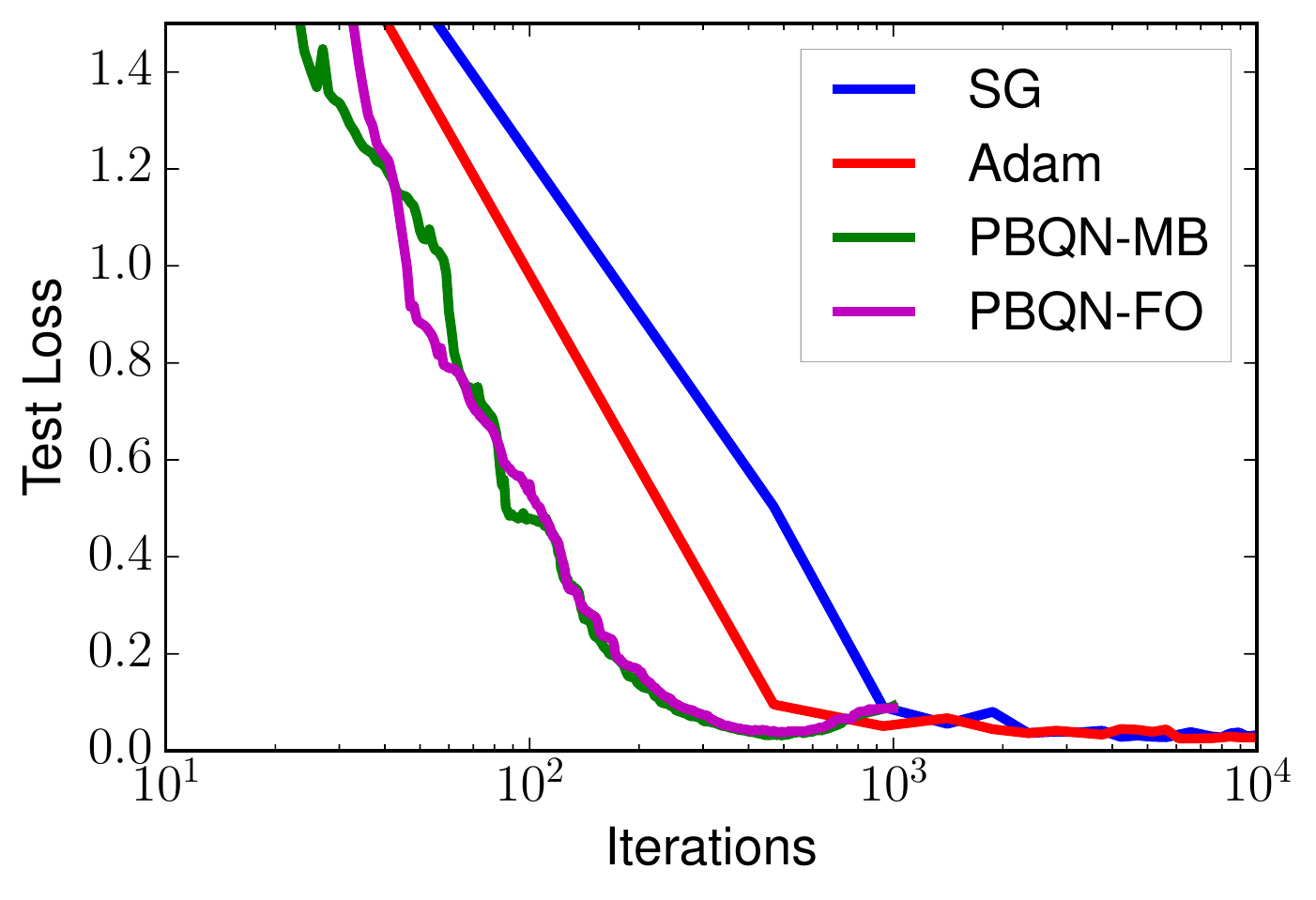}
\includegraphics[width=0.33\linewidth]{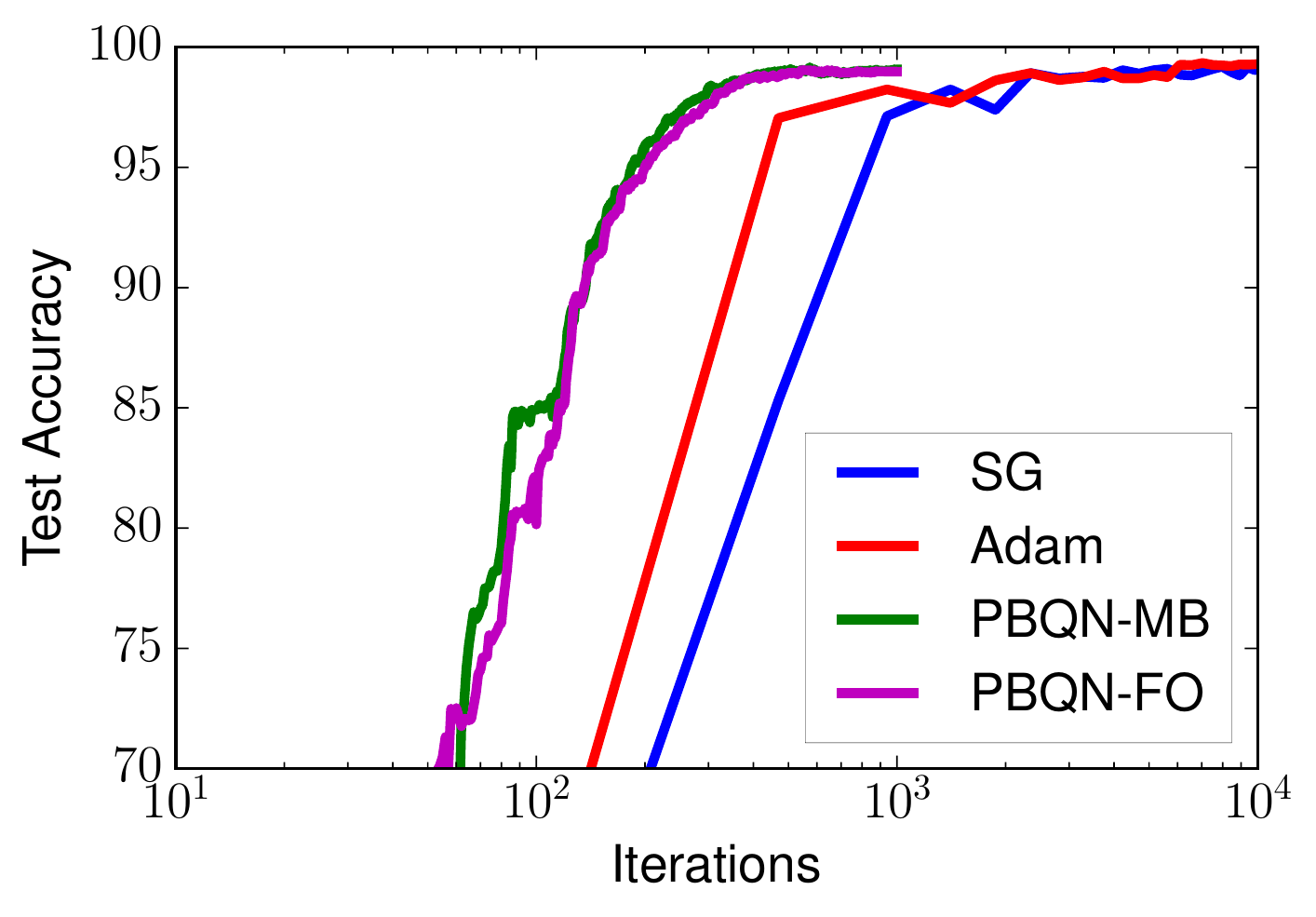}
\includegraphics[width=0.33\linewidth]{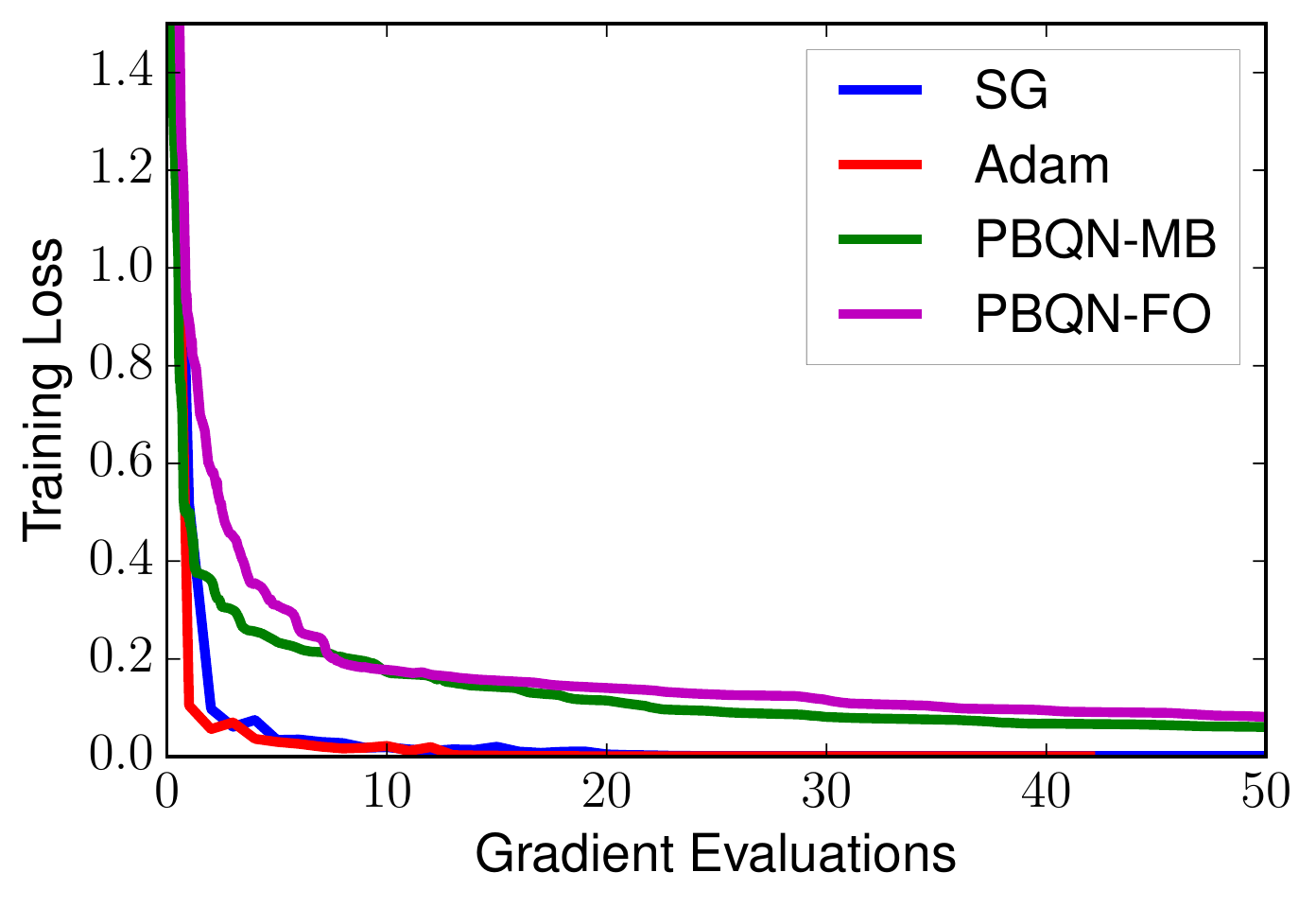}
\includegraphics[width=0.33\linewidth]{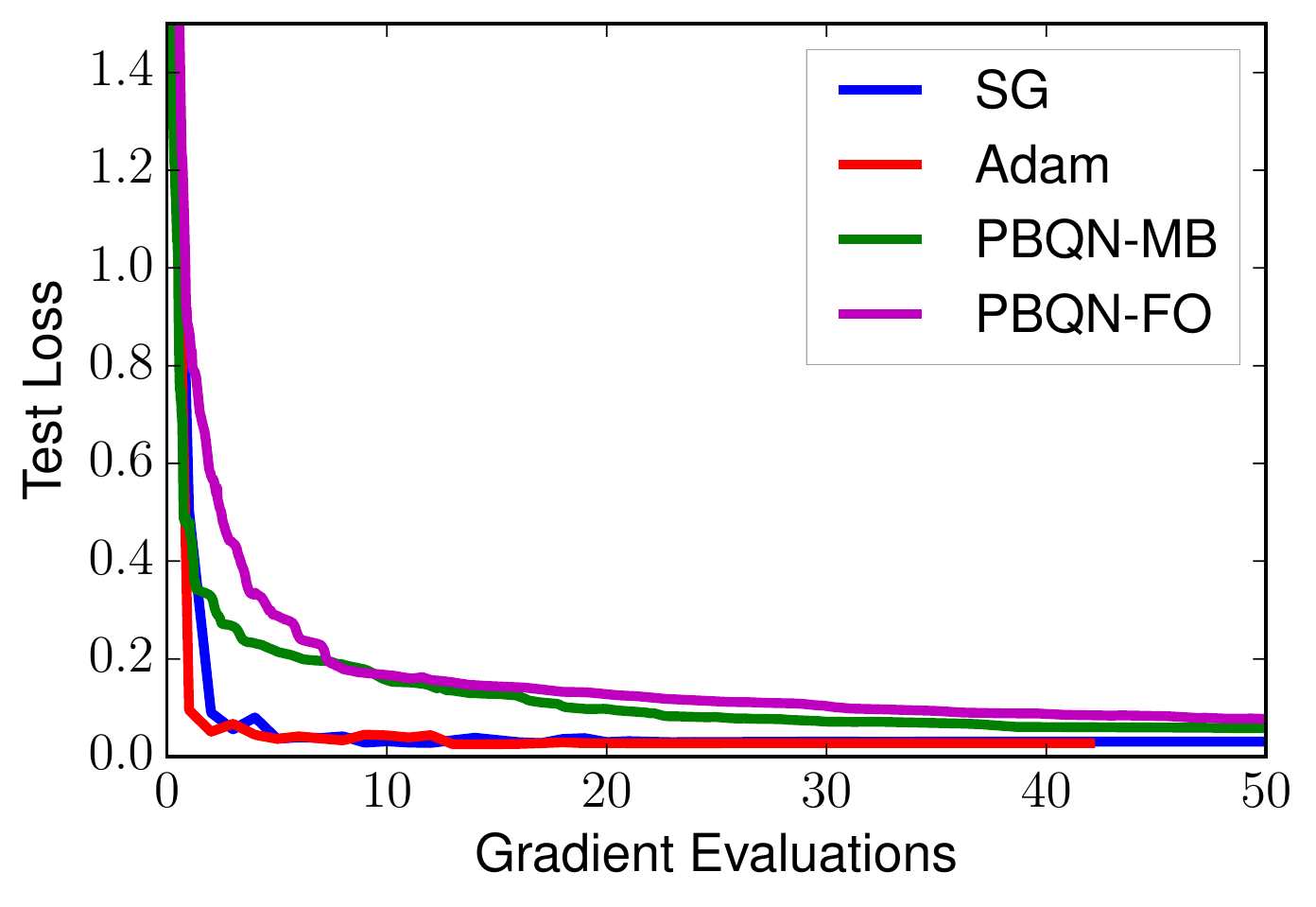}
\includegraphics[width=0.33\linewidth]{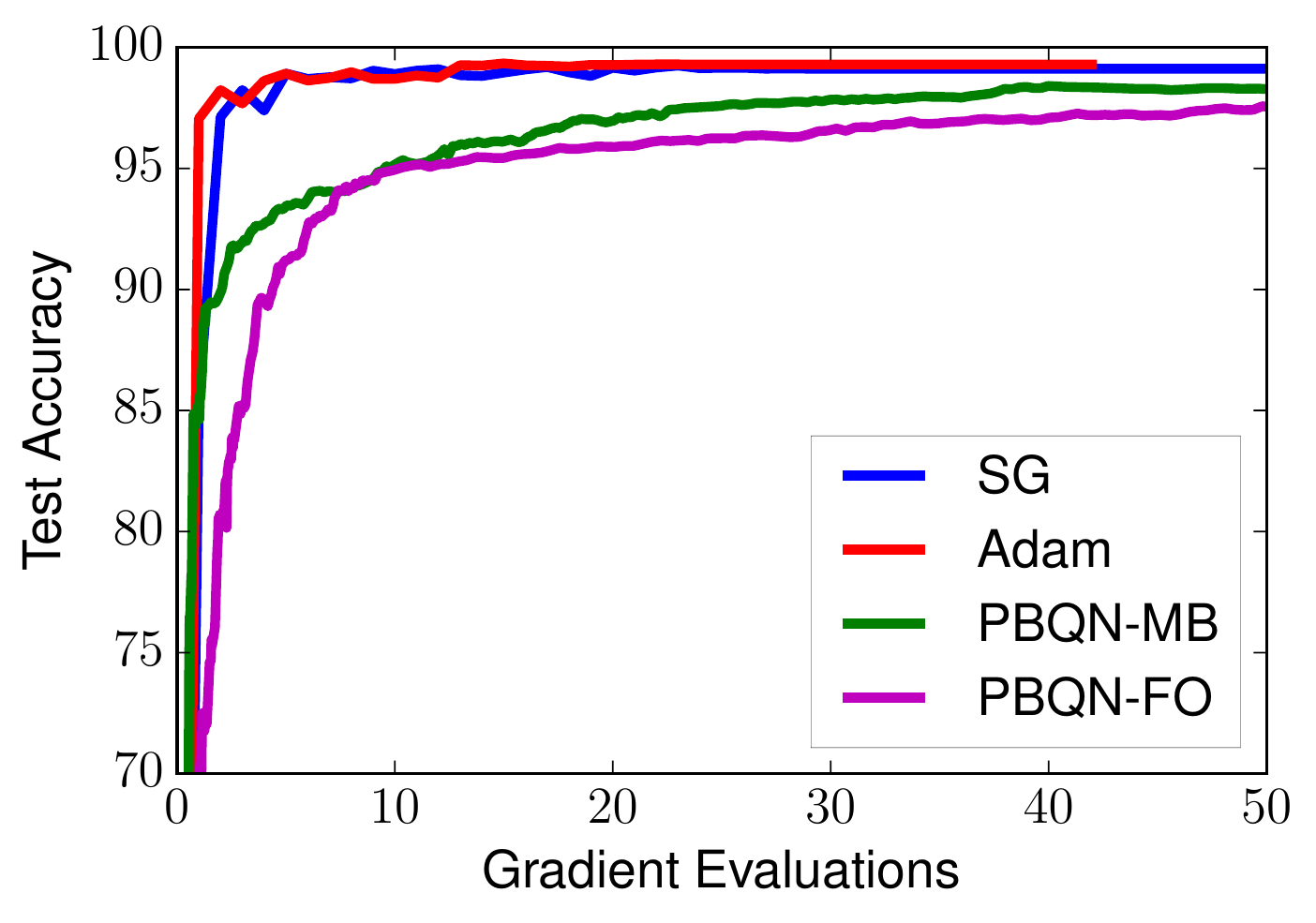}
\includegraphics[width=0.33\linewidth]{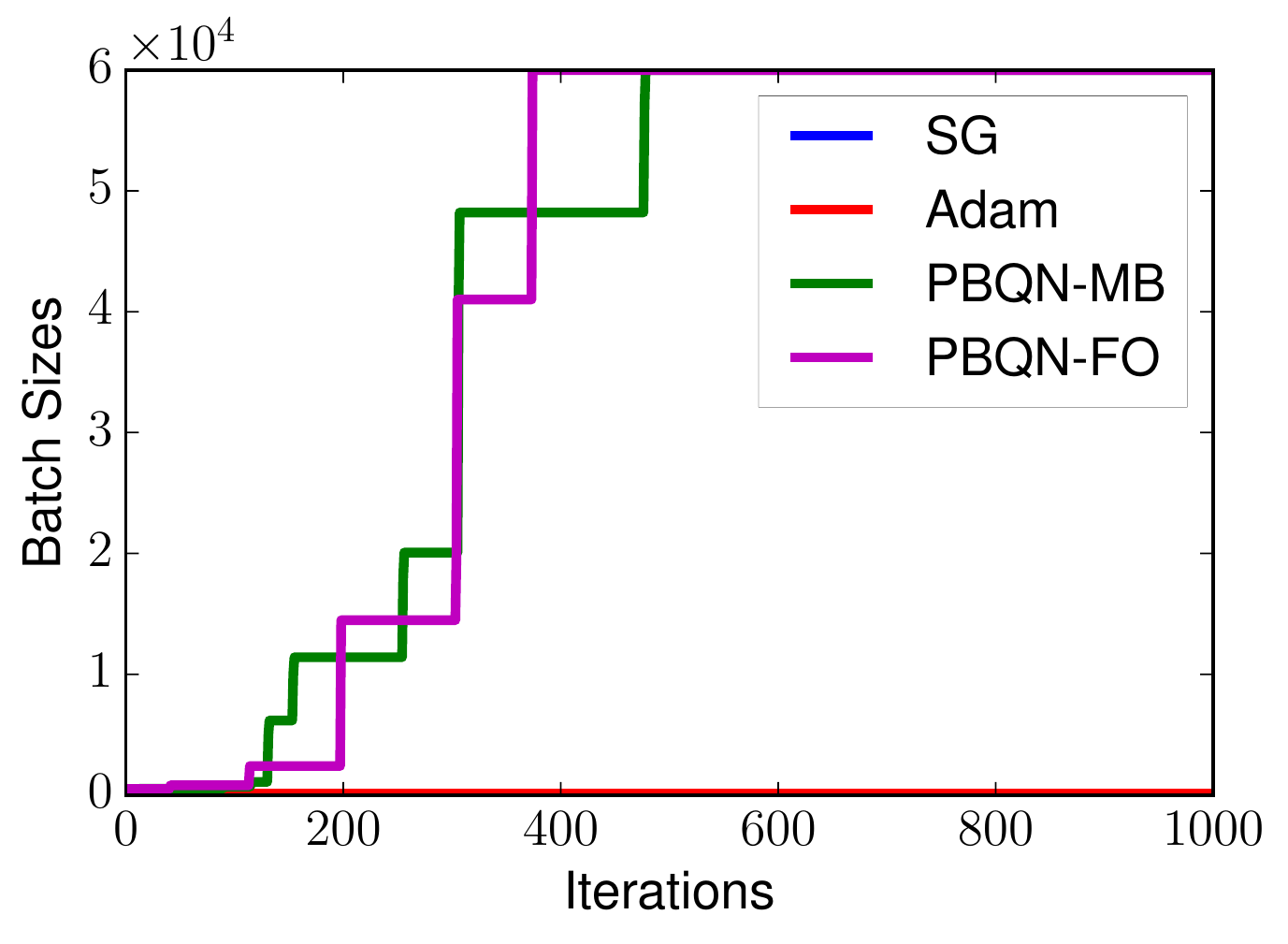}
\includegraphics[width=0.33\linewidth]{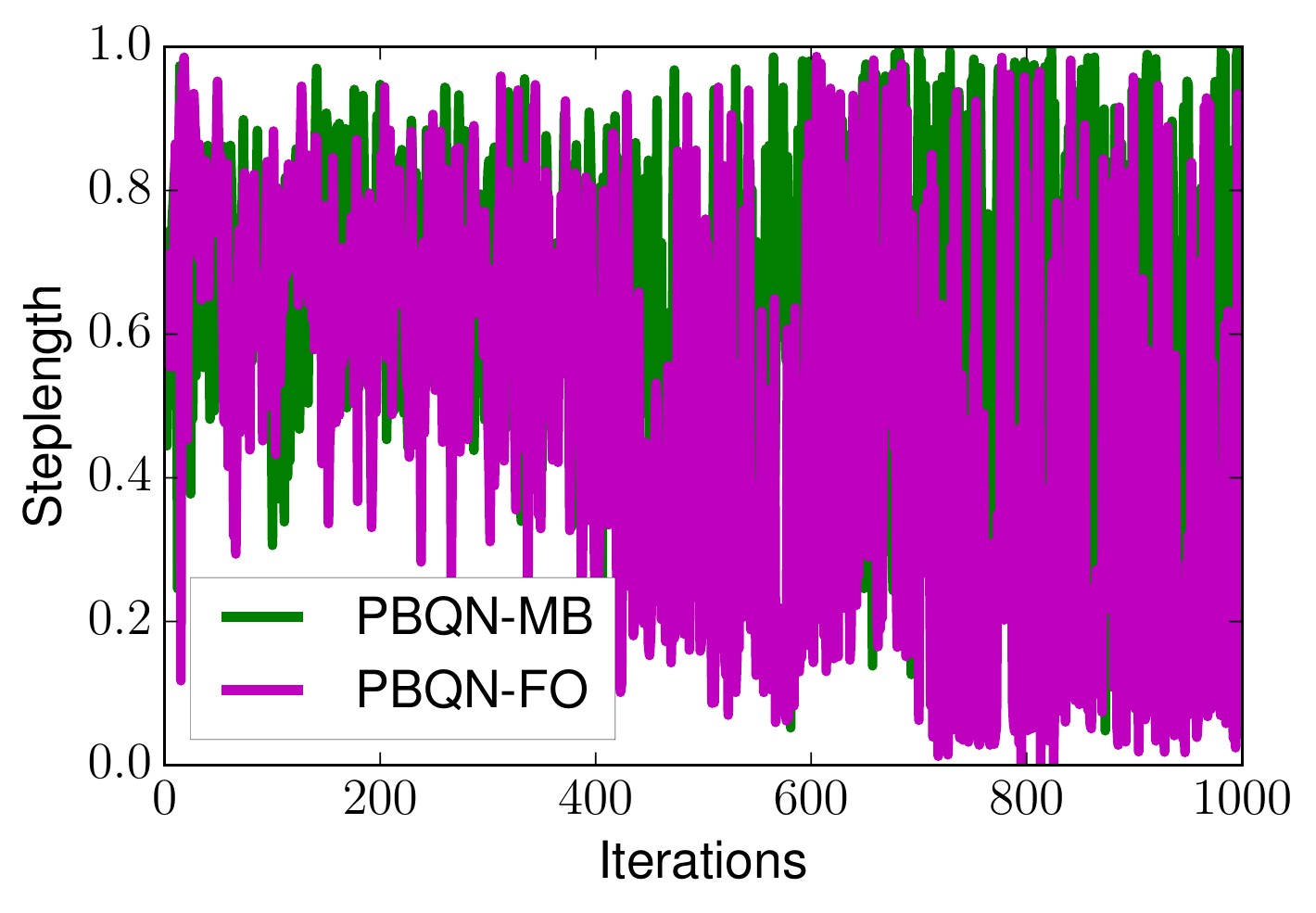}
\caption{\textbf{MNIST AlexNet $(\mathcal{A}_1)$:} Performance of the progressive batching L-BFGS methods, with multi-batch (MB) (25\% overlap) and full-overlap (FO) approaches, and the SG and Adam methods. The best results for L-BFGS are achieved with $\theta = 2$.}
\label{exp:mnist_alexnet}
\end{centering}
\end{figure*}

\begin{figure*}
\begin{centering}
\includegraphics[width=0.33\linewidth]{cifar10_alexnet_iters_train_loss_theta_0.9.pdf}
\includegraphics[width=0.33\linewidth]{cifar10_alexnet_iters_test_loss_theta_0.9.pdf}
\includegraphics[width=0.33\linewidth]{cifar10_alexnet_iters_test_acc_theta_0.9.pdf}
\includegraphics[width=0.33\linewidth]{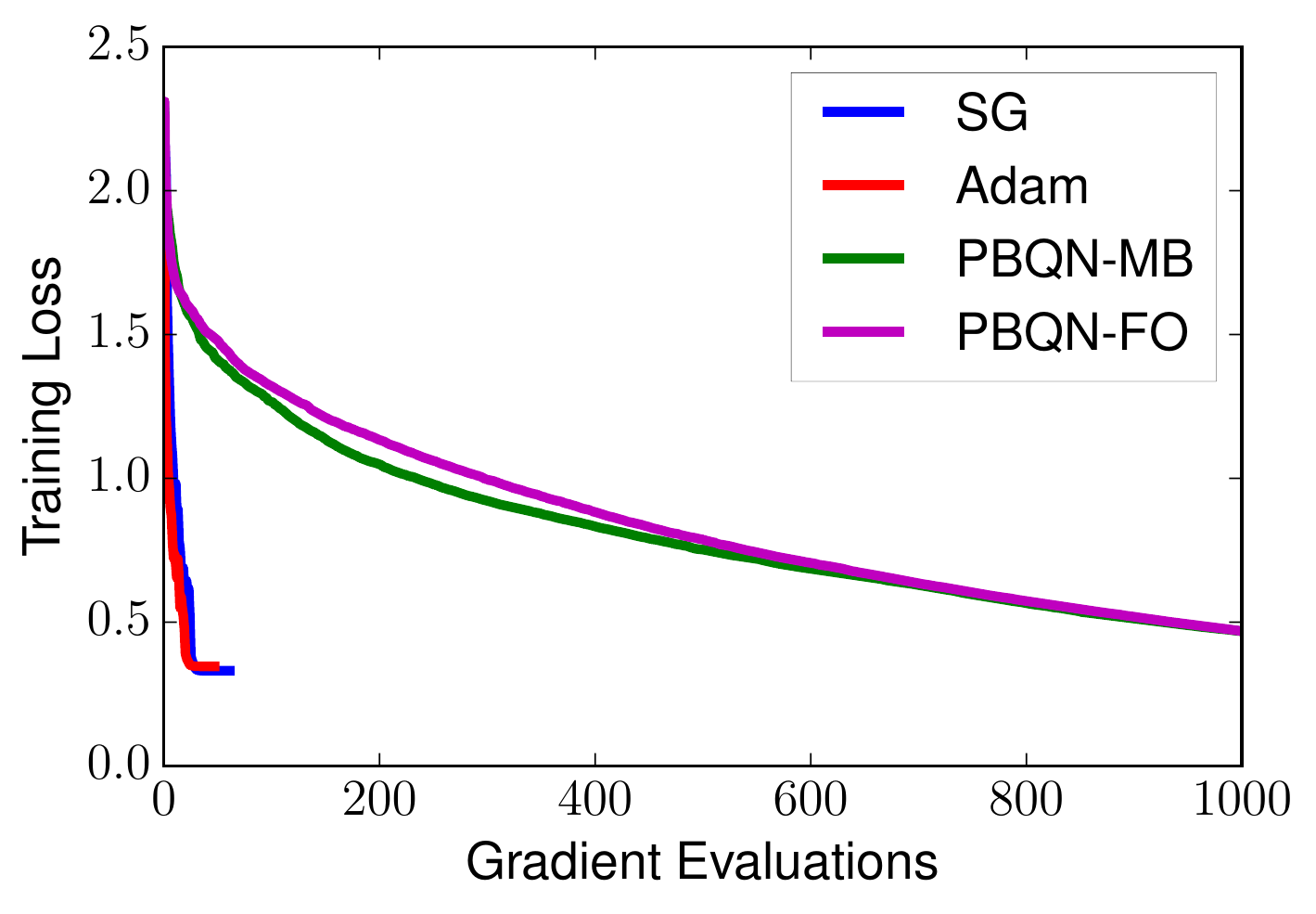}
\includegraphics[width=0.33\linewidth]{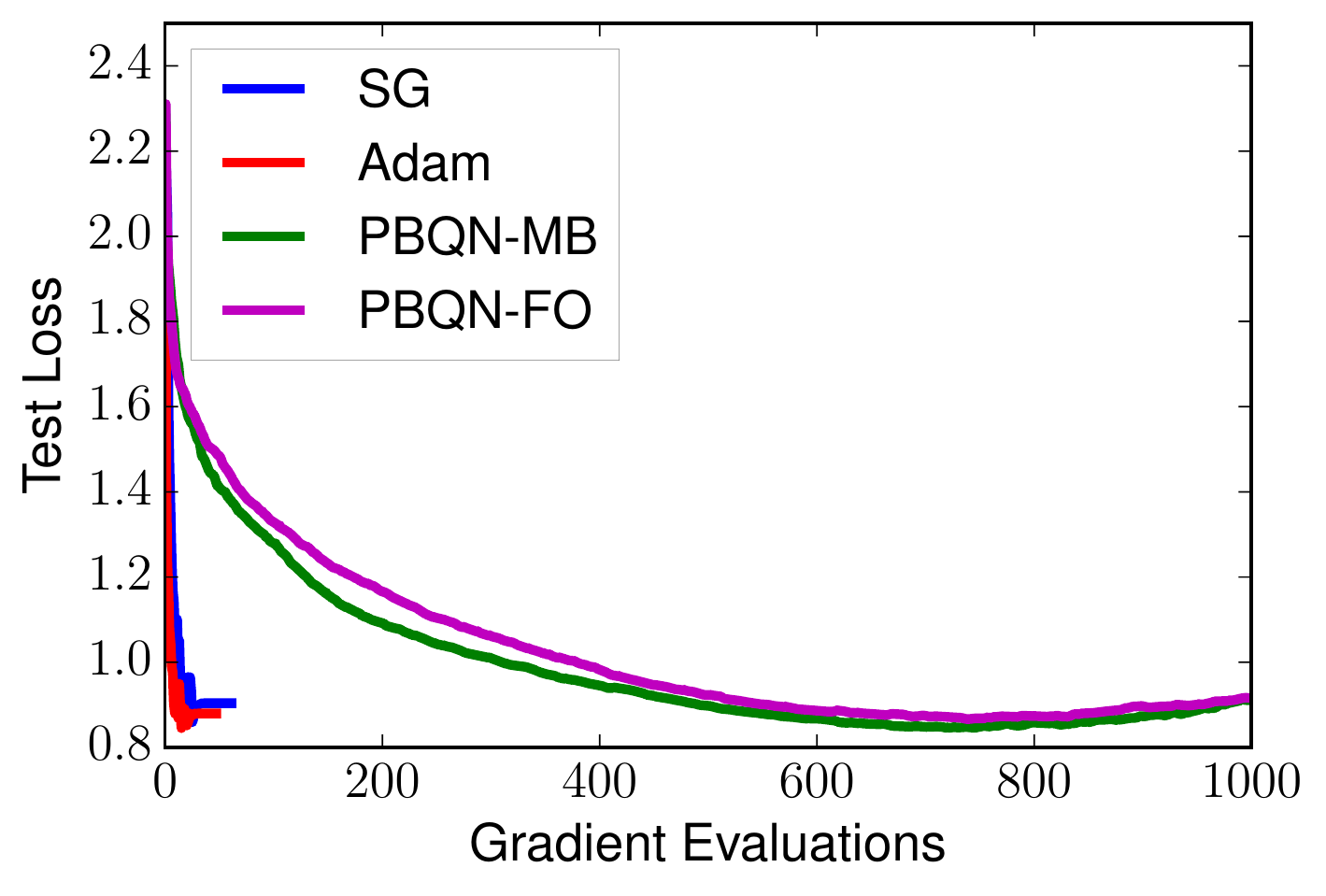}
\includegraphics[width=0.33\linewidth]{cifar10_alexnet_props_test_acc_theta_0.9.pdf}
\includegraphics[width=0.33\linewidth]{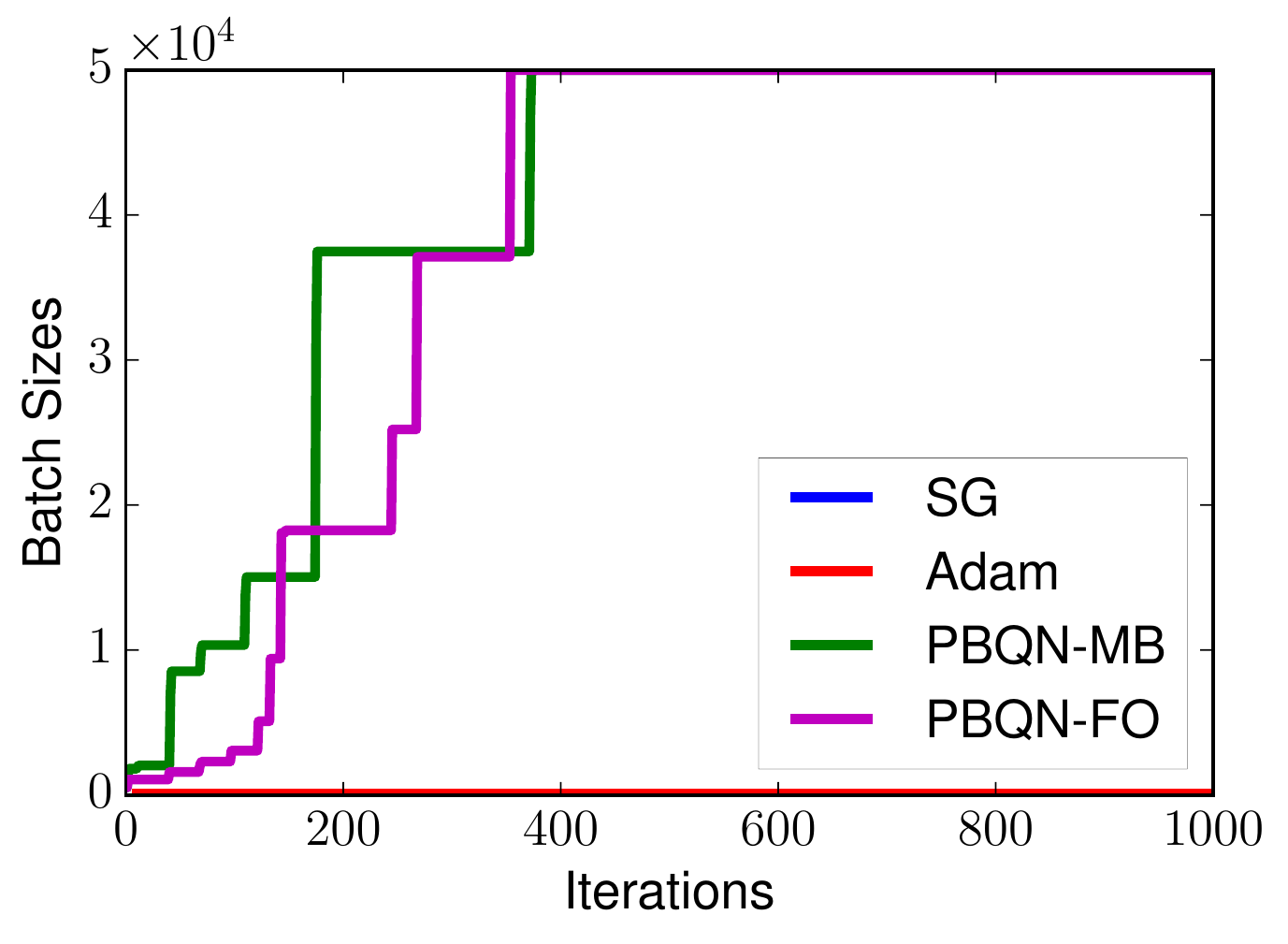}
\includegraphics[width=0.33\linewidth]{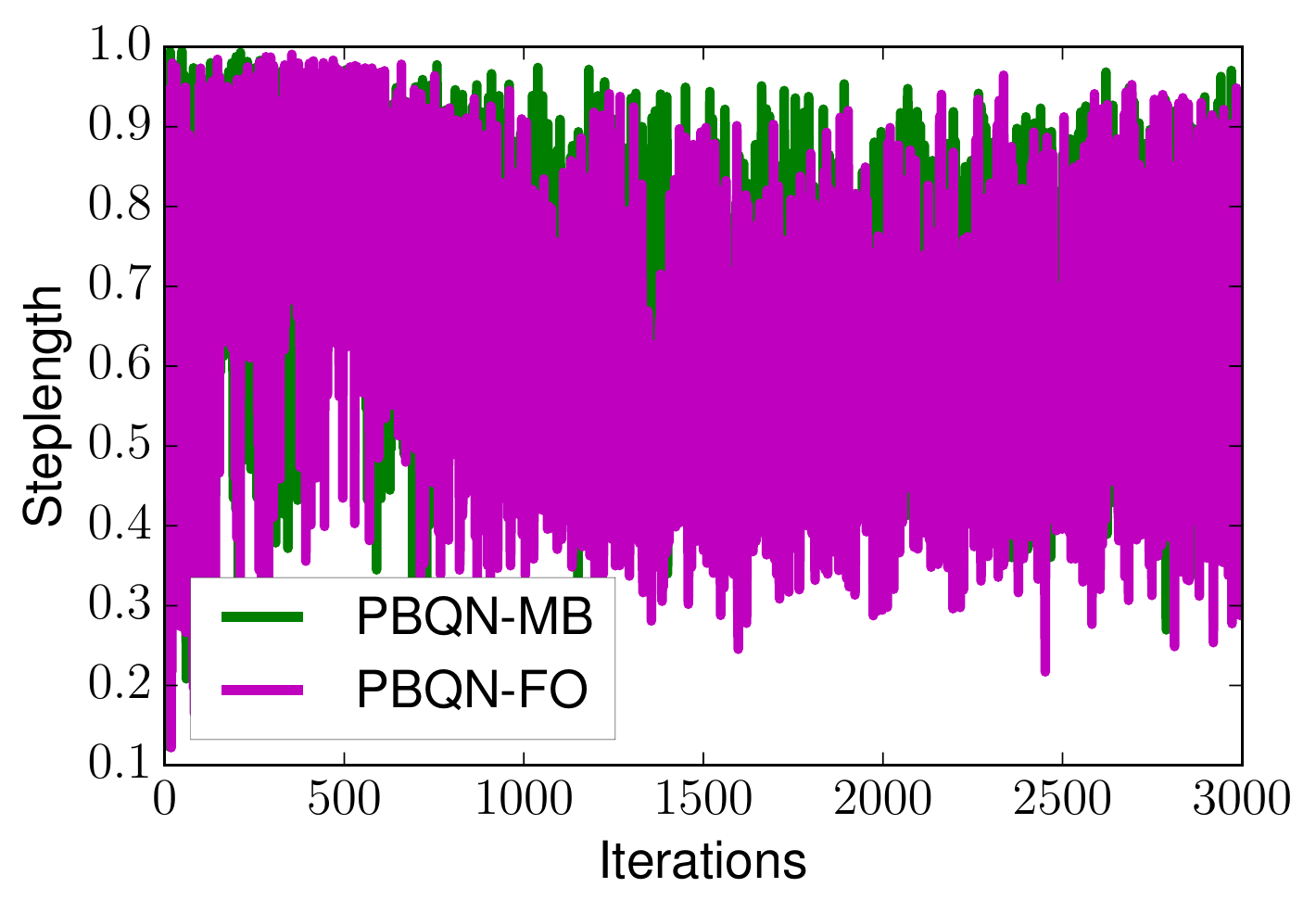}
\caption{\textbf{CIFAR-10 AlexNet $(\mathcal{A}_2)$:} Performance of the progressive batching L-BFGS methods, with multi-batch (MB) (25\% overlap) and full-overlap (FO) approaches, and the SG and Adam methods. The best results for L-BFGS are achieved with $\theta = 0.9$.}
\label{exp:cifar10_alexnet}
\end{centering}
\end{figure*}

\begin{figure*}
\begin{centering}
\includegraphics[width=0.33\linewidth]{cifar10_resnet18_iters_train_loss_theta_2.pdf}
\includegraphics[width=0.33\linewidth]{cifar10_resnet18_iters_test_loss_theta_2.pdf}
\includegraphics[width=0.33\linewidth]{cifar10_resnet18_iters_test_acc_theta_2.pdf}
\includegraphics[width=0.33\linewidth]{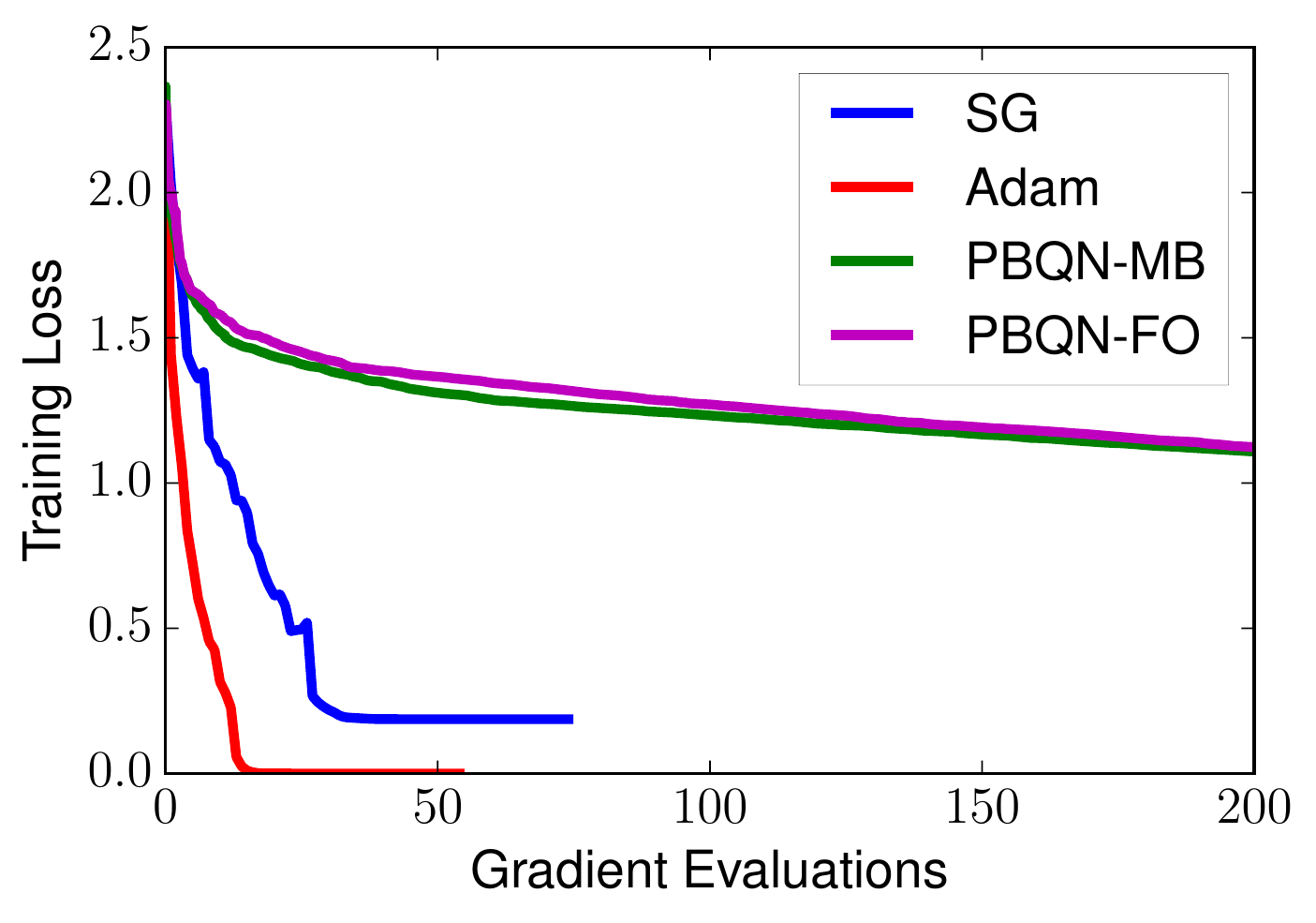}
\includegraphics[width=0.33\linewidth]{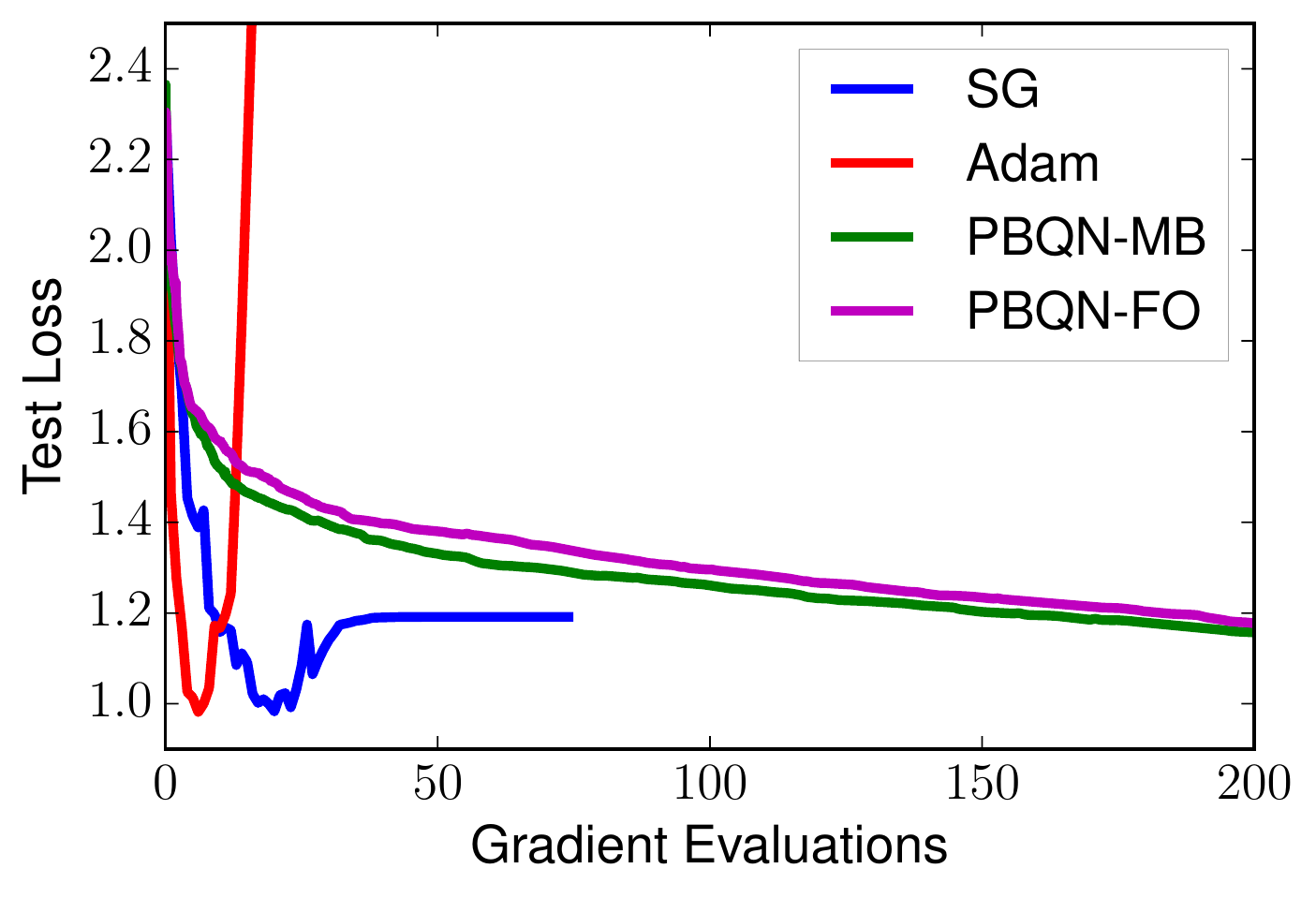}
\includegraphics[width=0.33\linewidth]{cifar10_resnet18_props_test_acc_theta_2.pdf}
\includegraphics[width=0.33\linewidth]{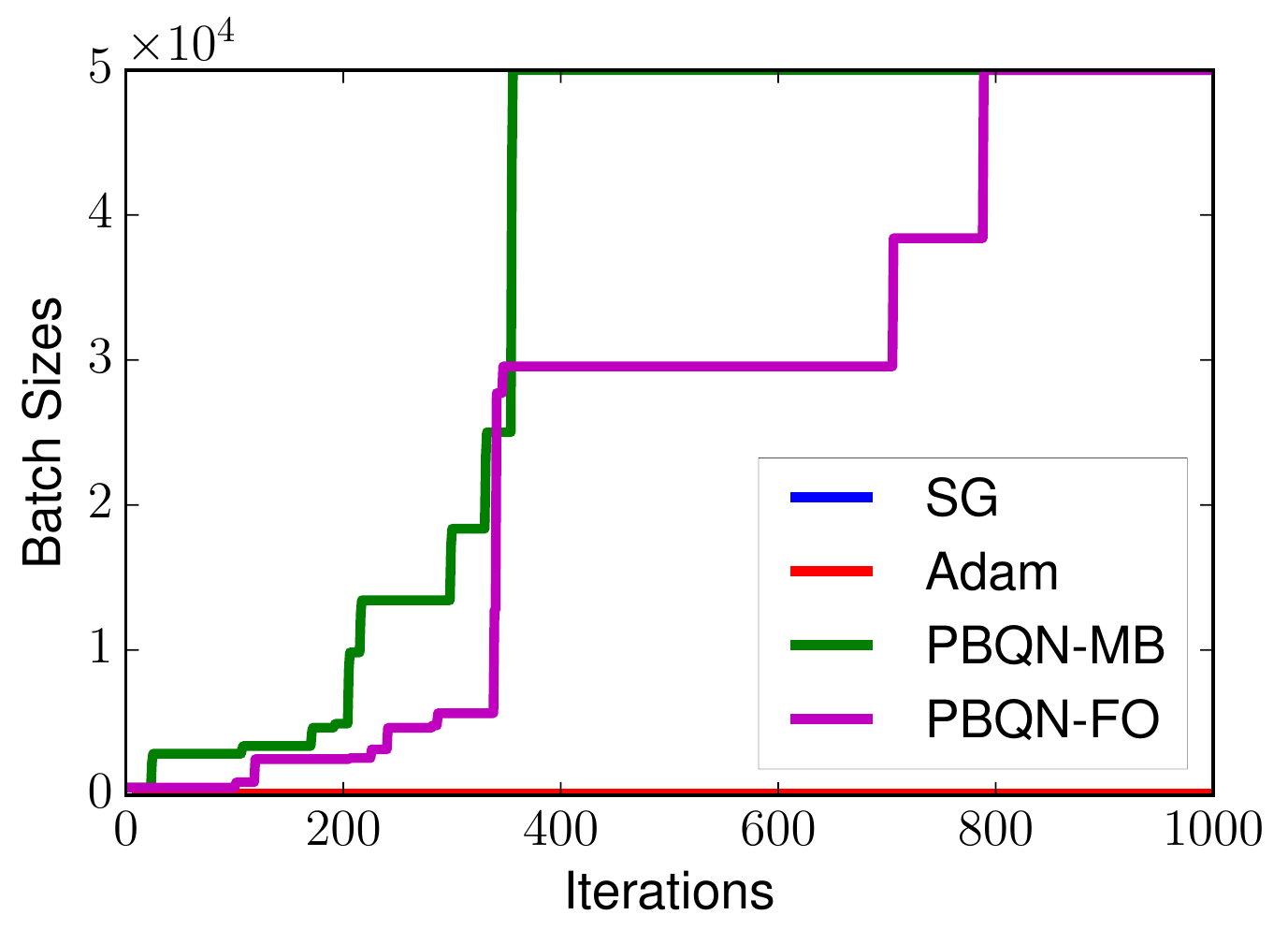}
\includegraphics[width=0.33\linewidth]{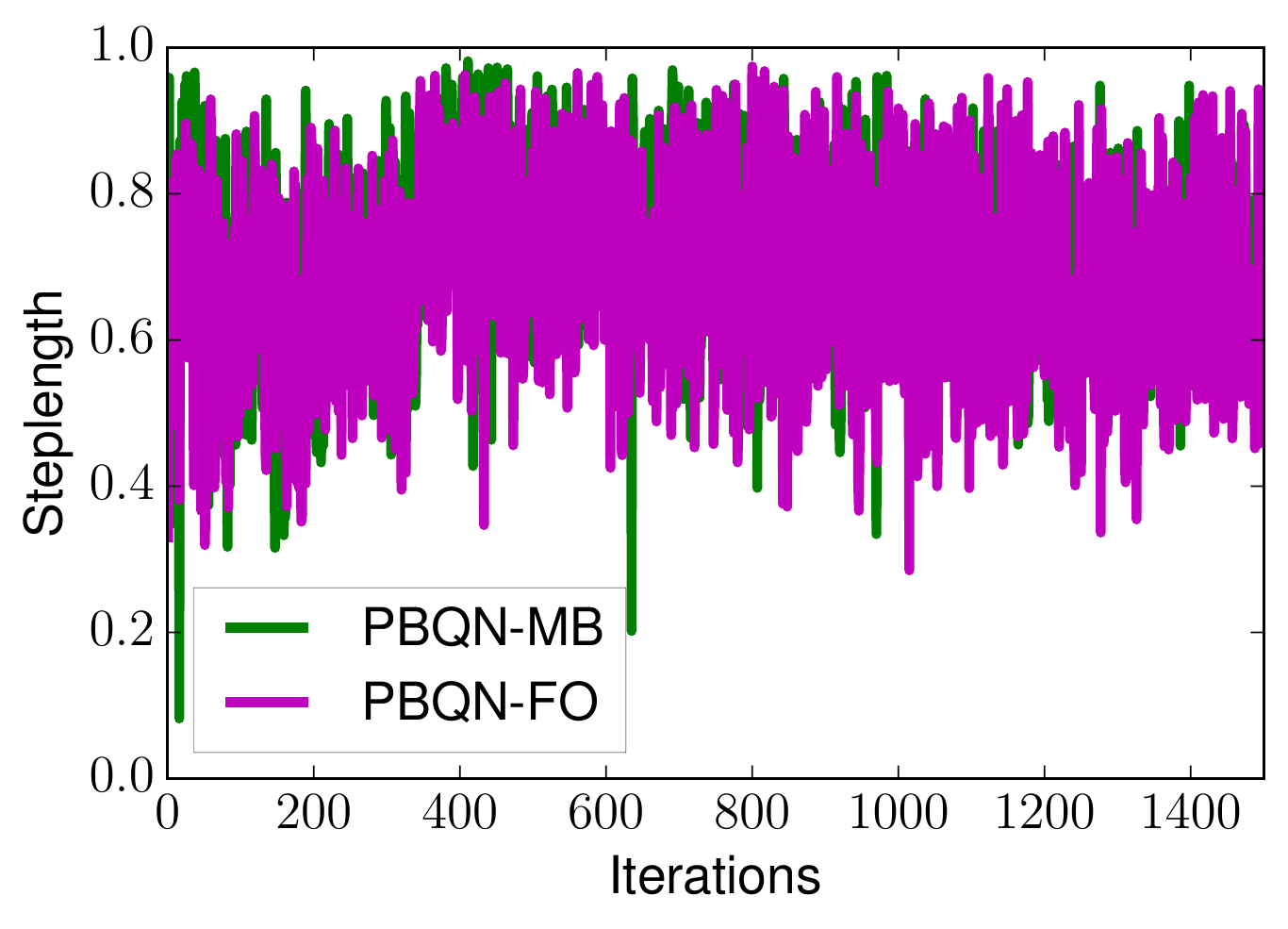}
\caption{\textbf{CIFAR-10 ResNet18 $(\mathcal{R})$:} Performance of the progressive batching L-BFGS methods, with multi-batch (MB) (25\% overlap) and full-overlap (FO) approaches, and the SG and Adam methods. The best results for L-BFGS are achieved with $\theta = 2$.}
\label{exp:cifar10_resnet18}
\end{centering}
\end{figure*}

	\newpage
	\section{Performance Model}
\label{sec:performance}

The use of increasing batch sizes in the PBQN algorithm yields a larger effective batch size than the SG method, allowing PBQN to scale to a larger number of nodes than currently permissible even with large-batch training \cite{goyal2017accurate}. With improved scalability and richer gradient information, we expect reduction in training time. To demonstrate the potential to reduce training time of a parallelized implementation of PBQN, we extend the idealized performance model from \cite{keskar2016large} to the PBQN algorithm. For PBQN to be competitive, it must achieve the following: (i) the quality of its solution should match or improve SG's solution (as shown in Table 1 of the main paper); (ii) it should utilize a larger effective batch size; and (iii) it  should converge to the solution in a lower number of iterations. We provide an initial analysis for this by establishing the analytic requirements for improved training time; we leave discussion on implementation details, memory requirements, and large-scale experiments for future work.

Let the effective batch size for PBQN and conventional SG batch size be denoted as $\widehat{B_L}$ and $B_S$, respectively. From Algorithm \ref{alg: pb_L-BFGS}, we observe that the PBQN iteration involves extra computation in addition to the gradient computation as in SG. The additional steps are as follows: the L-BFGS two-loop recursion, which includes several operations over the stored curvature pairs and network parameters (Algorithm \ref{alg: pb_L-BFGS}:6); the stochastic line search for identifying the steplength (Algorithm \ref{alg: pb_L-BFGS}:7-16); and curvature pair updating (Algorithm \ref{alg: pb_L-BFGS}:18-21). However, most of these supplemental operations are performed on the weights of the network, which is orders of magnitude lower than computing the gradient. The two-loop recursion performs $O(10)$ operations over the network parameters and curvature pairs. The cost for variance estimation is negligible since we may use a fixed number of samples throughout the run for its computation which can be parallelized while avoiding becoming a serial bottleneck. 

The only exception is the stochastic line search, which requires additional forward propagations over the model for different sets of network parameters. However, this happens only when the step-length is not accepted, which happens infrequently in practice. We make the pessimistic assumption of an addition forward propagation every iteration, amounting to an additional $\frac{1}{3}$ the cost of the gradient computation (forward propagation, back propagation with respect to activations and weights). Hence, the ratio of cost-per-iteration for PBQN $C_L$ to SG's cost-per-iteration $C_S$ is $\frac{4}{3}$. Let $I_S$ and $I_L$ be the number of iterations that it takes SG and PBQN, respectively, to reach similar test accuracy. The target number of nodes to be used for training is $N$, such that $N < \widehat{B_L}$. For $N$ nodes, the parallel efficiency of SG is assumed to be $P_e(N)$ and we assume that for the target node count, there is no drop in parallel efficiency for PBQN due to the large effective batch size. 

For a lower training time with the PBQN method, the following relation should hold:
\begin{equation}
\label{eqn:perfModel1}
I_L C_L \frac{\widehat{B_L}}{N} < I_S C_S \frac{B_S}{N P_e(N)}.
\end{equation}
In terms of iterations, we can rewrite this as
\begin{equation}
\label{eqn:perfModel2}
\frac{I_L}{I_S} < \frac{C_S}{C_L} \frac{B_S}{\widehat{B_L}} \frac{1}{P_e(N)}.
\end{equation}
Assuming target node count $N = B_S < \hat{B_L}$, the scaling efficiency of SG drops significantly due to the reduced work per single node, giving a parallel efficiency of $P_e(N) = 0.2$; see \cite{kurth2017deep,you2017scaling}. If we additionally assume that effective batch size for PBQN is $4\times$ larger, with SG large batch $\approx$ 8K and PBQN $\approx$ 32K as observed in our experiments (from Section \ref{sec:parallel}), this gives $\nicefrac{\widehat{B_L}}{B_S} = 4$. PBQN must converge with about the same number of iterations as SG in order to achieve lower training time. From Section \ref{sec:parallel}, the results show that PBQN converges in significantly fewer iterations than SG, hence establishing the  potential for lower training times. We refer the reader to \cite{das2016distributed} for a more detailed model and commentary on the effect of batch size on performance.

	
	
	
\end{document}